\newtheorem{theorem}{Theorem}[section]
\newtheorem{lemma}[theorem]{Lemma}
\newtheorem{remark}[theorem]{Remark}
\newtheorem{definition}[theorem]{Definition}
\newtheorem{proposition}[theorem]{Proposition}
\newtheorem{corollary}[theorem]{Corollary}
\newtheorem{problem}[theorem]{Problem}
\newtheorem{example}[theorem]{Example}
\newtheorem{formulla}[theorem]{}
\long\def\symbolfootnote[#1]#2{\begingroup%
\def\thefootnote{\fnsymbol{footnote}}\footnote[#1]{#2}\endgroup}
\newcommand\Z{{\mathbb Z}}
\newcommand\R{{\mathbb{R}}}
\begin{document}
\title{Knots and distributive homology:\\ \footnotesize {from arc colorings to Yang-Baxter 
homology}}\footnote{This paper 
has its roots in two series of talks I gave:
in Russia (Lomonosov Moscow State University, May 29-June 1, 2012), where the visualization in Figure 6.2 
was observed,  Korea (TAPU Workshop on Knot Theory, 
July 23-27, 2012), and in a talk at Oberwolfach Conference (June 3-9, 2012). The short version of this paper was 
published in Oberwolfach Proceedings \cite{Prz-6}. While I keep novelty of the talks (many new ideas were crystallized 
then), I added a lot of supporting material so the paper is mostly self-sufficient. I also kept, to some extent, 
the structure of talks; it may lead to some repetitions but I hope it is useful for a reader.}
 \author{J\'ozef H. Przytycki} 

\maketitle
\markboth{\hfil{\sc Knots and distributive homology}\hfil}
\ \
\tableofcontents
\section{Introduction}
While homology theory of associative structures, such as groups and rings,
has been extensively studied in the past beginning with the work of Hurewicz, Hopf, Eilenberg, and
Hochschild, the non-associative structures, such as racks or quandles, were neglected until
recently. The distributive structures\footnote{The word {\it distributivity} was coined in 1814 
by French mathematician Francois Servois (1767-1847).} have been studied for a long time and already C.S.
Peirce (1839-1914) in 1880 \cite{Peir} emphasized the importance of (right) self-distributivity in algebraic
structures, and his friend E.~Schr\"oder, \cite{Schr} gave an example of a three element magma $(X;*)$ 
which is not associative\footnote{The example Schr\"oder (1841-1902) gave is 
\begin{center}
\begin{tabular}{c|ccc}
$*$&0&1&2\\ \hline
0&0&2&1\\
1&2&1&0\\
2&1&0&2\\
\end{tabular}
\end{center}
and we named elements of the magma by $0,1$ and $2$ as this example is the base for Fox three colorings 
of links (developed about 1956) \cite{C-F,Cro,Prz-3}, and the operation can be written as $x*y=2y-x$ modulo $3$; 
it happen to be self-distributive from both sides, that is $(x*y)*z= (x*z)*(y*z)$ and $x*(y*z)= (x*y)*(x*z)$ 
(see Example 2.3(5)).} (compare Section 2).

 However, homology for such universal algebras was introduced only between 1990 and 1995
by Fenn, Rourke, and Sanderson \cite{FRS-1,FRS-2,FRS-3,Fenn} .  We develop theory in the historical
context and propose a general framework to study homology of distributive structures.
We outline potential relations to Khovanov homology and categorification, via Yang-Baxter operators.
We use here the fact that Yang-Baxter equation can be thought of as a generalization of self-distributivity.

\subsection{Invariants of arc colorings}
Consider a link diagram $D$, say {\parbox{1.1cm}{\psfig{figure=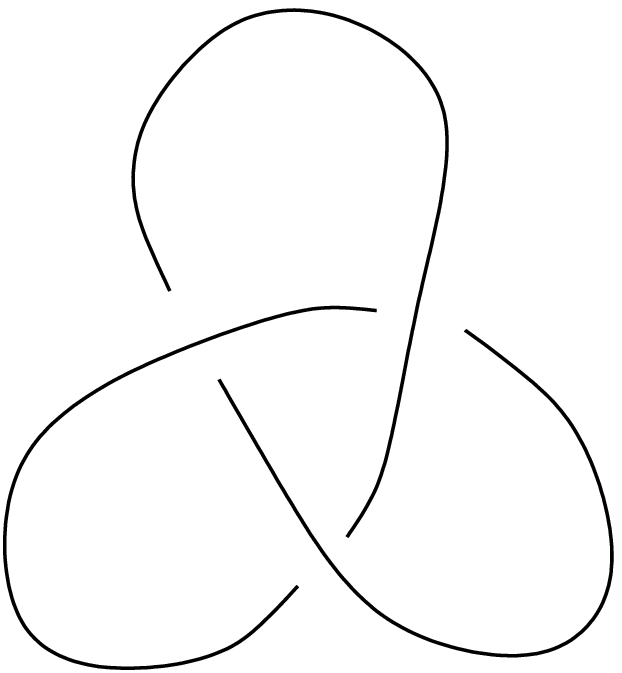,height=0.6cm}}, 
and a finite set $X$. We may define a diagram invariant to be the number of colorings of arcs\footnote{We use the 
term {\it arc} for a part of the diagram from an undercrossing to the next undercrossing (including possibility 
of a trivial component), and the term {\it semi-arc} for a part of the diagram from a crossing to the next  
crossing. Thus, for example, a standard trefoil knot diagram has three arcs and six semi-arcs.}
 of $D$ by elements of $X$,
$col_X(D)$. Even such a naive definition leads to a link invariant $col_X(L)= min_{D\in L} col_X(D)$, where 
$D\in L$ means that $D$ is a diagram of $L$.\footnote{One can say that it is nonsense but an invariant 
is nontrivial: $col_X(L)= |X|^{cr(L)+t(L)}$, where $cr(L)$ is the crossing number of $L$ and $t(L)$ the number 
of trivial components in $L$. This is the case as for a knot diagram $D$ with at least one crossing the 
number of arcs equals to the number of crossings.} 
More sensible approach would start with a magma $(X;*)$, that is a set with binary 
operation, and with the coloring convention of Figure 1.1.
\ \\
\centerline{{\psfig{figure=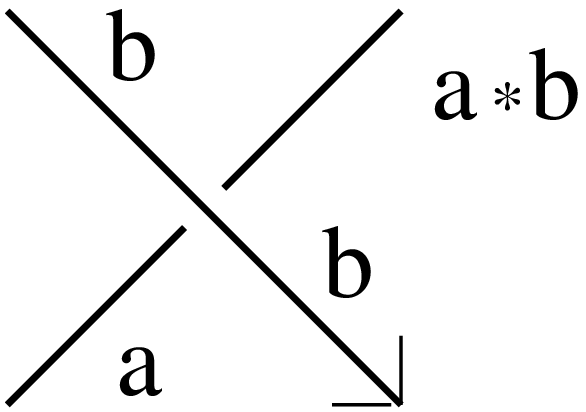,height=1.9cm}}}\ \\
\centerline{Figure 1.1; convention for a magma coloring of a crossing} \ \\
 Again, let for a finite $X$, $col_{(X;*)}(D)$ denote 
the number of colorings of arcs of $D$ by elements of $X$, according to the convention given in Figure 1.1,
 at every crossing.
We can define an oriented link invariant by considering $col_{(X;*)}(L)= min_{D\in L} col_{(X;*)}(D)$; Alternatively, 
we can minimize $col_{(X;*)}(L)$ over minimal crossing diagrams of $L$ only. Such an invariant would be very difficult to 
compute so it is better to look for properties of $(X;*)$ so that $col_{(X;*)}(D)$ is invariant under Reidemeister 
moves: $R_1$ ({\parbox{0.7cm}{\psfig{figure=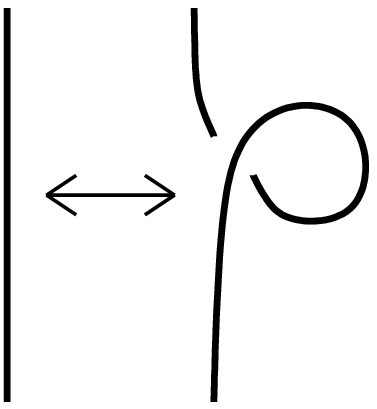,height=0.7cm}}) gives idempotent 
relation $a*a=a$.\footnote{The names {\it idempotent} (same power) and nilpotent (zero power) were  
introduced in 1870 by Benjamin Peirce \cite{Pei}, page 20.}
$R_2$ ({\parbox{0.9cm}{\psfig{figure=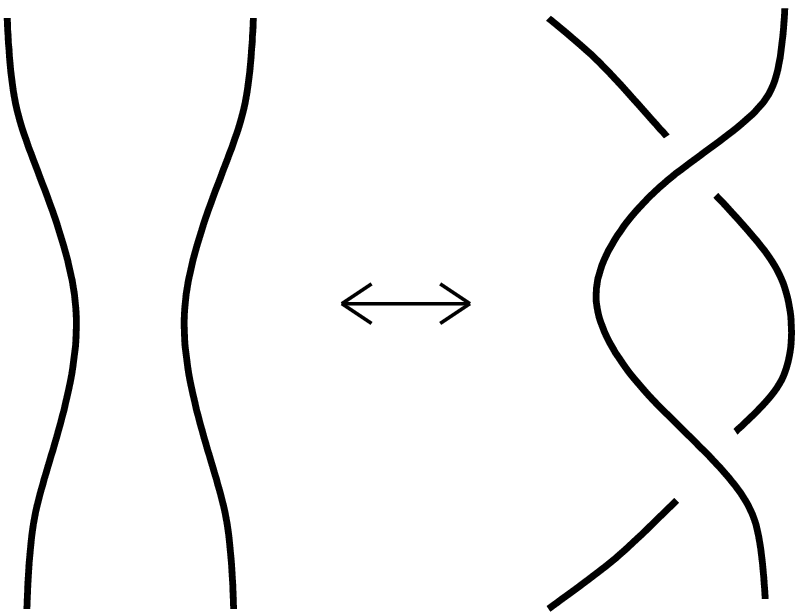,height=0.7cm}}) forces $*$ to be an invertible operation, and the third 
move illustrated in detail in Figure 1.2  below, forces on $*$ a right self-distributivity $(a*b)*c=(a*c)*(b*c)$.\\ \ \\
\centerline{\psfig{figure=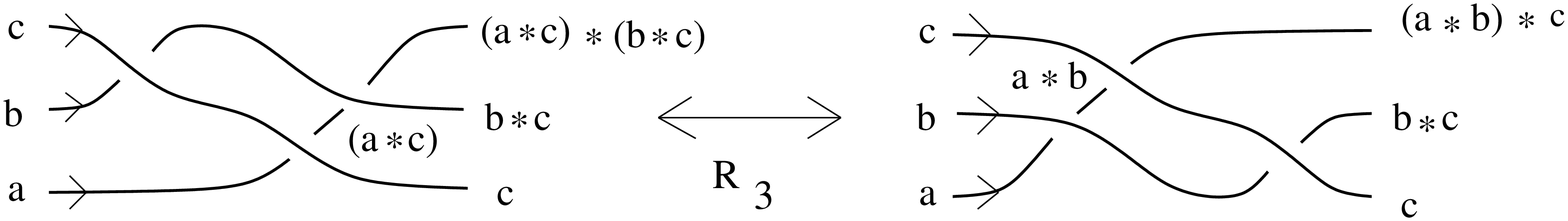,height=1.8cm}}\\ \ \\
\centerline{Figure 1.2; magma coloring of a third Reidemeister move and distributivity}
\ \ \\

The magma $(X;*)$ satisfying all three conditions is called a quandle, the last two -- a rack, and only 
the last condition -- a shelf or RDS (right distributive system). 
Thus, if $(X;*)$ is a quandle then $col_{(X;*)}(D)$ (which we denote from now on succinctly $col_X(D)$)
 is a link invariant. We also use the notation $Col_X(D)$ for the set of $X$-colorings of $D$, thus
$col_X(D)= |Col_X(D)|$.
 We also can try more generally to color semi-arcs of a diagram by elements of $X$ and 
declare for each colored crossing a weight of the crossing. This approach leads to state sums and 
Yang-Baxter operators (see Section \ref{Section 6} and Figures 6.2, 12.1, and 12.3).

We also can do more with distributive magmas (after S.Carter, S.Kamada, and M.Saito \cite{CKS}; compare also 
M.Greene thesis \cite{Gr}). 
We can sum over all crossings the pairs $\pm (a,b)$ according to the convention
{\parbox{3.3cm}{\psfig{figure=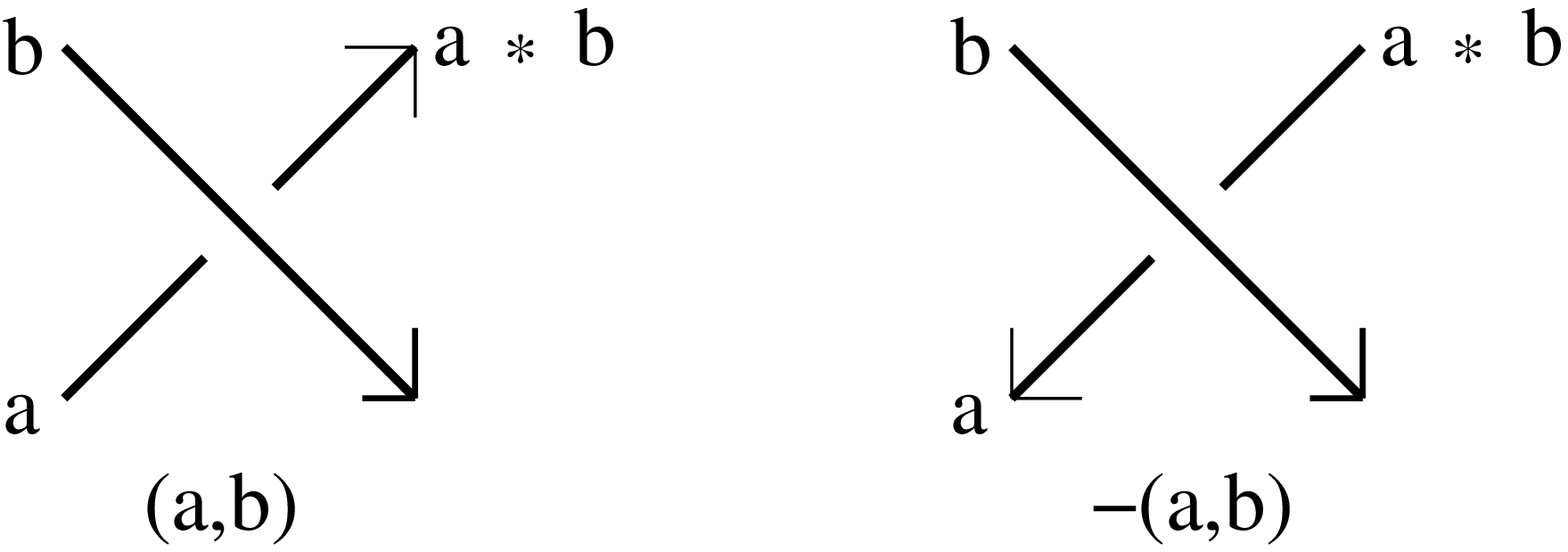,height=1.3cm}}; the investigation of invariance of $\sum \pm (a,b)$ 
under Reidemeister moves was a hint toward construction of (co)homology of quandles.

We also encounter right distributivity by asking the following question: for a given coloring $\phi \in Col_X(D)$ by a 
magma $(X;*)$, and an element $x\in X$ is coloring $\phi * x$ also a magma coloring?  
If, as before, coloring is given by $a$, $b$,
and $c=a*b$ then the new coloring of a crossing is given by $a*x$, $b*x$, and $c*x$. For a magma coloring we need 
$(a*b)*x = (a*x)*(b*x)$ which is exactly right self-distributivity. 
To put it on a more solid footing, we observe that the map $*_x:X \to X$ with $*_x(a)=a*x$ is a magma 
homomorphism: $*_x(a*b)=(a*b)*x = (a*x)*(b*x)= *_x(a)**_x(b)$. For any magma homomorphism $g: X \to X$ if 
$f: arcs(D) \to X$ is a magma coloring then $gf$ defined by $(gf)(arc)=g(f(arc))$ is a magma coloring of $D$. 
These are classical observations thus it is 
interesting to notice the slightly more general fact concerning the following question:

Let $(X;*_1)$ be a magma and $f$ and $g$ two $(X;*_1)$ magma colorings of a diagram $D$. Find the magma 
operation $*_2$ so that $f*_2g$ is also a $(X;*_1)$ magma coloring. The question reduces to the previous one if 
$g$ is a trivial (i.e. constant)
 coloring and $*_1=*_2$. The nice condition which answers the question was first discussed by 
M.Niebrzydowski at his talk at Knots in Washington XXXV conference in December of 2012 \cite{Nieb} 
(compare also \cite{C-N}).

\begin{lemma}\label{Lemma 1.1} Let  $f, g: arcs(D) \to X$  be two colorings of a diagram $D$ by $(X;*_1)$ 
(that is $f,g\in Col_{(X;*_1)}(D)$). Then $f*_2g$ where $*_2$ is another binary operation on $X$ and 
$(f*_2g)(arc)=f(arc)*_2g(arc)$, is an $(X;*_1)$ coloring of $X$ if operations
 $*_1$, and $*_2$ are entropic one with respect to the other, that is:
$(a*_1b)*_2(c*_1d) = (a*_2c)*_1(b*_2d)$.

\end{lemma}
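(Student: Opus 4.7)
The plan is a purely local verification at a single crossing: since a coloring of a diagram is a labeling of arcs satisfying the magma relation at each crossing, it suffices to check that the entropic identity is exactly what is needed to make $f*_2 g$ respect the relation of Figure 1.1 at every crossing.

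First I would fix an arbitrary crossing of $D$ and label its incoming arcs so that, under $f$, the under-arc coming in is labeled $a$, the over-arc is labeled $b$, and the under-arc going out is labeled $a*_1 b$ (since $f$ is an $(X;*_1)$-coloring). Analogously, under $g$ the three arcs at the same crossing are labeled $c$, $d$, and $c*_1 d$. Applying the pointwise definition of $f*_2 g$, the two incoming arcs acquire colors $a*_2 c$ and $b*_2 d$, while the outgoing under-arc acquires color $(a*_1 b)*_2 (c*_1 d)$.

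Next I would write down the condition that $f*_2 g$ be an $(X;*_1)$-coloring at this crossing: by the convention of Figure 1.1 the outgoing under-arc must equal the $*_1$-product of the incoming under-arc color with the over-arc color, i.e.
\[
(f*_2 g)(\text{out}) \;=\; (f*_2 g)(\text{under in}) \,*_1\, (f*_2 g)(\text{over}),
\]
which in our labels becomes
\[
(a*_1 b)*_2 (c*_1 d) \;=\; (a*_2 c)*_1 (b*_2 d).
\]
This is precisely the entropic identity assumed in the hypothesis, so the relation holds. Since the crossing was arbitrary and the verification uses only the colors locally at that crossing, $f*_2 g$ is a valid $(X;*_1)$-coloring of $D$.

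There is essentially no obstacle beyond careful bookkeeping; the only subtlety is to make sure the two coloring conventions (for $f$ and for $g$) are read off the same crossing with the same assignment of roles to the three arcs, so that the entropic identity is applied with the correct pairing of variables. Once the labels are matched up, the lemma reduces to a one-line identity.
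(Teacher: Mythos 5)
Your proof is correct and follows the same route as the paper: a local check at an arbitrary crossing showing that the coloring condition for $f*_2g$ is literally the entropic identity $(a*_1b)*_2(c*_1d)=(a*_2c)*_1(b*_2d)$ applied to the colors $f(\text{under}),f(\text{over}),g(\text{under}),g(\text{over})$. No gaps; the bookkeeping matches the paper's one-line verification.
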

\begin{proof}
 For every crossing with initial under-arc $a$ and over-arc $b$ we need
$$(f(a)*_2g(a))*_1(f(b)*_2g(b)) = (f(a)*_1f(b))*_2(g(a)*_1g(b))$$
which is exactly the entropic condition in Figure 1.3 (compare Subsection \ref{Subsection 8.2}).
\end{proof}

\ \\ \ \\
\centerline{\psfig{figure=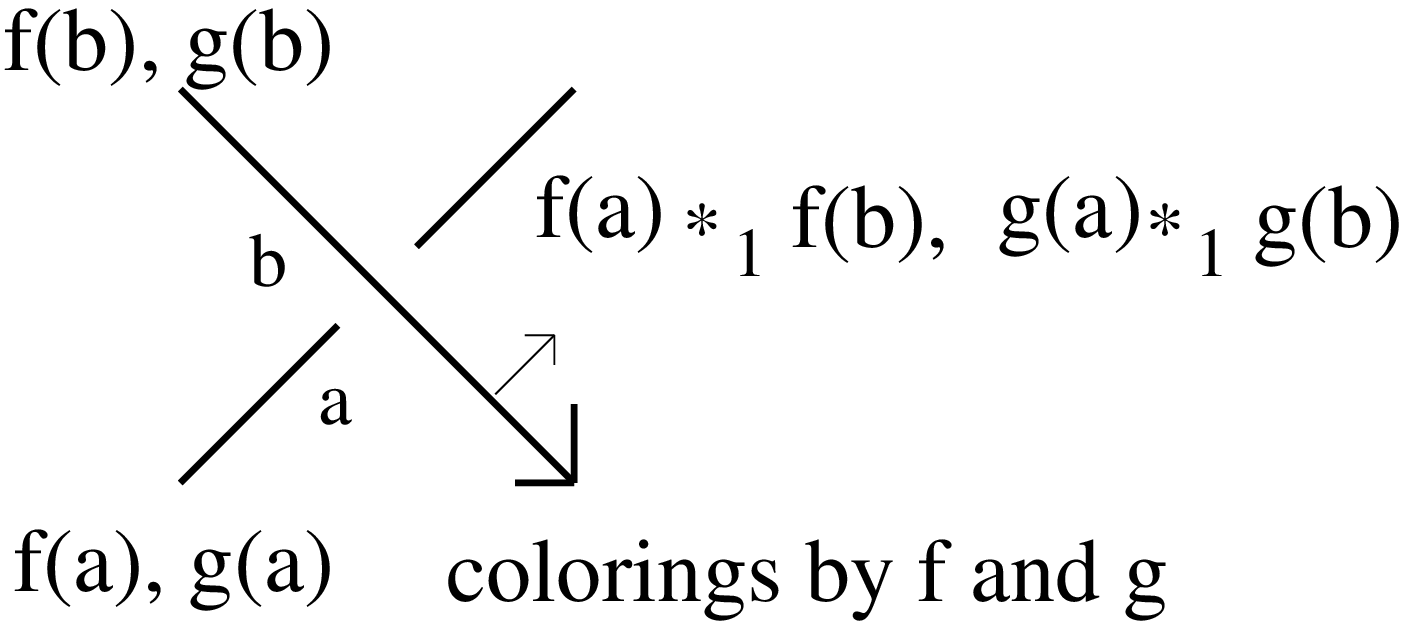,height=4.3cm}}\ \\ \ \\
\centerline{\psfig{figure=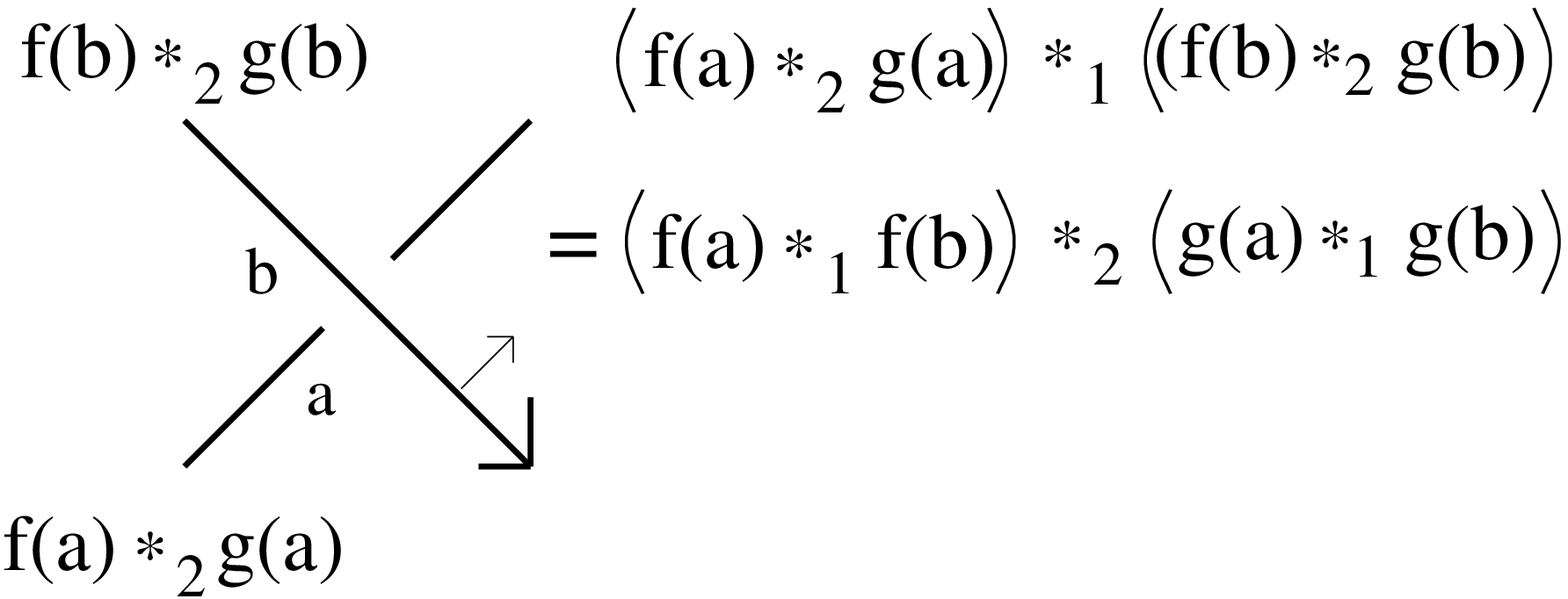,height=4.6cm}}
\\ \ \\
\centerline{Figure 1.3;  Entropy condition for composition of $f$ and $g$: $f*_2g$}
\ \\
Notice that if $g$ is a trivial coloring, say $g(arc)= x$ for any arc then any crossing forces $x*_1x=x$
and the entropic 
equation reduces to $(f(a)*_2x)*_1(f(b)*_2x) = (f(a)*_1f(b))*_2(x*_1x)= (f(a)*_1f(b))*_2x$ (right distributivity).
For use of entropic magmas in Knot Theory, see \cite{N-P-4,Prz-1,Prz-2,Prz-8,P-T-1,P-T-2,Si}; compare also 
Proposition \ref{Proposition 2.6}. 

We introduce now a monoid of binary operations and show that distributivity can be studied in the 
context of this monoid. Then we
compare homology for associative structures (semigroups) with that for distributive structures (shelves).
We also compare extensions in associative and distributive cases.

The paper is planned as a continuation of pioneering essay \cite{Prz-5} and for completeness we 
recall parts of the essay.

\section{Monoid of binary operations}\label{Section 2}
Let $X$ be a set and $*:X\times X \to X$ a binary operation. We call $(X;*)$ a 
{\it magma}\footnote{The term magma was used by J-P.~Serre \cite{Ser} and Bourbaki \cite{Bou}, replacing 
the older term {\it groupoid} which started to mean a category with every morphism invertible.}.
For any $b\in X$ the adjoint map $*_b: X\to X$, is defined by $*_b(a)=a*b$.
Let $Bin(X)$ be the set of all binary operations on $X$.

\begin{proposition}\label{Proposition 2.1} $Bin(X)$ is a monoid
(i.e. semigroup with identity)  with the composition
$*_1*_2$ given by $a*_1*_2b= (a*_1b)*_2b$, and the identity $*_0$ being the right trivial operation, that is,
$a*_0b=a$ for any $a,b\in X$.
\end{proposition}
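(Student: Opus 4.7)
The plan is to verify the three axioms of a monoid directly from the definitions: closure, associativity, and the existence of a two-sided identity. Closure is immediate because $a*_1*_2 b := (a*_1b)*_2 b$ assigns an element of $X$ to any pair $(a,b) \in X\times X$ whenever $*_1$ and $*_2$ are themselves binary operations on $X$, so the composition is a well-defined element of $Bin(X)$.

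For associativity, I would compute both $(*_1*_2)*_3$ and $*_1(*_2*_3)$ evaluated at a generic pair $(a,b)$ by unfolding the definition twice. The left-hand composition gives
\[
a\bigl((*_1*_2)*_3\bigr)b = \bigl(a(*_1*_2)b\bigr)*_3 b = \bigl((a*_1b)*_2b\bigr)*_3 b,
\]
while the right-hand composition gives
\[
a\bigl(*_1(*_2*_3)\bigr)b = (a*_1b)(*_2*_3)b = \bigl((a*_1b)*_2b\bigr)*_3 b,
\]
and these agree. The structural reason this works is that the second argument $b$ is preserved throughout the definition $a*_1*_2 b = (a*_1b)*_2 b$, so iterated compositions just read off a nested application of the individual operations to a chain whose right argument is always $b$.

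For the identity, I would check both sides against $*_0$: on one side $a(*_1*_0)b = (a*_1 b)*_0 b = a*_1 b$, and on the other $a(*_0*_1)b = (a*_0 b)*_1 b = a*_1 b$, so $*_0$ is a two-sided identity. I do not foresee any real obstacle: the only subtlety worth flagging is that the definition is asymmetric in its two arguments (the right argument $b$ plays a distinguished role), which is precisely what makes the right-trivial operation the identity rather than the left-trivial one; it is also what makes this monoid the natural home for right self-distributivity, as will be used in the sequel.
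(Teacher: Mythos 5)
Your verification is correct, and it is exactly the routine check the paper leaves implicit (no proof is given there): closure is automatic, associativity follows because the right argument $b$ is held fixed so both parenthesizations unfold to $((a*_1b)*_2b)*_3b$, and $*_0$ is a two-sided identity by the two one-line computations you give. Your closing remark correctly identifies the asymmetry that makes the right-trivial operation, rather than the left-trivial one, the unit.
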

If $* \in Bin(X)$ is invertible then $*^{-1}$ is usually denoted by $\bar *$.
One should remark that $*_0$ is distributive with respect to any other operation, that is,
$(a*b)*_0c= a*b= (a*_0c)*(b*_0c)$, and $(a*_0b)*c= a*c= (a*c)*_0(b*c)$.
\begin{definition}\label{Definition 2.2} Let $(X;*)$ be a magma, then:
\begin{enumerate}
\item[(i)] If $*$ is right self-distributive, that is, $(a*b)*c=(a*c)*(b*c)$,
then $(X;*)$ is called an RDS (right distributive structure) or a 
shelf (the term coined by Alissa Crans and used in knot theory \cite{Cr}).
\item[(ii)] If a shelf $(X;*)$ satisfies the idempotent condition, $a*a=a$ for any $a\in X$, then it
is called an RDI structure or {\it right spindle}, or just a spindle (again the term coined by Crans).
\item[(iii)] If a shelf $(X;*)$ has $*$ invertible in $Bin(X)$ (equivalently $*_b$ is a bijection for any $b\in X$),
then it is called a {\it rack} (the term wrack, like in ``wrack and ruin", of J.H.Conway from 1959 \cite{C-W},
was modified to rack in \cite{F-R}).
\item[(iv)] If a rack $(X;*)$ satisfies the idempotent condition, then it is called a {\it quandle} (the term
coined in Joyce's PhD thesis of 1979 \cite{Joy-1,Joy-2}). Axioms of a quandle were motivated by three
Reidemeister moves (idempotent condition by the first move, invertibility by the second, and right self-distributivity
by the third move).
\item[(v)] If a quandle $(X;*)$ satisfies $**=*_0$ (i.e. $(a*b)*b=a$) then it is called  {\it kei} or
an involutive quandle. The term kei (\psfig{figure=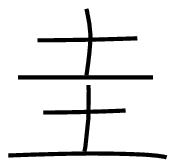,height=0.3cm})
was coined in a pioneering paper by M.~Takasaki\footnote{Mituhisa Takasaki worked at Harbin Technical
University,
likely as an assistant to K\^oshichi Toyoda. Both perished when Red Army entered Harbin in August 1945.
} in 1942 \cite{Tak}
\end{enumerate}
\end{definition}
The main early example of a rack (and a quandle) was a group $G$ with a $*$ operation given
by conjugation, that is, $a*b=b^{-1}ab$ (Conway thought of this as a ``wrack" of a group). 
Another example, considered already in Conway-Wraith correspondence \cite{C-W}, is defined for any group with 
$a*b=ba^{-1}b$ and called by Joyce (after Bruck \cite{Bru}) a core quandle. This example, for 
a group, $H$, abelian was already studied by Takasaki so we call it Takasaki kei (or quandle) and 
denote by $T(H)$ (in abelian notation we have $a*b=2b-a$), compare \cite{N-P-1}. 
$T(Z_n)$ is often called a dihedral quandle 
and denoted by $R_n$; it can be interpreted as composed of reflections of the dihedral group $D_{2n}$ 
(we can mention that rack and quandle homology of $T(Z_n)$ for prime $n$ has been computed in 
\cite{N-P-2,Cla,Nos}).
 
More general examples still starting from a group are given in Joyce paper \cite{Joy-2}:
\begin{example}\label{Example 2.3} 
Let $G$ be a group and $t:G\to G$ a group homomorphism then we have the following 
spindle structures on $G$:
\begin{enumerate}
\item[(1)] $a*_1b= t(ab^{-1})b$,
\item[(2)] $a*_2b= t(b^{-1}a)b$,
\item[(3)] If $t$ is invertible both examples give quandles  
where $\bar *_1$ and $\bar *_2$  are given by the formulas: \\
(i) $a\bar *_1 b= t^{-1}(ab^{-1})b$ thus $\bar *_1$ yielded by the automorphism $t$ 
is equal to $*_1$ yielded by the automorphism $t^{-1}$. \\
(ii) $a\bar *_2 b=bt^{-1}(ab^{-1})$, e.g. we check that \\
 $(a*_2b)\bar*_2b=bt^{-1}((t(b^{-1}a)bb^{-1})=a $. This example is related to the fundamental group of cyclic (branched) 
covers of $S^3$ along a link. Locally, at every crossing we have relations $C=\tau^{-1}(B^{-1}A)B$ and $A=B\tau(CB^{-1})$,
as illustrated in Figure 2.1 \cite{Prz-3,Pr-Ro-1,DPT}.
\item[(4)] If $G$ is an abelian group both examples lead to the same spindle called Alexander spindle 
(for $t$ invertible, Alexander quandle). In abelian notation we get $a*b= ta-tb+b= (1-t)b + ta$.
This two sided distributive structure was already considered in 1929 by C.Burstin\footnote{Celestyn Burstin (1888-1938)
was born in Tarnopol, where he obtained ``Matura" in 1907, he moved to Vienna where in 1911 he completed university. 
In 1929 he moved to Minsk where he was a member of the Belarusian National Academy of Sciences,
and a Director of the Institute of Mathematics of the Academy. In December 1937, 
he was arrested on suspicion of activity as a
spy for Poland and Austria. He died in October 1938, when interrogated in a prison in Minsk (``Minskaja Tjurma");
he was rehabilitated March 2, 1956 \cite{Bur-1,Bur-M,Mal,Mio}.} and 
W.Mayer\footnote{Walter Mayer (1887--1948) is well known for Mayer-Vietoris sequence and for being
assistant to A.Einstein at Institute for Advanced Study, Princeton \cite{Isa}.} \cite{B-M}.
\item[(5)] If $t=-1$ we get $a*b= 2b-a$ and this structure, as mentioned before, was the main example of Kei by Takasaki 
so we denote it by $T(G)$.  
\item[(6)] $a*_3b= t(ba^{-1})b$ with $t^2=t$. It is a quandle if and only if  $t=Id$ in which case we get 
a quandle called the core quandle of $G$.
\end{enumerate} 
\end{example}
\ \\
\centerline{{\psfig{figure=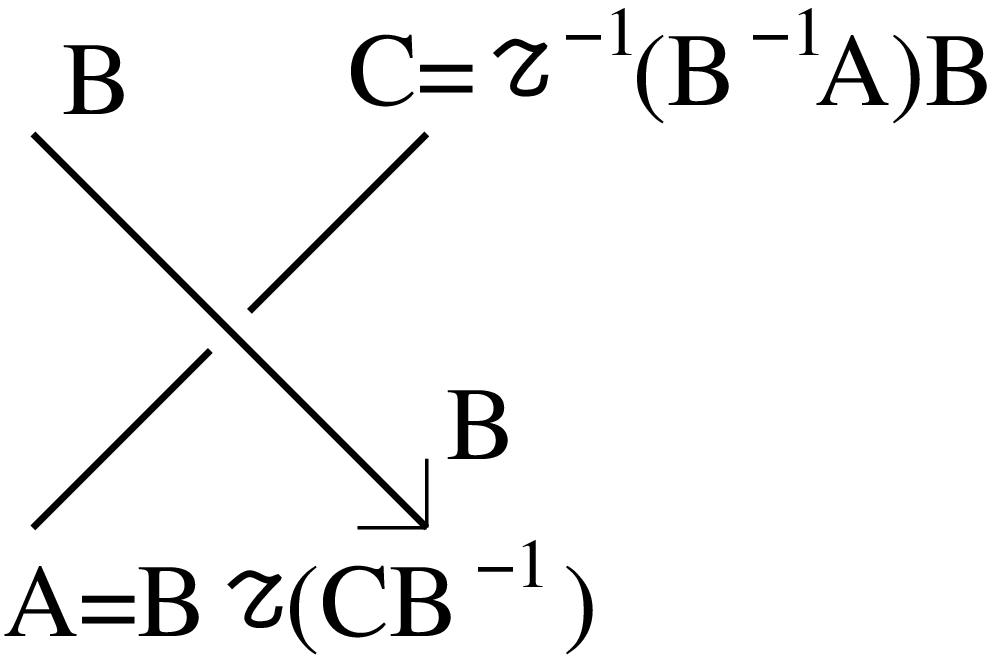,height=3.9cm}}}\ \\ \ \\
\centerline{Figure 2.1; relations for cyclic covering; see Example \ref{Example 2.3}(3)} \ \\

Definition \ref{Definition 2.2} describes properties of an individual magma $(X;*)$. It is also useful to
consider subsets or submonoids of $Bin(X)$ satisfying the related conditions (compare \cite{R-S,Mov,Deh-1,Prz-5}. 
\begin{definition}\label{Definition 2.3}
We say that a subset $S \subset Bin(X)$ is a distributive set if all
pairs of elements $*_{\alpha},*_{\beta} \in S$ are right distributive,
 that is, $ (a*_{\alpha}b)*_{\beta}c= (a*_{\beta}c)*_{\alpha}(b*_{\beta}c)$ (we allow  $*_{\alpha}=*_{\beta}$).
\begin{enumerate}
\item[(i)]
The pair $(X;S)$ is called a multi-shelf if $S$ is a distributive set.
If $S$ is additionally a submonoid (resp. subgroup) of $Bin(X)$, we say that it is a distributive monoid (resp. group).
\item[(ii)]
If $S \subset Bin(X)$ is a distributive set such that each $*$ in $S$ satisfies the idempotent condition, we call
$(X;S)$ a multi-spindle.
\item[(iii)] We say that $(X;S)$ is a multi-rack if $S$ is a distributive set, and  all elements of $S$ are invertible.
\item[(iv)] We say that $(X;S)$ is a multi-quandle if $S$ is a distributive set, and
elements of $S$ are invertible and satisfy the idempotent condition.
\item[(v)] We say that $(X;S)$ is a multi-kei if it is a multi-quandle with $**=*_0$ for any $*\in S$.
Notice that if $*_1^2=*_0$ and $*_2^2=*_0$ then $(*_1*_2)^2=*_0$; more generally if
$*_1^n=*_0$ and $*_2^n=*_0$ then $(*_1*_2)^n=*_0$. This follows from the fact that elements of a 
multi-quandle commute pairwise (this was observed by M.Jab{\l}onowski \cite{Prz-5}).
\end{enumerate}
\end{definition}

\begin{proposition}\cite{Prz-5}\label{Proposition 2.5}
\begin{enumerate}
\item[(i)] If $S$ is a distributive set and $*\in S$ is invertible, then $S\cup \{\bar *\}$ is also a
distributive set.
\item[(ii)] If $S$  is a distributive set and $M(S)$ is the monoid generated by $S$ then $M(S)$ is a
distributive monoid.
\item[(iii)] If $S$  is a distributive set of invertible operations and $G(S)$ is the group generated by $S$, then
$G(S)$ is a distributive group.
\end{enumerate}
\end{proposition}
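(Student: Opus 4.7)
The plan is to prove the three parts in order, using (i) and (ii) to reduce (iii) to a short bookkeeping argument.

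For (i), the task is to verify, for every $*_\alpha\in S$, the two distributivity equations involving $\bar *$ and $*_\alpha$. I would use the fact that $*_c\colon X\to X$ is a bijection with inverse $\bar *_c$. For instance, to check $(a*_\alpha b)\bar * c=(a\bar * c)*_\alpha (b\bar * c)$, I apply the bijection $*_c$ to both sides; the left side becomes $a*_\alpha b$, and on the right side the distributivity of $*$ and $*_\alpha$ already available in $S$ gives $((a\bar * c)*_\alpha (b\bar * c))*c=((a\bar * c)*c)*_\alpha ((b\bar * c)*c)=a*_\alpha b$. The symmetric equation $(a\bar * b)*_\alpha c=(a*_\alpha c)\bar * (b*_\alpha c)$ is obtained the same way by applying $*_c$ to both sides and using distributivity of $*_\alpha$ with $*$. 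One also needs $\bar *$ distributive with itself, which follows by the same bijection trick applied to the self-distributivity of $*$.

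For (ii), I would use induction on the total word length of two elements $A,B\in M(S)$ with respect to a chosen set of generators from $S$. The base cases are handled by the hypothesis on $S$ and the observation (already in the text) that the identity $*_0$ is distributive with every operation. For the inductive step, write $A=*_1*_2$ (with $*_1,*_2$ of shorter length) and let $B=*_3$. The key computations are
\begin{align*}
(a*_1*_2 b)*_3 c &= ((a*_1 b)*_2 b)*_3 c = ((a*_1 b)*_3 c)*_2(b*_3 c)\\
 &= ((a*_3 c)*_1(b*_3 c))*_2(b*_3 c) = (a*_3 c)*_1*_2(b*_3 c),
\end{align*}
and symmetrically
\begin{align*}
(a*_3 b)*_1*_2 c &= ((a*_3 b)*_1 c)*_2 c = ((a*_1 c)*_3(b*_1 c))*_2 c\\
 &= ((a*_1 c)*_2 c)*_3((b*_1 c)*_2 c) = (a*_1*_2 c)*_3(b*_1*_2 c),
\end{align*}
which use only the two distributivities supplied by induction. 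Writing $B$ as a product and iterating gives distributivity of arbitrary elements of $M(S)$.

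For (iii), I would first apply (i) repeatedly: starting from $S$, the sets $S\cup\{\bar *_{i_1},\dots,\bar *_{i_k}\}$ remain distributive, so in the union $S'=S\cup\{\bar * : *\in S\}$ every pair is distributive. Then (ii) applied to $S'$ shows that $M(S')$ is a distributive monoid; but $M(S')=G(S)$, since every element of $G(S)$ is a product of generators and their inverses. I expect the main subtlety to be exactly part (i): one must resist the temptation to ``invert'' distributivity formally, and instead pass through the bijectivity of $*_c$. Once (i) is pinned down, parts (ii) and (iii) are a straightforward induction plus a closure argument.
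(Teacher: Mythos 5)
Your plan is correct and follows essentially the route of the original argument (the paper itself gives no proof here, citing \cite{Prz-5}, where the proof is exactly this kind of direct computation): part (ii) rests on the two displayed calculations, which are verified correctly and, combined with the fact that $*_0$ is distributive with everything, give the induction on word length; part (iii) is the closure argument you describe, since a product of invertible operations is invertible with inverse again a word in $S\cup\{\bar *\colon *\in S\}$, so $M(S')=G(S)$. The only slip is in part (i): for the first identity $(a*_\alpha b)\bar * c=(a\bar * c)*_\alpha(b\bar * c)$ applying the bijection $*_c$ works as you say, but for the ``symmetric'' identity $(a\bar * b)*_\alpha c=(a*_\alpha c)\bar *(b*_\alpha c)$ applying $*_c$ does nothing (the $\bar *$ on the right has operand $b*_\alpha c$, not $c$); the correct move is to apply the right translation $*_{b*_\alpha c}$ to both sides (or equivalently substitute $a=u*b$ using surjectivity of $*_b$), after which the pair distributivity of $*$ and $*_\alpha$ gives $((a\bar * b)*_\alpha c)*(b*_\alpha c)=((a\bar *b)*b)*_\alpha c=a*_\alpha c$ as needed; the same adjustment handles $(a\bar * b)\bar * c=(a\bar * c)\bar *(b\bar * c)$ using the already established mixed identities. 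With that repair the whole argument goes through as you outlined.
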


We show, after G.Mezera \cite{Mez}, 
the fact that any group can be embedded in $Bin(X)$ for
some $X$, in particular the regular embedding of $G$ in $Bin(G)$ is given by $g \to *_g$ with
$a*_gb= ab^{-1}gb$ (compare \cite{Lar} and Example X.3.15 of \cite{Deh-2} where the operation $a*_gb= ab^{-1}gb$ 
is called a half-conjugacy). The expression $ab^{-1}gb$ was also discussed with respect to 
free rack by Fenn and Rourke (compare Remark \ref{Remark 8.2}).

Proposition \ref{Proposition 2.5} has its analogue for entropic magmas (that is magmas for which 
$(a*b)*(c*d)= (a*c)*(b*d)$). More precisely, we say that a subset $S\in Bin(X)$ is an entropic set if 
for any $*_\alpha, *_\beta \in S$ we have the entropic condition: 
$$(a*_{\alpha}b)*_{\beta}(c*_{\alpha}d)= (a*_{\beta}c)*_{\alpha}(b*_{\beta}d). \mbox{ Then we have:}$$
\begin{proposition}\cite{N-P-4}\label{Proposition 2.6}
\begin{enumerate}
\item[(i)] If $S$ is an entropic set and $*\in S$ is invertible, then $S\cup \{\bar *\}$ is also an 
entropic set.
\item[(ii)] If $S$ is an entropic set and $M(S)$ is the monoid generated by $S$ then $M(S)$ is 
 an entropic monoid.
\item[(iii)] If $S$  is  an entropic set of invertible operations and $G(S)$ is the group generated by $S$, then
$G(S)$ is an entropic group.
\end{enumerate}
\end{proposition}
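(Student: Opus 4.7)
The plan is to mirror the proof of Proposition \ref{Proposition 2.5}, establishing (i) and (ii) and then obtaining (iii) as a direct consequence. A useful preliminary observation is that the entropic condition $(a*_\alpha b)*_\beta(c*_\alpha d)=(a*_\beta c)*_\alpha(b*_\beta d)$ is symmetric in the pair $(*_\alpha,*_\beta)$: swap the subscripts and then rename $b\leftrightarrow c$ to recover the same identity. So entropicity of $*_\alpha$ with $*_\beta$ is equivalent to entropicity of $*_\beta$ with $*_\alpha$, which I will invoke freely.

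For (i), I fix $*_\beta\in S$ and derive entropicity of $\bar *$ with $*_\beta$ from entropicity of $*$ with $*_\beta$. The trick is to substitute $a\mapsto a\bar * b$ and $c\mapsto c\bar * d$ into $(a*b)*_\beta(c*d)=(a*_\beta c)*(b*_\beta d)$, use the invertibility identity $(x\bar * y)*y=x$ to collapse the left-hand side, and obtain
$$a*_\beta c=\bigl((a\bar * b)*_\beta(c\bar * d)\bigr)*(b*_\beta d).$$
Applying $\bar *$ with right argument $b*_\beta d$ to both sides and using $(x*y)\bar * y=x$ on the right yields the desired entropicity
$$(a\bar * b)*_\beta(c\bar * d)=(a*_\beta c)\bar *(b*_\beta d).$$
Taking $*_\beta=*$ gives entropicity of $\bar *$ with $*$; by symmetry $*$ is then entropic with $\bar *$, and one more application of the same substitution argument (with $*_\beta=\bar *$) yields entropicity of $\bar *$ with itself.

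For (ii), I argue that for each fixed $*_\gamma\in Bin(X)$ the set $E(*_\gamma):=\{*\in Bin(X):*\text{ is entropic with }*_\gamma\}$ is a submonoid of $Bin(X)$. Clearly $*_0\in E(*_\gamma)$ since both sides of the entropic equation collapse to $a*_\gamma c$ (resp.\ $a*_\alpha b$) when one of the operations is $*_0$. For closure under composition, assume $*_\gamma$ is entropic with both $*_\alpha$ and $*_\beta$ and compute
$$(a*_\gamma b)*_{\alpha\beta}(c*_\gamma d)=\bigl((a*_\gamma b)*_\alpha(c*_\gamma d)\bigr)*_\beta(c*_\gamma d)$$
by definition of the monoid product. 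Entropicity of $*_\gamma$ with $*_\alpha$ rewrites the inner bracket as $(a*_\alpha c)*_\gamma(b*_\alpha d)$; entropicity of $*_\gamma$ with $*_\beta$ applied to $(a*_\alpha c,b*_\alpha d,c,d)$ then collapses the whole expression to $(a*_{\alpha\beta}c)*_\gamma(b*_{\alpha\beta}d)$, as required. Since each generator $*_\alpha\in S$ has all of $S$ inside $E(*_\alpha)$ (entropicity of $S$), the submonoid $E(*_\alpha)$ contains $M(S)$. Fixing any $*_\delta\in M(S)$, the previous sentence shows every generator lies in $E(*_\delta)$, so by the submonoid property $E(*_\delta)\supseteq M(S)$, which is precisely entropicity of arbitrary pairs in $M(S)$. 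Part (iii) is now immediate: by (i) the set $S\cup\{\bar *:*\in S\}$ is entropic, and its monoid closure, which coincides with $G(S)$ because its elements are invertible, is entropic by (ii). The main obstacle is the substitution/inversion step in (i); once that identity is in place the rest of the proof is structural bookkeeping of the kind already used for Proposition \ref{Proposition 2.5}.
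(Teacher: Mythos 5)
Your proof is correct: the symmetry of the entropic condition, the substitution $a\mapsto a\bar{*}b$, $c\mapsto c\bar{*}d$ followed by cancellation with $(x*y)\bar{*}y=x$ for part (i), and the observation that $E(*_\gamma)$ is a submonoid of $Bin(X)$ for part (ii) all check out, and (iii) follows as you say. The paper states Proposition \ref{Proposition 2.6} without proof, citing \cite{N-P-4}, and your argument is the natural one paralleling Proposition \ref{Proposition 2.5}, so there is no divergence to report.
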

 
In the next section we consider homology theory of various magmas, it is useful here to define, for any magma 
$(X;*)$ a supporting structure which we call an $X$-set (it is an old concept for (semi)groups and for quandles it was 
first considered by S.Kamada).

\begin{definition}\label{Definition 2.7}
Let $(X;*)$ a magma and $E$ a set. We say that $E$ is an $X$-set (or right $X$-set) if there is a 
function (right action) $*_E: E\times X \to E$. In general we do not put any conditions on $*_E$ but 
if our  magma satisfies some conditions (e.g. associativity or distributivity) then $*_E$ should satisfy
some related conditions. In particular, we will look for a magma structure on $X\sqcup E$ having similar structure.
\end{definition}

In the following few sections we discuss various homology theories for magmas (e.g. associative or distributive).
In broad approach we follow \cite{Prz-5} but we stress the use of $X$-sets in our definitions.

\section{Homology of magmas}\label{Section 3}

We survey in this section  various homology theories, starting from homology of abstract simplicial complexes; 
then we extract (old and new) properties to define a presimplicial module and a (weak) simplicial module.
Further we give two examples of homology for associative structures (semigroups), and,  an important 
in knot theory, example
of homology for right self-distributive structures (RDS or shelves). Later we go back to very general notion of
homology of a small category with coefficient in a functor to $R$-Mod, and recall the notion of a 
geometric realization in the case of a (pre)simplicial set, and (pre)cubic set. Reader interested only in distributive 
homology can go directly to Section \ref{Section 6}.

\subsection{Homology of abstract simplicial complexes}\label{Subsection 3.1}
\
Our goal is to introduce homology of distributive magmas but to keep a historical 
perspective we start with the standard (oriented and ordered) homology 
of abstract simplicial complexes as they provide the framework for all homology we consider.

\begin{definition}\label{Definition 3.1}
 The abstract simplicial complex ${\mathcal K}=(V,P)$ is a pair of sets where $V=V({\mathcal K})$ is called a set of
vertices and $P({\mathcal K})=P \subset 2^V$, called the set of simplexes of ${\mathcal K}$ and it satisfies:
 elements of $P$ are finite subsets of $V$, include all one-element subsets, and if $s'\subset s \in P$ then also $s'\in P$
(that is a subsimplex of a simplex is a simplex).\footnote{ Usually we do not allow $\emptyset$ as a simplex,
but in some situations it is convenient to allow also an empty simplex, say of dimension $-1$; 
it will lead naturally to augmented chain complexes.}
 A simplex of $n+1$ vertices is called $n$-dimensional simplex, or succinctly, $n$-simplex 
( we write $s=\{v_{i_0},v_{i_1},...,v_{i_n}\}$).
We define $dim ({\mathcal K})$ as the maximal dimension
of a simplex in ${\mathcal K}$ (may be $\infty$ if there is no bound).

We consider, additionally, the category of abstract simplicial complexes with a class of objects 
composed of abstract simplicial complexes. $Mor(K_1,K_2)$ is the set of maps from $V(K_1)$ to $V(K_2)$ 
which send a simplex to a simplex (that is if $f\in Mor(K_1,K_2)$, $s\in P(K_1)$ then $f(s)\in P(K_2)$).
\end{definition}

We recall here three classical (equivalent) definitions of a homology of an abstract simplicial complex:
ordered, normalized ordered, and oriented.
\begin{definition}\label{Definition 3.2}
Recall that a chain complex  $\mathcal C$ is a sequence of modules over a fixed ring $k$ 
(here always commutative with identity),
$${\mathcal C}:\ \  ...\stackrel{\partial_{n+2}}{\longrightarrow} C_{n+1} \stackrel{\partial_{n+1}}{\longrightarrow} 
C_n \stackrel{\partial_{n}}{\longrightarrow} C_{n-1} \stackrel{\partial_{n-1}}{\longrightarrow} ... $$
such that $\partial_n\partial_{n+1}=0$ (succinctly $\partial^2=0$). 
Thus we have $im \partial_{n+1}\subset ker \partial_{n}$, 
and we define homology $H_n({\mathcal C})=ker \partial_{n}/im\ \partial_{n+1}$.\\
 Now for an abstract simplicial complex $K=(V,P)$ one defines:
\begin{enumerate}
\item[(I)](Ordered homology)\\
Consider a chain complex ${\mathcal C}^{ord}$ with $k$-modules $C^{ord}_n= C^{Ord}_n({\mathcal C})$ a submodule
of $ kV^{n+1}$ generated by all sequences
$(x_0,x_1,...,x_n)$, allowing repetitions, such that the set $\{x_0,x_1,...,x_n\}$ is a simplex in $P$ (possibly 
of dimension smaller from $n$).
The boundary operation is given on the basis by:
$$\partial (x_0,x_1,...,x_n) = \sum_{i=0}^n (-1)^id_i=\sum_{i=0}^n (-1)^i (x_0,...,x_{i-1},x_{i+1},...,x_n).$$
The ordered homology of ${\mathcal K}$ is defined $$H^{ord}_n({\mathcal K},k)=ker \partial_{n}/im\ \partial_{n+1}.$$
If $k=\Z$ we write $H^{ord}_n({\mathcal K})$.\\
Notices, that the maps $d_i: C^{ord}_n \to C^{ord}_{n-1}, (0\leq i \leq n)$, $d_i(x_0,x_1,...,x_n)= 
(x_0,...,x_{i-1},x_{i+1},...,x_n)$, called the face maps, satisfy:
$$ \mbox{(1) }d_id_j=d_{j-1}d_i \mbox{ for any } i < j.$$
The system $(C_n,d_i)$ satisfying the above equality is called a presimplicial module\footnote{The
concept was introduced in 1950 by Eilenberg and Zilber under the name {\it semi-simplicial complex}, \cite{E-Z}.}
 and
if we limit ourselves to $(V^{n+1},d_i)$ it is called a presimplicial set (compare Definition \ref{Definition 3.3}).
The important basic observation is that if $(C_n,d_i)$ is a presimplicial module then $(C_n,\partial_n)$, for 
$\partial_n=\sum_{i=0}^n(-1)^id_i$, is a chain complex. 
\\
Motivation for the boundary operation:\\
it is coming from the geometrical realization of an abstract
simplicial complex as illustrated below (the general setting of geometric realization of a simplicial set 
is discussed in Section \ref{Section 13}):\

$$\partial(x_0,x_1,x_2)= \partial({\parbox{2.7cm}{\psfig{figure=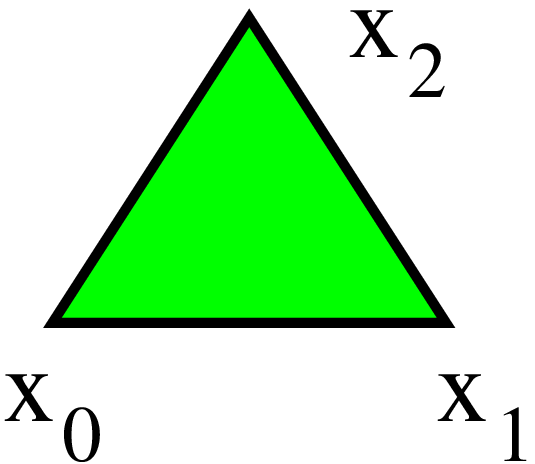,height=2.5cm}}}) =
{\parbox{2.7cm}{\psfig{figure=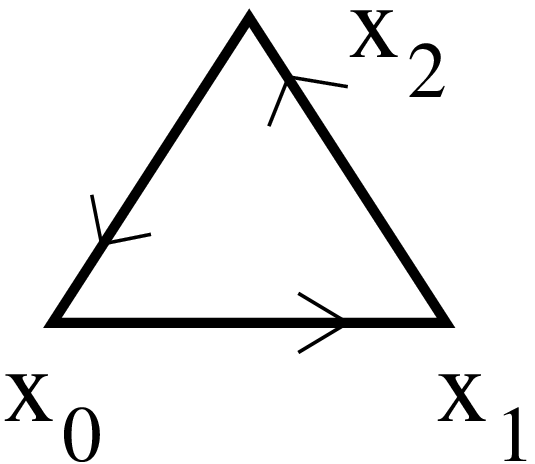,height=2.5cm}}} = $$
$$  (x_1,x_2)-(x_0,x_2)+(x_0,x_1).$$
 

\item[(II)] (Normalized ordered homology).\\
The ordered chain complex allows degenerate simplexes (when vertices repeat, in particular neighboring vertices repeat). 
We define $i$-degeneracy maps $s_i: C_n \to C_{n+1}$ ($0 \leq i \leq n$) by $s_i(x_0,...,x_n)=
(x_0,...,x_{i-1},x_i,x_i,x_{i+1},...,x_n)$. We can check here easily that:
$$\mbox{(2) } s_is_j=s_{j+1}s_i,\ \ 0\leq i \leq j \leq n, $$
$$ \mbox{(3) } d_is_j= \left\{ \begin{array}{rl}
 s_{j-1}d_i &\mbox{ if $i<j$} \\
s_{j}d_{i-1} &\mbox{ if $i>j+1$}
       \end{array} \right.
$$
$$ \mbox{(4) } d_is_i=d_{i+1}s_i= Id_{C_n}. $$
$(C_n,d_i,s_i)$ satisfying properties (1)-(4) is
called a simplicial module. The notion was introduced by Eilenberg and Zilber in 1950 under the name of complete
semi-simplicial complex \cite{E-Z}. It is convenient to rephrase the definition so it can be used to any
category:
\begin{definition}\label{Definition 3.3}
Consider a category $\mathcal C$, the sequence of objects $X_n$, $n\geq 0$ and for any $n$  morphisms
$d_i,s_i$, $0\leq i \leq n$, $d_i \in Mor(X_n, X_{n-1})$, and $s_i \in Mor(X_n, X_{n+1})$.
We call $(X_n;d_i,s_i)$ a simplicial category (e.g. simplicial set, simplicial module, or simplicial space)
 if the following four conditions hold.
\begin{enumerate}
\item[(1)] $d_id_j=d_{j-1}d_i$ for $i < j$,
\item[(2)] $s_is_j=s_{j+1}s_i$ for $i\leq j$,
\item[(3)] $$ s_is_j=s_{j+1}s_i,\ \ 0\leq i \leq j \leq n, $$
$$  d_is_j= \left\{ \begin{array}{rl}
 s_{j-1}d_i &\mbox{ if $i<j$} \\
s_{j}d_{i-1} &\mbox{ if $i>j+1$}
       \end{array} \right.
$$
\item[(4)] $d_is_i=d_{i+1}s_i= Id_{X_n}. $
\end{enumerate}
\end{definition}
Eilenberg and Mac Lane proved in 1947 that the degenerate part of a presimplicial module is an acyclic
chain complex (it has trivial homology) \cite{E-M-1} (the prove was more specific but the method worked for
all simplicial modules defined only 3 years later). We devote Subsection \ref{Subsection 3.2} to the proof,
 after \cite{Lod-1}, paying attention to which axioms of a simplicial module are used. In particular,
axiom (4) cannot be ignored as the degenerate chain complex of quandles, which satisfies the property (4)
only partially has often nontrivial homology.

Now back to normalized ordered homology: \\
Consider submodules $C^D_n({\mathcal K}$ (named degenerated modules) and defined by
$$C^D_n= span(s_0(C_{n-1}),s_1(C_{n-1}),...,s_{n-1}(C_{n-1})).$$ 
One check that $(C^D_n,\partial_n)$ is a subchain complex of $C^{ord}_n({\mathcal K})$. Details are given in 
Subsection \ref{Subsection 3.2}, were it is also proved that this chain complex is acyclic. 
We have also quotient chain complex, called normalized ordered chain complex with $C^N_n({\mathcal K})=
C_n({\mathcal K})/C^D_n({\mathcal K})$. As homology groups of $C^D_n({\mathcal K})$ are trivial we have isomorphism:

$$H^{ord}_n({\mathcal K},k) \to H^{N}_n({\mathcal K},k).$$

\item[(III)](Oriented homology).
We can consider smaller chain complex giving the same homology of $\mathcal K$ by taking the quotient of 
$C^N({\mathcal K})$ and considering only ``oriented simplexes". Formally, let $\bar C_n({\mathcal K})$ be 
a submodule of $C^N({\mathcal K})$ generated by ``transposition symmetrizers" 
$(x_0,...x_{i-1},x_i,x_{i+1},x_{i+2},...,x_n)+ (x_0,...x_{i-1},x_{i+1},x_i,x_{i+2},...,x_n)$.
One checks directly that  $(\bar C_n({\mathcal K}),\partial_n)$ is a subchain complex of $C^N({\mathcal K})$.
The oriented chain complex is the quotient: $C^{ori}({\mathcal K})= C^N({\mathcal K})/\bar C_n(({\mathcal K})$.
It require some effort to prove that the  quotient map $f: C^N({\mathcal K}) \to C^{ori}({\mathcal K})$ is 
a chain equivalence and thus $f_*: H^N({\mathcal K}) \to H^{ori}({\mathcal K})$ is is an isomorphism 
of homology modules. From this we conclude that all three definitions, ordered, normalized ordered, and 
oriented of homology of an abstract simplicial complex give the same result.

To have more concrete view of oriented chain complex and oriented homology we
  order vertices $V$ of ${\mathcal K}$ and interpret
 the chain group $C^{ori}_n({\mathcal C})$ as a subgroup of $ZV^{n+1}$ freely generated by
$n$-dimensional simplexes, $(x_0,x_1,...,x_n)$ (we assume that $x_0<x_1<...<x_n$ in our
ordering). In essence, with ordering, we are able to choose representatives of equivalence classes in 
$C^{ori}_n({\mathcal K})$ and the boundary operation
$$\partial (x_0,x_1,...,x_n) = \sum_{i=0}^n (-1)^id_i \mbox{ where }$$
$$ d_i(x_0,x_1,...,x_n) = (x_0,...,x_{i-1},x_{i+1},...,x_n),$$
preserves our choice so with given ordering we have a split chain map $g: C^{ori}_n({\mathcal K})\to C^N_n({\mathcal K})$.
The quotient map, with our ordering can be written as $f_n(x_0,...,x_n)=
(-1)^{|\pi|}(x'_0, ..., x'_n)$ where $\pi \in S_{n+1}$ is the permutation such that $x'_i=x_{\pi(i)}$ and $x'_i<x'_{i+1}$
(if $x_i=x_j$ for some $i\neq j$ then we put $f(x_0,...,x_n)=0$). Immediately, we have $f_ng_n=Id_{C^{ori}_n({\mathcal K})}$.
The proof that $gf$ is chain homotopic to identity on $C^{N}_n({\mathcal K})$ (and so $f$ is chain equivalence)  
requires more effort.\footnote{The standard 
Eilenberg-Mac Lane proof uses acyclic modules method \cite{E-M-2,Spa}, however in our case one can give shorter proof
(the idea is still that of Eilenberg-Mac Lane):
consider the chain map $f: {\mathcal C}^N \to {\mathcal C}^{ori}$ given by $f_n(x_0,...,x_n)=
(-1)^{|\pi|}(x'_0, ..., x'_n)$ where $\pi \in S_{n+1}$ is the permutation such that $x'_i=x_{\pi(i)}$ and $x'_i<x'_{i+1}$ 
(if $x_i=x_j$ for some $i\neq j$ then we put $f(x_0,...,x_n)=0$). The map $g: {\mathcal C}^{ori} \to {\mathcal C}^N$ 
is defined to be embedding; therefore $fg=Id_{\mathcal C}$. 
We show that $gf$ induces identity on homology of ${\mathcal C}^N$.
We construct a chain homotopy between $gf$ and the identity inductively, starting from $F_0=0$. The main 
ingredient of the proof is the fact that for a simplex $s=(x_0, ...,x_n)$ the subchain complex $\bar s=(s,2^s)$ 
 of ${\mathcal C}^N$ is acyclic ($H^N_n(\bar s)=0$ for $n>0$ and $H^N_0(\bar s) = k$).\\
Step n: assume that $F_{n-1},...,F_0$ are already constructed and we construct a map
  $F_n:C^N_n \to C^N_{n+1}$  such that $\partial_{n+1} F_n = -F_{n-1}\partial_n + Id -gf$.

We compute: that $$\partial_n(-F_{n-1}\partial_n + Id -gf)= -(\partial_nF_{n-1})\partial_n +\partial_n -\partial_n(gf)=$$
$$F_{n-2}\partial_{n-1}\partial_n -\partial_n + (gf)\partial_n \partial_n -\partial_n(gf)= 0.$$
Because chain complex $ C^N_n(\bar s)$ is exact at place $n$ and $-F_{n-1}\partial_n + Id -gf$ is in 
the kernel of this chain complex, it is also in the image, say, 
$\partial_{n+1}c_{n+1}=-F_{n-1}\partial_n + Id -gf$. Then we declare 
$F_n(s)= c_{n+1}$. In fact here $c_{n+1}$ can be obtained by putting any, fixed, vertex of $s$ in front of
$(-F_{n-1}\partial_n + Id -gf)$. Such constructed $F_n$ satisfies 
$\partial_{n+1}F_n + F_{n-1}\partial_n= Id-gf$.
Our map is well defined as we constructed it on the basis of $C^N_n$.} 
\end{enumerate}
\end{definition}


\subsection{Degenerate subcomplex}\label{Subsection 3.2} 
Consider a presimplicial module $(C_n,d_i)$ with degenerate maps $s_i$. We define degenerate modules 
$$C^D_n= span(s_0(C_{n-1}),s_1(C_{n-1}),...,s_{n-1}(C_{n-1})).$$ 
We check which conditions are needed so that $(C^D_n,d_i)$ is a sub-presimplicial module of $(C_n,d_i)$.
We have:
$$\partial_ns_p =  \sum_{i=0}^{n+1}(-1)^id_is_p =  \sum_{i=0}^{p-1}(-1)^id_is_p +(-1)^p(d_ps_p- d_{p-1}s_p +
  \sum_{i=p+2}^{n+1}(-1)^id_is_p \stackrel{(3)}{=} $$ 
$$\sum_{i=0}^{p-1}(-1)^is_{p-1}d_i +(-1)^p(d_ps_p- d_{p+1}s_p + \sum_{i=p+2}^{n+1}(-1)^is_pd_{i-1} \stackrel{(4')}{=} $$
$$\sum_{i=0}^{p-1}(-1)^is_{p-1}d_i \sum_{i=p+2}^{n+1}(-1)^is_pd_{i-1} \in C^D_{n-1},$$ where the (4') is the 
condition:
$$(4')\  \ d_ps_p = d_{p+1}s_p \mbox{ for any $p\leq n$.}$$
If $(C_n,d_i,s_i)$ satisfies conditions (1),(2),(3), and (4') it is called a weak simplicial module \cite{Prz-5}.
As condition (2) was not use in calculation it is useful also to consider $(C_n,d_i,s_i)$ satisfying the 
condition (1),(3), and (4'), we call this a weak-pseudo-simplicial module.

We strengthen the above calculation by considering the sequence of modules 
$ F^i_n=span(s_0(C_{n-1}),s_1(C_{n-1}),...,s_{i}(C_{n-1}))$ and the filtration:
$$ 0\subset F^0_n \subset F^1_n \subset ... \subset F^{n-1}_n=C^D_n.$$
Our calculation gives $\partial_n(s_p(C_{n-1}))\subset span (s_{p-1}(C_{n-1}),s_{p}(C_{n-1}))$ and 
consequently:
\begin{corollary}\label{Corollary 3.14}
If $(C_n,d_i,s_i)$ is a weak-pseudo-simplicial module then  $\partial_n$ is filtration preserving, that 
is $\partial_n(F^p_n) \subset F^{p-1}_n$.
\end{corollary}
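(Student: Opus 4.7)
The plan is to read off the corollary directly from the boundary-formula computation that immediately precedes it in the text.

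First, I would invoke the explicit identity derived above,
$$\partial_n s_p \;=\; \sum_{i=0}^{p-1}(-1)^i\, s_{p-1} d_i \;+\; \sum_{i=p+2}^{n+1}(-1)^i\, s_p d_{i-1},$$
whose derivation used only axioms (1), (3), and the weak condition (4')---precisely the axioms that make $(C_n,d_i,s_i)$ weak-pseudo-simplicial. Axiom (2), $s_i s_j = s_{j+1} s_i$, is never invoked, which is exactly why the weak-pseudo-simplicial hypothesis suffices here and is sharp.

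Next I would observe that every summand on the right-hand side has the shape $s_j \circ d_i$ with $j \in \{p-1,p\}$. Consequently
$$\partial_n\bigl(s_p(C_{n-1})\bigr) \;\subset\; \mathrm{span}\bigl(s_{p-1}(C_{n-2}),\, s_p(C_{n-2})\bigr) \;\subset\; F^p_{n-1}.$$
Since $F^p_n$ is by construction $s_0(C_{n-1}) + s_1(C_{n-1}) + \cdots + s_p(C_{n-1})$, the same computation applied with $p$ replaced by any $q \le p$ gives $\partial_n\bigl(s_q(C_{n-1})\bigr) \subset F^q_{n-1} \subset F^p_{n-1}$, and summing over $q \le p$ yields the filtration-preservation conclusion.

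No serious obstacle is expected: the substantive calculation has already been carried out above the corollary, so the proof reduces to bookkeeping. The only point requiring care is keeping the two indices straight---$\partial_n$ lowers the homological dimension by one while preserving the filtration level---and confirming that axiom (2) is genuinely unused in the derivation of the boundary formula, so that the statement really holds under the weak-pseudo-simplicial hypothesis rather than the stronger weak-simplicial one.
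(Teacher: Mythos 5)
Your proof is correct and follows the paper's own route exactly: the corollary is read off from the identity $\partial_n s_p=\sum_{i=0}^{p-1}(-1)^i s_{p-1}d_i+\sum_{i=p+2}^{n+1}(-1)^i s_p d_{i-1}$, whose derivation uses only (3) and (4') (axiom (2) being unused is precisely the paper's reason for introducing the weak-pseudo-simplicial notion), followed by the bookkeeping over $q\le p$. Your reading of ``filtration preserving'' as $\partial_n(F^p_n)\subset F^p_{n-1}$ is the intended one---the indices in the printed statement are off by a typo---so nothing further is needed.
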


We will prove now the Eilenberg-Mac Lane theorem that the degenerate complex $(C^D_n,\partial_n)$ is acyclic,
watching on the way which axioms are used.  For a filtration $(F^p_n)$  the associated graded module
is defined to be $\{Gr^p_n= F^p_n/F^{p-1}_n\}$. We prove first that the chain complex $\{Gr^p_n\}$ is acyclic 
for any $p$.

\begin{lemma} Let $(C_n,d_i,s_i)$ satisfies the conditions (1), (2''), (3), and (4) where 
$$\mbox{(2'') \ } s_{p-1}s_{p-1} = s_ps_{p-1} \mbox{ for every $0 < p \leq n$}.$$ 
We call such $(C_n,d_i,s_i)$ a co-almost-simplicial module. Then the chain complex  
$\{Gr^p_n= F^p_n/F^{p-1}_n\}$ is acyclic, in particular $H_n(\{Gr^p_n\}) = 0$, and homology of 
$F^p_n$ and $F^{p-1}_n$ are isomorphic.

\end{lemma}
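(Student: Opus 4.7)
The plan is to exhibit an explicit chain contraction of $Gr^p_\bullet$, mimicking the Eilenberg--Mac Lane argument for the acyclicity of the degenerate subcomplex of an ordinary simplicial module. The candidate homotopy is
$$h\colon Gr^p_n\to Gr^p_{n+1},\qquad h[x]:=(-1)^p[s_p(x)],$$
and the proof amounts to (a) verifying that $h$ descends to the quotient, and (b) checking the identity $\bar\partial h+h\bar\partial=\mathrm{id}$.

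First I would compute the induced boundary on $Gr^p_n$. Starting from the expansion of $\partial_n s_p(c)$ derived just before the corollary, the $i<p$ terms lie in $s_{p-1}(C_{n-2})\subset F^{p-1}_{n-1}$ and so vanish in the quotient, the $i=p,p+1$ terms cancel by axiom (4), and the remaining $i>p+1$ terms collect into the image of $s_p$. Thus in $Gr^p_{n-1}$,
$$\bar\partial_n[s_p(c)]\;=\;-\bigl[s_p(\delta^p c)\bigr],\qquad \delta^p:=\sum_{j=p+1}^{n-1}(-1)^j d_j.$$

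Second, well-definedness. Because $s_p(C_n)\subset F^p_{n+1}$, the class $[s_p(x)]\in Gr^p_{n+1}$ is always defined; independence of the chosen representative reduces to the inclusion $s_p(F^{p-1}_n)\subset F^{p-1}_{n+1}$. For a spanning element $s_i(c)$ with $i\le p-1$, the critical case $i=p-1$ is exactly axiom (2''): it forces $s_p s_{p-1}(c)=s_{p-1}s_{p-1}(c)\in s_{p-1}(C_n)\subset F^{p-1}_{n+1}$. The residual cases $i<p-1$ are then obtained by combining (2'') with the face/degeneracy commutation of (3); this step is where the weak axiom (2'') does its essential work and is, to my mind, the main technical obstacle in the argument.

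Finally, the homotopy identity. Applying the boundary formula of step one to $[s_p(s_p c)]$, and using $d_{p+1}s_p=\mathrm{id}$ (axiom (4)) to extract the leading term, yields
$$\bar\partial[s_p s_p(c)]\;=\;(-1)^p[s_p(c)]\;+\;[s_p s_p\,\delta^p(c)].$$
Combined with $h\bar\partial[s_p(c)]=(-1)^{p+1}[s_p s_p\,\delta^p(c)]$, the two $s_p s_p\,\delta^p$-terms cancel and one is left with $(\bar\partial h+h\bar\partial)[s_p(c)]=[s_p(c)]$, as required. Consequently $Gr^p_\bullet$ is acyclic, and in particular $H_n(\{Gr^p_n\})=0$. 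The asserted isomorphism $H_\bullet(F^{p-1})\cong H_\bullet(F^p)$ follows from the long exact homology sequence associated with the short exact sequence of chain complexes $0\to F^{p-1}_\bullet\to F^p_\bullet\to Gr^p_\bullet\to 0$.
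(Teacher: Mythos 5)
Your steps 1 and 3 are correct and are in substance the paper's own argument: the paper proves, in a single chain-level computation, that $(\partial s_p+s_p\partial)s_p\equiv(-1)^p s_p$ modulo $s_{p-1}(C_{n-1})$, using (3), $d_ps_p=d_{p+1}s_p$, the instance (2'') and $d_{p+1}s_p=Id$, and then gets $H_n(F^{p-1})\cong H_n(F^p)$ from the long exact sequence of $0\to F^{p-1}\to F^p\to Gr^p\to 0$, exactly as you do; your $\bar\partial[s_p(c)]=-[s_p(\delta^p c)]$ bookkeeping is just a repackaging of that computation.

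The genuine gap is in your step 2, precisely at the cases you call ``residual''. For $i=p-1$ the inclusion $s_ps_{p-1}(C_{n})\subset F^{p-1}_{n+1}$ is indeed (2''). But the claim that the cases $i<p-1$ follow ``by combining (2'') with the face/degeneracy commutation of (3)'' is not an argument, and I do not see how it could become one: the relations (3) and (4) each contain a face map, so they give no way to rewrite the pure degeneracy composite $s_ps_i$, and (2'') only concerns the adjacent pair at level $p$. What is actually needed is $s_ps_i=s_is_{p-1}$ for $i\le p-2$, i.e.\ further instances of the full axiom (2) --- which is exactly what the hypothesis (1),(2''),(3),(4) withholds. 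So, as written, this step fails (it is vacuous for $p=0$, covered by (2'') for $p=1$, and immediate if one assumes the full axiom (2), i.e.\ a weak simplicial module satisfying (4)). You have in fact isolated a point the paper passes over in silence: the paper verifies the homotopy identity only on the representatives $s_p(C_{n-1})$ and never checks that $s_p$ descends to $Gr^p_n$, i.e.\ that $s_p(F^{p-1}_n)\subset F^{p-1}_{n+1}$; note that restricting $h$ to representatives of the form $s_p(x)$ does not evade the problem either, since $[s_px]=[s_px']$ with $s_p(x-x')\in F^{p-1}_n$ still forces you to show $s_ps_p(x-x')\in F^{p-1}_{n+1}$. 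To repair your write-up you must either produce an actual derivation of $s_ps_i(C_n)\subset F^{p-1}_{n+1}$ for $i<p-1$ from the stated axioms, or strengthen the hypothesis by the missing instances of (2).
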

\begin{proof} The classical idea of Eilenberg and Mac Lane is to use the degenerate map $s_p$ as a chain homotopy 
(we follow \cite{Lod-1}):\\
It suffices to show that $(\partial s_p +s_p\partial)s_p =(-1)^ps_p $ modulo $s_{p-1}C_{n-1}$, so the 
map $s_p$ is a chain homotopy between $(-1)^pId$ and the zero map on $Gr^p_n$. 
In the calculation we stress which axioms are used:
$$(\partial s_p +s_p\partial)s_p = (\sum_{i=0}^{n+1}(-1)^id_is_p +s_p\sum_{i=0}^{n}(-1)^id_i)s_p=$$
$$ \sum_{i=0}^{p-1}(-1)^id_is_ps_p +(-1)^p(d_ps_p -d_{p+1}s_p)s_p +  \sum_{i=p+2}^{n+1}(-1)^id_is_ps_p +$$
$$\sum_{i=0}^{p-1}(-1)^is_pd_is_p + (-1)^ps_p(d_ps_p -d_{p+1}s_p) +\sum_{i=p+2}^{n}(-1)^is_pd_is_p  \stackrel{(3)}{=}$$
$$ \sum_{i=0}^{p-1}(-1)^is_{p-1}s_{p-1}d_i + (-1)^p(d_ps_p -d_{p+1}s_p)s_p + \sum_{i=p+2}^{n+1}(-1)^is_pd_{i-1}s_p +$$
$$\sum_{i=0}^{p-1}(-1)^is_ps_{p-1}d_i + (-1)^ps_p(d_ps_p -d_{p+1}s_p) +\sum_{i=p+2}^{n}(-1)^is_pd_is_p  = $$
$$\sum_{i=0}^{p-1}(-1)^is_{p-1}s_{p-1}d_i + (-1)^p(d_ps_p -d_{p+1}s_p)s_p + (-1)^ps_pd_{p+1}s_p +$$
$$\sum_{i=0}^{p-1}(-1)^is_ps_{p-1}d_i + (-1)^ps_p(d_ps_p -d_{p+1}s_p) \stackrel{d_ps_p=d_{p+1}s_p}{=}$$
$$\sum_{i=0}^{p-1}(-1)^is_{p-1}s_{p-1}d_i + \sum_{i=0}^{p-1}(-1)^is_ps_{p-1}d_i +(-1)^ps_pd_{p+1}s_p \stackrel{(2)}{=}$$
$$2\sum_{i=0}^{p-1}(-1)^is_{p-1}s_{p-1}d_i +(-1)^ps_pd_{p+1}s_p \stackrel{\mod 2s_{p-1}M_{n-1}}{=}$$
$$(-1)^ps_pd_{p+1}s_p \stackrel{d_{p+1}s_p=Id}{=} (-1)^ps_p $$
Now consider the short exact sequence $0 \to F^{p-1} \to  F^p \to  F^p/F^{p-1} \to 0$ 
and the corresponding long exact sequence of
homology: 
$$...\to H_{n+1}(F^p/F^{p-1}) \to H_n(F^{p-1}) \to H_n(F^p) \to H_n(F^p/F^{p-1})\to ...$$
Thus because homology of $ F^p/F^{p-1}$ is trivial we obtain isomorphism  $H_n(F^{p-1}) \to H_n(F^p)$.
In conclusion, by induction on $p$ we get the Eilenberg-Mac Lane result: $H^D_n(C)=0$.
\end{proof}
From our  proof follows that working modulo $2s_{p-1}M_{n-1}$ , e.g. modulo $2$, gives directly $H_n(F^p)=0$.
Also from axiom (2) we took only $s_ps_{p-1}=s_{p-1}s_{p-1}$ that is axiom (2'').

The above consideration do not work  for a distributive case (the axiom (4) usually does not hold as explained 
in Section \ref{Section 6} (see \cite{CPP,Pr-Pu-1,P-S}. 
We proved however that the degenerate part of quandle homology can be 
obtained from the normalized one via K\"unneth type formula, see \cite{Pr-Pu-2}).

\subsection{Bicomplex for a degenerate subcomplex $C^D_n$}\label{Subsection 3.6}
One more important observation follows from our calculations. If $(C_n,d_i,s_i)$ is a weak simplicial 
module\footnote{In fact a pseudo weak simplicial module suffices, i.e. conditions (1),(3),(4').}
 (i.e. conditions (1)-(3),(4') hold then the formula 
$$\partial_ns_p =\sum_{i=0}^{p-1}(-1)^is_{p-1}d_i + \sum_{i=p+2}^{n}(-1)^is_{p}d_{i-1}$$
allows us to define a bicomplex with entries $E^0_{p,q}=Gr_{n,p}=F_n^p/F_n^{p-1}$, $n=p+q$, with horizontal and 
vertical  boundary operation: $\partial_{p,q}^h= \sum_{i=0}^{p-1}(-1)^i$ and 
$\partial_{p,q}^v=\sum_{i=p+2}^{n}(-1)^is_{p}d_{i-1}$ with 
$\partial^h_{p,q-1}\partial^v_{p,q}=-\partial^v_{p-1,q}\partial^h_{p,q}: E^0_{p,q} \to E^0_{p-1,q-1}$; see 
Figure 3.1.

\begin{displaymath}
\begin{array}{ccccccl}
&\downarrow \partial^v& & \downarrow \partial^v  &  & \downarrow \partial^v & \\
\ldots \stackrel{d^h}{\leftarrow} & E^0_{p-1,q+1} & \stackrel{\partial^h}{\leftarrow} &E^0_{p,q+1} & 
\stackrel{\partial^h}{\leftarrow} & E^0_{p+1,q+1} & \stackrel{\partial^h}{\leftarrow}\ldots \\
 &\downarrow \partial^v& & \downarrow \partial^v  &  & \downarrow \partial^v & \\
\ldots \stackrel{\partial^h}{\leftarrow} & E^0_{p-1,q} & \stackrel{\partial^h}{\leftarrow} &E^0_{p,q} & 
\stackrel{\partial^h}{\leftarrow} &
E^0_{p+1,q} & \stackrel{\partial^h}{\leftarrow}\ldots \\
 &\downarrow \partial^v& & \downarrow \partial^v  &  & \downarrow \partial^v & \\
\ldots \stackrel{\partial^h}{\leftarrow} & E^0_{p-1,q-1} & \stackrel{\partial^h}{\leftarrow} & E^0_{p,q-1} & 
\stackrel{\partial^h}{\leftarrow} & E^0_{p+1,q-1} & \stackrel{\partial^h}{\leftarrow}\ldots \\
 &\downarrow \partial^v& & \downarrow \partial^v  &  & \downarrow \partial^v & \\
\end{array}
\end{displaymath}
\centerline{Figure 3.1; Bicomplex $(E^0_{p,q},\partial^v,\partial^h)$}

The bicomplex $(E^0_{p,q},\partial^v,\partial^h)$ yields a spectral sequence, in fact two
spectral sequences: starting from  columns, that is 
${}^cE^1_{pq}=\frac{\ker(\partial^v:E^0_{pq}\to E^0_{p,q-1})}{im (\partial^v:E^0_{p,q+1}\to E^0_{pq})}$,
and the spectral sequence starting from rows:
${}^rE^1_{pq}=\frac{\ker(\partial^h:E^0_{pq}\to E^0_{p-1,q})}{im(\partial^h:E^0_{p+1,q}\to E^0_{pq})}$
which can be used to analyze homology of $Gr(C^D)$ and $(C_n)$, see \cite{Pr-Pu-2} for an application in 
the distributive case.
 
\subsection{Homology with coefficients in a $k-$Mod functor}\label{Subsection 3.7}

Each individual abstract simplicial complex $K=(V,{\mathcal P})$ is a small category\footnote{Category is 
called small if objects form a set.}  with simplexes as objects 
and inclusions of simplexes, $s\subset s'$, as morphisms. As usually $K^{op}$ will denote the opposite category 
so restrictions, $s\supset s'$, are morphisms, more precisely $Mor_{K^{op}}(s,s')$ is empty 
if $s$ does not contain $s'$ and 
otherwise $Mor_{K^{op}}(s,s')$ has one morphism denoted by $(s\supset s')$. 
Now for any (covariant) functor ${\mathcal F}: K^{op}\to k-$Mod, where $k$-Mod is a category of 
modules over a commutative ring $k$, we can define oriented homology 
$H^{ori}_n(K,{\mathcal F})$ of an abstract simplicial complex $K$ with coefficients in ${\mathcal F}$,
as follows:
\begin{definition}\label{Definition 3.6}
 Let $K=(V,P)$ be an abstract simplicial complex with ordered vertices\footnote{It suffices 
to have $V$ partially ordered  as long as for any simplex $s=(x_0,...,x_n)$ the partial order on $V$ 
restricts to linear order on vertices of $s$. Even better we do not need a partial order, it suffices that 
vertices of every simplex are ordered in such a way that if $s_1 \subset s_2$ then the ordering of vertices 
of $s_1$ is a restriction of the ordering of vertices of $s_2$.}
 and ${\mathcal F}: K^{op}\to k-$Mod a functor. We define the presimplicial 
module $(C^{ori}_n(K,{\mathcal F}),d_i)$ as follows:
$$C^{ori}_n(K,{\mathcal F})= \bigoplus_{dim(s)=n}{\mathcal F}(s)$$
the face map $d_i:C^{ori}_n(K,{\mathcal F})\to C^{ori}_{n-1}(K,{\mathcal F})$ is defined by 
$$d_i = {\mathcal F}(s\supset (s-x_i)) \mbox{ where } s=(x_0,...,x_n) \mbox{ and } x_i < x_{i+1}$$
as usually for presimplicial modules $\partial_n= \sum_{i=1}^n(-1)^id_i$ and 
 $(C^{ori}_n(K,{\mathcal F}),\partial_i)$ is a chain complex whose homology is denoted by $H^{ori}_n(K,{\mathcal F})$.
\end{definition}
The above definition can be thought of as a twisted version of an oriented  homology of abstract simplicial complexes.
Similarly we can define ordered homology of $(K,{\mathcal F})$ but oriented and ordered homology with coefficient 
in a functor are not necessarily isomorphic.

Definition \ref{Definition 3.6} is related to more general Definition \ref{Definition 3.7} 
on homology of a small category with a functor coefficient,
 usually thought to be first given by \cite{Wat}, who in turn refers to the earlier paper \cite{Dehe} 
 in the case of the category of posets.

\begin{definition}\label{Definition 3.7}
Let ${\mathcal P}$ be as small category (i.e. objects, $P=Ob({\mathcal P)}$ form a set), 
and let ${\mathcal F}:{\mathcal P} \to k$-Mod
be a functor from ${\mathcal P}$ to the category of modules over a commutative ring $k$.
We call the sequence of objects and functors, $x_0 \stackrel{f_0}{\to} x_1 \stackrel{f_1}{\to} \ldots
\stackrel{f_{n-1}}{\to} x_n$ an $n$-chain  (more formally $n$-chain in the nerve of the category).
We define the chain complex $C_*({\mathcal P},{\mathcal F})$ as follows:
$$ C_n= \bigoplus_{x_0 \stackrel{f_0}{\to} x_1 \stackrel{f_1}{\to} \ldots \stackrel{f_{n-1}}{\to} x_n} {\mathcal F}(x_0)$$
where the sum is taken over all $n$-chains.

The boundary operation $\partial_n: C_n({\mathcal P},{\mathcal F}) \to C_{n+1}({\mathcal P},{\mathcal F})$ is an
alternative sum of  face maps, $\partial_n=\sum_{i=0}^n (-1)^id_i$, where $d_i$ are given by:
$$d_0(\lambda;x_0\stackrel{f_0}{\to} x_1 \stackrel{f_1}{\to} \ldots \stackrel{f_{n-1}}{\to} x_n)=
({\mathcal F}(x_0\stackrel{f_0}{\to} x_1)(\lambda);  x_1 \stackrel{f_1}{\to} \ldots \stackrel{f_{n-1}}{\to} x_n),$$
$$d_i(\lambda;x_0\stackrel{f_0}{\to} x_1 \stackrel{f_1}{\to} \ldots \stackrel{f_{n-1}}{\to} x_n)=
(\lambda;x_0\stackrel{f_0}{\to} x_1 \stackrel{f_1}{\to}\ldots \to
x_{i-1}\stackrel{f_if_{i-1}}{\to} x_{i+1}\to \ldots \stackrel{f_{n-1}}{\to} x_n)$$ for $0<i<n$, and 
$$d_n(\lambda;x_0\stackrel{f_0}{\to} x_1 \stackrel{f_1}{\to} \ldots \stackrel{f_{n-1}}{\to} x_n)=
(\lambda;x_0\stackrel{f_0}{\to} x_1 \stackrel{f_1}{\to}\ldots \stackrel{f_{n-2}}{\to} x_{n-1}).$$
We denote by $H_n({\mathcal P},{\mathcal F})$ the homology yielded by the above chain complex, and call this 
the homology of a small category ${\mathcal P}$ with coefficients in a functor ${\mathcal F}$.\\
Similarly, if ${\mathcal F}': {\mathcal P} \to k-$Mod is a contravariant functor we may define 
a homology $H_n({\mathcal P};{\mathcal F}')$, starting from 
$$C_n({\mathcal P};{\mathcal F}')= 
\bigoplus_{x_0 \stackrel{f_0}{\to} x_1 \stackrel{f_1}{\to} \ldots \stackrel{f_{n-1}}{\to} x_n} {\mathcal F}'(x_n).$$
In particular, $d_n(x_0\stackrel{f_0}{\to} x_1 \stackrel{f_1}{\to} \ldots \stackrel{f_{n-1}}{\to} x_n;\lambda)=\\
(x_0\stackrel{f_0}{\to} x_1 \stackrel{f_1}{\to}\ldots \stackrel{f_{n-2}}{\to} x_{n-1});
{\mathcal F}'(x_{n-1}\stackrel{f_{n-1}}{\to} x_n)(\lambda))$, where $\lambda \in {\mathcal F}'(x_n)$.\\
One can also consider both functors, ${\mathcal F}$ and ${\mathcal F}'$ in the definition starting from
$$C_n({\mathcal P};{\mathcal F},{\mathcal F}')= 
\bigoplus_{x_0 \stackrel{f_0}{\to} x_1 \stackrel{f_1}{\to} \ldots \stackrel{f_{n-1}}{\to} x_n} 
{\mathcal F}'(x_n)\otimes {\mathcal F}(x_0);$$
compare Definition 4.6. We can also start from from a bifunctor 
${\mathcal D}: {\mathcal P}^{op}\times {\mathcal P} \to k-Mod$ 
and mimic the definition of the Hochschild homology (Section 5) \cite{Lod-1}.

\end{definition}

\begin{remark}\label{Remark 3.8} 
Any subcategory ${\mathcal P}'$ of ${\mathcal P}$ has its chain complex, and homology (we use the
functor ${\mathcal F}'= {\mathcal F}/{\mathcal P'}$, that is, the restriction of ${\mathcal F}$ to ${\mathcal P'}$).
$C_*({\mathcal P'},{\mathcal F}')$ is a subchain complex of $C_*({\mathcal P},{\mathcal F})$
so we can consider the short exact sequence of chain complexes:
$$0 \to C_n({\mathcal P},{\mathcal F}) \to C_n({\mathcal P}',{\mathcal F}')\to 
C_n({\mathcal P},{\mathcal F})/C_n({\mathcal P}',{\mathcal F}') \to 0$$
and yielded by it the long exact sequence of homology. 
\end{remark}

The pair $(C_n,d_i$) form a presimplicial module by associativity of morphisms of a category and properties of 
a functor. More generally we have:
\begin{proposition}\label{Proposition 3.9}
 Let $s_i:C_n \to C_{n+1}$ be a map inserting identity morphism on the $i$th place in 
the $n$th chain of of the nerve of the category, that is 
$$s_i((\lambda;x_0\stackrel{f_0}{\to} \ldots \stackrel{f_{n-1}}{\to} x_n)= 
(\lambda;x_0\stackrel{f_0}{\to} x_i \stackrel{Id_{x_i}}{\to} x_i \ldots \stackrel{f_{n-1}}{\to} x_n).$$
Then $(C_n,d_i,s_i)$ is a simplicial module.
\end{proposition}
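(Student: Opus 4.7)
The plan is to verify the four simplicial identities of Definition~\ref{Definition 3.3} for the system $(C_n,d_i,s_i)$ one by one, reducing each to (a)~associativity of composition in $\mathcal{P}$, (b)~functoriality of $\mathcal{F}$, and (c)~the identity property $\mathcal{F}(\mathrm{Id}_x)=\mathrm{Id}_{\mathcal{F}(x)}$. Since the chain complex decomposes over $n$-chains in the nerve and the maps act on a single summand at a time, it suffices to check the identities on a generator $(\lambda;x_0\stackrel{f_0}{\to}\cdots\stackrel{f_{n-1}}{\to}x_n)$.

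First I would verify the face relation $d_id_j=d_{j-1}d_i$ for $i<j$ by case analysis on whether the indices lie in the interior $\{1,\dots,n-1\}$ or at the endpoints $0$ and $n$. If both $i,j$ are interior and $j>i+1$, the two faces independently compose distinct consecutive pairs of morphisms and manifestly commute. The delicate case is $j=i+1$ with both interior, where one must check that composing first $f_{i+1}f_i$ and then $f_{i+2}(f_{i+1}f_i)$, versus first $f_{i+1}f_i$ then $(f_{i+1}f_i)f_{i-1}$ etc., yields the same chain; this is exactly associativity of composition in $\mathcal{P}$. When $i=0$ one additionally uses functoriality of $\mathcal{F}$: the identity $\mathcal{F}(f_1\circ f_0)=\mathcal{F}(f_1)\circ\mathcal{F}(f_0)$ reconciles $d_0d_1$ with $d_0d_0$ (applied in the opposite order, with index shift). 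The cases involving $d_n$ are symmetric but simpler since $d_n$ merely truncates the last morphism without touching the $\mathcal{F}$-coefficient.

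Next I would check the degeneracy relation $s_is_j=s_{j+1}s_i$ for $i\leq j$: both sides insert two identity morphisms into the chain, and a direct bookkeeping on where each identity lands (the earlier insertion shifts the later insertion's index by one) shows they agree. The mixed relation $d_is_j$ in its three cases follows by the same bookkeeping: away from the inserted identity, a face map either acts before or after the insertion and one simply tracks the index shift; the functor $\mathcal{F}$ is only invoked when $i=0$, and then $\mathcal{F}$ applied to an identity morphism is the identity on the coefficient, which is compatible with the stated formula. Finally, the collapse relation $d_is_i=d_{i+1}s_i=\mathrm{Id}$ is the heart of the matter: inserting $\mathrm{Id}_{x_i}$ at position $i$ and then removing either of the two copies of $x_i$ yields back the original chain, provided that composing with $\mathrm{Id}_{x_i}$ returns the adjacent morphism unchanged (associativity/identity in $\mathcal{P}$) and that $\mathcal{F}(\mathrm{Id}_{x_i})$ acts as the identity on $\mathcal{F}(x_i)$ when $i=0$.

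The main (purely bookkeeping) obstacle is the careful treatment of the boundary indices $i=0$ and $i=n$, where the face maps $d_0$ and $d_n$ behave differently from the interior ones: $d_0$ transports the coefficient $\lambda$ via $\mathcal{F}(f_0)$, while $d_n$ only drops the last object. One must therefore check that the simplicial identities involving these boundary face maps still hold, and this is precisely where functoriality of $\mathcal{F}$ (both the composition and identity axioms) enters. Once each of the four families of identities is verified on a generator in all index cases, the proposition follows by linearity.
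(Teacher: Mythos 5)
Your proof is correct and follows exactly the route the paper intends: the paper states the result without a written-out proof, remarking only that the (pre)simplicial identities follow from associativity of composition in the category and the functoriality of $\mathcal{F}$, which is precisely the generator-by-generator verification you outline (including the correct use of $\mathcal{F}(f_1\circ f_0)=\mathcal{F}(f_1)\circ\mathcal{F}(f_0)$ for the $d_0d_1=d_0d_0$ case and $\mathcal{F}(\mathrm{Id}_{x_0})=\mathrm{Id}$ for $d_0s_0=\mathrm{Id}$). Nothing is missing.
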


The classical example is the homology of a simplicial complex with constant coefficients, that
is ${\mathcal F}(s)= k$ and ${\mathcal F}(f)=Id_k$; in that case we write $H_n(K,{\mathcal F})= H_n(K,k)$ or just 
$H_n(K)$ if $k=\Z$. Related to this example is homology of posets: if ${\mathcal P}$ is a small category
and for any objects $x$ and $y$, $Mor(x,y)$ has at most one element and additionally if $Mor(x,y)\neq \emptyset$
and $Mor(y,x)\neq \emptyset$ then $x=y$, then ${\mathcal P}$ is a poset with $x\leq y$ if{f} $Mor(x,y)\neq \emptyset$.

Another classical example concerns homology of groups, where the category has one object and $G$ morphisms
(interpreted as multiplication by elements of $G$ \cite{Bro}, that is the morphism $g: G\to G$ 
is given by $g(h)=hg$); compare Section 4.

More recent example is motivated by Khovanov homology so we call a related functor ${\mathcal F}_{(D,A,M)}$, 
a Khovanov functor. The functor depends on a choice of an $k$-Frobenius algebra $A$, $A$-Frobenius bimodule $M$ and
a link diagram (possibly virtual link, or a link diagram on a surface, $L$ (equivalently we can
work with graphs on a surface). Here for simplicity we assume that $M=A$ is an abelian Frobenius 
$k$-algebra\footnote{$A$ is a $k$ module with associative and commutative multiplication, $\mu$, 
with co-associative and co-commutative co-multiplication, $\Delta$, satisfying the Frobenius 
condition, that is
$\Delta\mu =  (\mu \times Id)(Id\times \Delta)$; graphically: {\parbox{0.9cm}{\psfig{figure=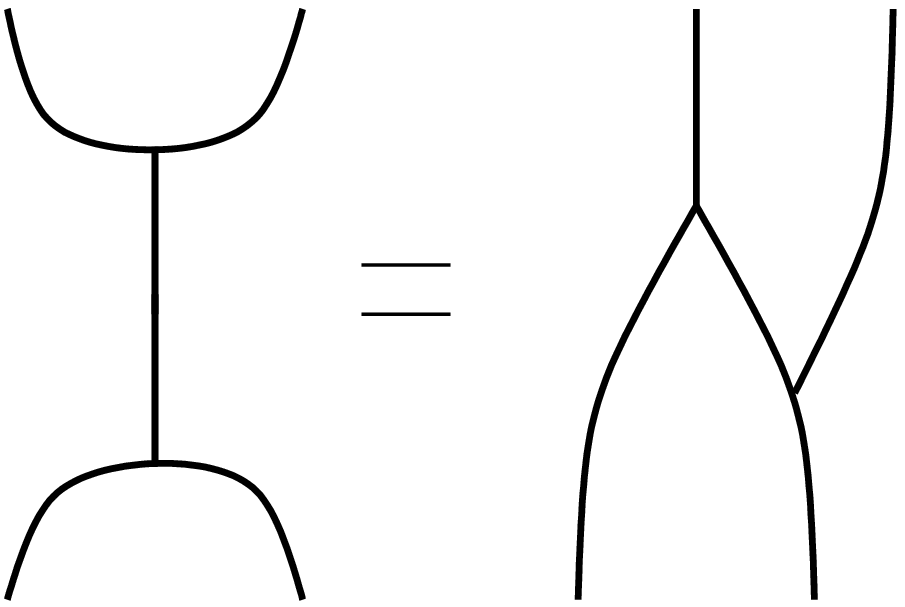,height=0.6cm}}}.
There is no need for unit and counit.}
and $D$ is a classical link diagram.
\begin{definition}\label{Definition 3.10}
For a link diagram $D$, let $V$ be the set of its crossings (in some order), and $P=2^V$.
Thus $K=(V,P)$ is a simplex (we allow also the empty, $-1$-dimensional simplex). Let $A$ be a Frobenius algebra with
a multiplication $\mu$ and a co-multiplication $\Delta$ (e.g. $A=Z[x]/(x^m)$, $\Delta(1)=\sum_{i+j=m-1}x^i\otimes x^j$).
We define a functor ${\mathcal F}_{D,A}: K \to k$-Mod as follows. For any $s\in P$ we identify $s$ with
a Kauffman state, where $s(v)=1$ (i.e. {\parbox{0.9cm}{\psfig{figure=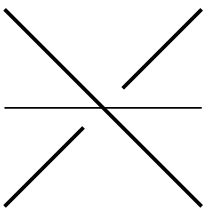,height=.5cm}}})
iff $v\in s$. We denote by $D_s$ the collection of circles obtained
from $D$ by smoothing it according to $s$, and by $|D_s|$ the number of circles in $D_s$.
Then we define ${\mathcal F}(s)= A^{\otimes |D_s|}$. To define ${\mathcal F}(s\supset (s-v_i))$ we first
decorate circles of $D_s$ by algebra $A$, (that is each circle by one copy of $A$); then we have two cases:\\
($\mu$) $|D_{s-v_i}|=  |D_s|-1$, thus two circles are glued together when we switch the state at $v_i$.
In this case we multiply the element associated with glued circles (we use the fact that $A$ is commutative).\\
($\Delta$) $|D_{s-v_i}|=  |D_s|+1$, thus a circle of $D_s$ is split so we apply to the element of $A$
associated to this circle a co-multiplication (we use the fact that $\Delta$ is co-commutative).
${\mathcal F}$ is a functor as $A$ is a commutative Frobenius algebra.
\end{definition}
This approach to Khovanov homology was first sketched in \cite{Prz-4}, where Khovanov homology was 
connected to Hochschild homology.

It is a classical result that homology of a baricentric subdivision of an abstract simplicial complex 
is isomorphic to the homology of the complex. This was an ingredient of original proofs of topological 
invariance of homology. The generalization of the result holds also for 
a homology of a simplicial complex $\mathcal K$ with a coefficient in a functor and the 
homology of $\mathcal K$ treated as a small category. I was informed by S.~Betley and Jolanta S{\l}omi\'nska
about at least three proof of the fact, compare \cite{Slo}.
We are writing, with my student Jing Wang detailed survey with the  proof following 
closely the classical proof with constant coefficients (in essence it is another case of acyclic model theorem 
of Eilenberg and Zilber \cite{E-Z}).


In the next few sections we discuss homology related to various magmas (e.g. associative and right distributive)
and look for the common traits, for example presimplicial or simplicial module structure, geometric 
realization etc.

\section{Group homology of a semigroup}\label{Section 4.1}

In the homology of abstract simplicial complexes, the set of vertices, $X$, has no algebraic structure
or, as we will see later, we can treat $X$ as a magma with the trivial operation $*_0$ $, x*_0y=x$. 
We will now discuss homology of magma $(X,*)$ equipped with some specific structure, e.g. associativity,
Jacobi identity, or distributivity.

According to \cite{Bro}: {\it The cohomology theory of groups arose from both topological and algebraic sources.
The starting point for the topological aspect of the theory was the work of Hurewicz (\cite{Hur}, 1936
on ``aspherical spaces". About a year earlier, Hurewicz had introduced the higher homotopy groups
$\pi_n X$ of a space $X$ ($n\geq 2$). He now singled out for study those path-connected spaces $X$
whose higher homotopy groups are all trivial, but whose fundamental group
$\pi=\pi_1X$ need not be trivial. Such spaces are called aspherical. Hurewicz proved, among other things,
that the homotopy type of an aspherical pace $X$ is completely determined by its fundamental group $\pi$.
... Hopf (\cite{Hop}, 1942)... expressed $H_2\pi$ in purely algebraic terms...}

Let $(X,*)$ be a semigroup that is a set with associative binary operation. We associate with $(X,*)$ 
a presimplicial set, presimplicial module, chain complex, group homology and geometric realization as follows:

\begin{definition}\label{Definition 4.1}
\begin{enumerate}
\item[(i)] Let $X_n=X^n$ and $d_i:X_n \to X_{n-1}$ for $0\leq i \leq n $ is given by:
$$d_0(x_1,x_2,...,x_n)=(x_2,,...,x_n),$$
$$d_i(x_1,...,x_n)= (x_1,...,x_{i-1},x_i*x_{i+1},x_{i+2},...,x_n) \mbox{ for } 0<i<n,$$
$$d_0(x_1,...,x_{n-1},x_n)=(x_1,,...,x_{n-1}).$$
Then $(X_n,d_i)$ is a presimplicial set.
\item[(ii)] If we choose a commutative ring $k$ and consider $C_n=kX^n$ and $d_i: C_n \to C_{n-1}$ the unique 
extension of the map $d_i$ from (i) then $(C_n,d_i)$ is a presimplicial module.
\item[(iii)] If $\partial_n = \sum_{i=0}^n(-1)^id_i$, then $(C_n,\partial_n)$ is a chain complex;
its homology are called group homology of a semigroup $X$ and denoted by $H_n(X;k)$ or just 
$H_n(X)$ if $k=\Z$.
\item[(iv)] A presimplicial set has a standard geometric realization, $BX$ (as a CW-complex\footnote{$BX$ can be 
made into geometric simplicial complex by second baricentric subdivision because $BX$ by the construction 
is glued from simplexes (such a space is called a $\Delta$-complex in \cite{Hat}), 
see Section \ref{Section 13}, e.g. Definition \ref{Definition 13.1}}).
Thus the semigroup homology has a natural interpretation as a homology of a CW-complex \cite{Bro}.
\end{enumerate}
\end{definition}
Definition \ref{Definition 4.1} has a classical generalization, when a semigroup $(X;*)$ is 
augmented by  an $X$-right-semigroup-set $E$,
that is a set with the  right action (also denoted by $*$) of $X$ on $E$ such that $(e*a)*b=e*(a*b)$.
\begin{definition}\label{Definition 4.2}
\begin{enumerate}
\item[(i)] Let $X_n= E\times X^n$ and $d_i:X_n \to X_{n-1}$ for $0\leq i \leq n $ is given by:
$$d_0(e,x_1,x_2,...,x_n)=(e*x_1,x_2,,...,x_n),$$
$$d_i(e,x_1,...,x_n)= (e,x_1,...,x_{i-1},x_i*x_{i+1},x_{i+2},...,x_n) \mbox{ for } 0<i<n,$$
$$d_n(e,x_1,...,x_{n-1},x_n)=(e,x_1,,...,x_{n-1}).$$
Then $(X_n,d_i)$ is a presimplicial set.
\item[(ii)] For $C_n= k(E\times X^n)$, $(C_n,d_i)$ is a presimplicial module and $(C_n,\partial_n)$ is a chain complex
with homology denoted by $H_n(X,E)$ and geometric realization $B(X,E)$.
\end{enumerate}
If $E$ has one element then we get the case of Definition \ref{Definition 4.1}.
\end{definition}
Definition \ref{Definition 4.2} has further generalization if, in addition to a semigroup $(X;*)$ we have 
the right $X$-set $E_0$ and the left $X$ set $E_w$ (here we need ($a*(b*e)=(a*b)*e$), compare \cite{Ca-E}, Chapter X.
\begin{definition}\label{Definition A3.6}
\begin{enumerate}
\item[(i)] Let $X_n= E_0\times X^n\times E_w$ and $d_i:X_n \to X_{n-1}$ for $0\leq i \leq n $ is given by:
$$d_0(e_0,x_1,x_2,...,x_n,e_{n+1})=(e*x_0,x_2,,...,x_n,e_{n+1})),$$
$$d_i(e_0,x_1,...,x_n,e_{n+1})= (e_0,x_1,...,x_{i-1},x_i*x_{i+1},x_{i+2},...,x_n,e_{n+1}) \mbox{ for } 0<i<n,$$
$$d_n(e_0,x_1,...,x_{n-1},x_n,e_{n+1})=(e_0,x_1,...,x_{n-1},x_n*e_{n+1}).$$
Then $(X_n,d_i)$ is a presimplicial set. We call this pre-simplicial set a ``two walls" presimplicial set due to 
visualization in Figure 4.1.
\item[(ii)] For $C_n= k(E_0\times X^n\times E_w)$, $(C_n,d_i)$ is a presimplicial module 
and $(C_n,\partial_n)$ is a chain complex
with homology denoted by $H_n(X,E_0,E_w)$, and geometric realization $B(X,E_0,E_w)$.

\end{enumerate}
If $E_w$ has one element then we get the case of Definition \ref{Definition 4.2}
\end{definition}
\centerline{\psfig{figure=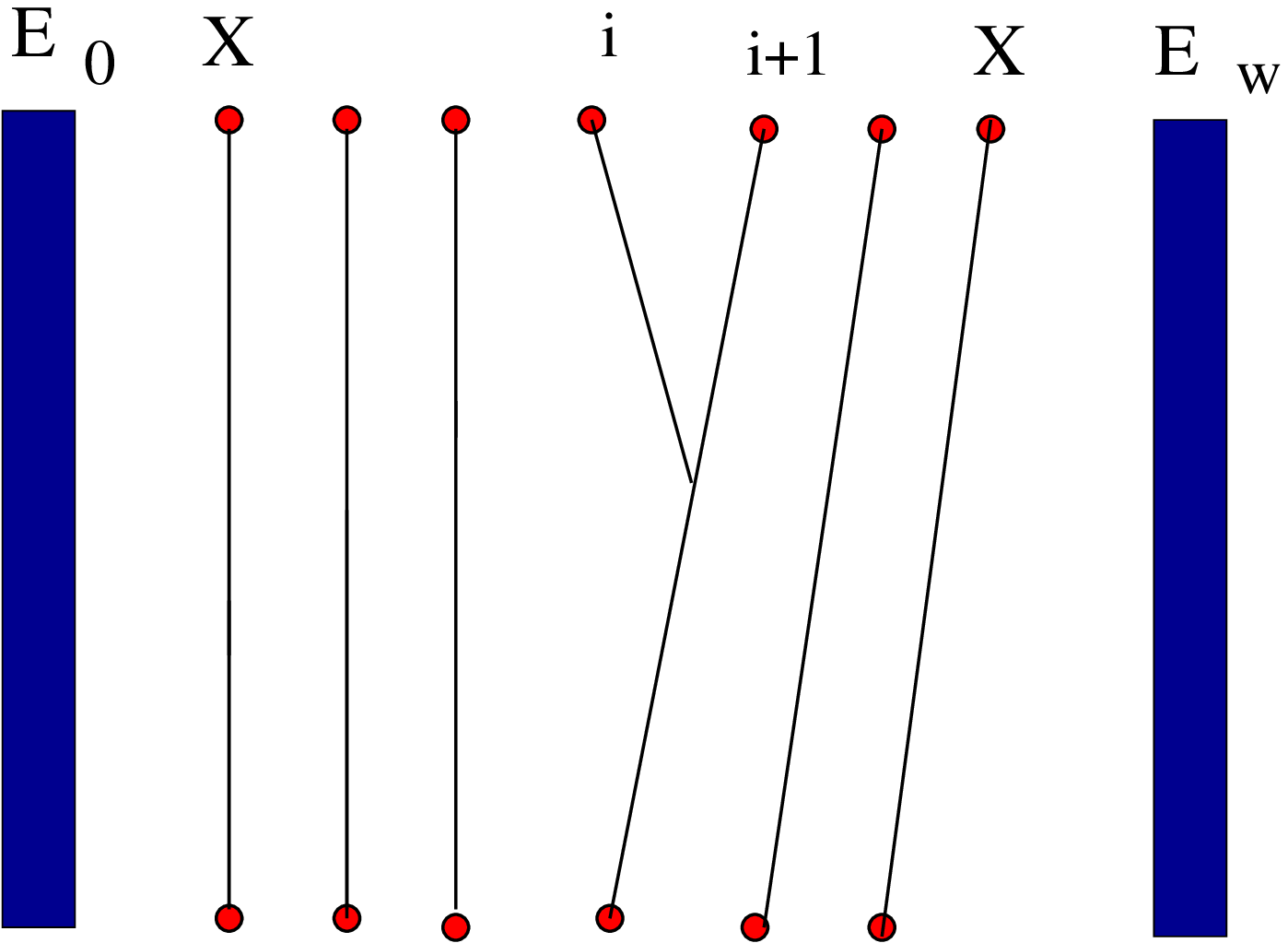,height=3.8cm}}\ \ \\
\centerline{Figure 4.1; i'th face map in ``two walls" presimplicial set for a semigroup}\ \\ \ \

A version of Definition \ref{Definition A3.6} when we assume that $E_0=E_w$ and $(e_1*x)*e_2=e_1*(x*e_2)$ that 
is $E_0$ is an $X$-biset leads to the Hochschild homology. In particular, for a semigroup $(X;*)$ we have:
\begin{definition}\label{Definition A3.7}
Let $(X;*)$ be a semigroup and $E$ an $X$-biset then:
\begin{enumerate}
\item[(i)]  Let $X_n= E\times X^n$ and $d_i:X_n \to X_{n-1}$ for $0\leq i \leq n $ is given by:
$$d_0(e,x_1,x_2,...,x_n)=(e*x_1,x_2,,...,x_n),$$
$$d_i(e,x_1,...,x_n)= (e,x_1,...,x_{i-1},x_i*x_{i+1},x_{i+2},...,x_n) \mbox{ for } 0<i<n,$$
$$d_n(e,x_1,...,x_{n-1},x_n)=(x_n*e,x_1,,...,x_{n-1}).$$
Then $(X_n,d_i)$ is a presimplicial set.
\item[(ii)] For $C_n= k(E\times X^n)$, $(C_n,d_i)$ is a presimplicial module and $(C_n,\partial_n)$ is a chain complex
with (Hochschild) homology denoted by $HH_n(X,E)$ and geometric realization $BH(X,E)$.
\item[(iii)] If $E_0$ is an $X$-right-semigroup-set and $E_w$ is an $X$-right-semigroup-set
then we can take $E= E_w\times E_0$ and $E$ has a natural structure of an $X$-biset.
Thus the concepts of a ``two-wall" semigroup homology and Hochschild semigroup homology are equivalent.
\end{enumerate}
\end{definition}

If $(X;*)$ is a monoid (with a unit element $1$) then we say that the set $E$ is $X$-right-monoid-set 
if it is  $X$-right-monoid-set and additionally $e*1=e$ for any $e\in E$ (that is $1$ acts trivially 
on $E$ from the right. Similarly we define $X$-left-monoid-set (e.g. $1*e=e$).
For a monoid  the presimplicial sets (modules) described in Definitions \ref{Definition 4.2}, \ref{Definition A3.6}, 
and \ref{Definition A3.7}
are in fact simplicial sets (modules) with the degeneracy maps $s_i$ placing $1$ between $x_i$ and $x_{i+1}$,
for example  in the case of \ref{Definition A3.6}:
$$s_0(e_0,x_1,x_2,...,x_n,e_{n+1})=(e_0,1,e_0,x_1,x_2,...,x_n,e_{n+1}),$$
$$s_i(e_0,x_1,x_2,...,x_n,e_{n+1})= (e_0,x_1,...,x_i,1,x_{i+1},...,x_n,e_{n+1}) \mbox{ for } 0<i<n,$$
$$s_n(e_0,x_1,x_2,...,x_n,e_{n+1})= (e_0,x_1,x_2,...,x_n,1,e_{n+1}).$$
 
\begin{example}\label{Example 4.5}
We can check that $ d_id_{i+1}=d_id_i$ ($0< i < n$)  if and only if $*$ is associative.\\
Furthermore, $d_0d_1=d_0d_0$ iff $(e_0*x_1)*x_2=e_0*(x_1*x_2)$ that is $E_0$ is an $X$-right-semigroup-set.\\
Similarly $d_{n-1}d_n=d_{n-1}d_{n-1}$ iff $E_w$ is an $X$-left-semigroup-set.
\end{example}

Let $\partial^{(\ell)}$ be a boundary map obtained from the group homology boundary operation by dropping
the first term from the sum. Analogously, let $\partial^{(r)}$ be a boundary map obtained
from the group homology boundary operation by dropping the last term from the sum. It is a classical
observation that $(C_n,\partial^{(\ell})$ and $(C_n,\partial^{(r)})$ are acyclic for a group (or a monoid).
We show this in a slightly more general context of weak simplicial modules (used later in the distributive case)
in Section 6. 
It would be of 
interest to analyze homology of $(C_n,\partial^{(\ell})$ for a semigroup without identity. Can it have a torsion?.

Our definition (in the presented form) can be generalized to any $k$-algebra $V$ not only $V=kX$
Below we give the definition for ``two wall" $k$-algebra, and in the next section 
we describe the mainstream Hochschild homology of $k$-algebra 
closely related to group homology.
\begin{definition}\label{Definition 4.6}
Let $A$ be a $k$-algebra which acts from the right on a $k$-module $M_0$ ($(m*x)*y=m*(x*y)$) and 
from the left on a $k$-module $M_w$ ($x*(y*m)=(x*y)*m$), that is $M_0$ is a right $A$-module and
$M_w$ a left $A$-module. We define chain groups $C_n= M_0\otimes A^{\otimes n }\otimes M_w$ and face 
maps $d_i(x_0,x_1,...,x_n,x_{n+1})=(x_0,...,x_i*x_{i+1},...,x_{n+1})$, $0\leq i \leq n$, $x_0\in M_0$, $x_{n+1}\in M_w$,
and $x_i\in A$ for $0<i \leq n$. Then $(C_n,d_i)$ is a presimplicial module and $(C_n,\partial_n)$, with 
$\partial_n=\sum_{i=0}^n(-1)^id_i$ is a chain complex, whose homology is denoted by $H_n(A,M_0,M_w)$.
If $A$ is a unitary algebra (with unit $1$) then we define degenerate maps $s_i(x_0,x_1,...,x_n,x_{n+1})=
(x_0,x_1,...,x_i,1,x_{i+1},...,x_n,x_{n+1})$, $0\leq i \leq n$, and one checks directly that 
$(C_n,d_i,s_i)$ is a simplicial module.

If we glue together $M_0$ and $M_w$ to get 2-sided module ($A$-bimodule) $M=M_w\otimes M_0$ we obtain 
Hochschild homology $H_n(A,M)$, \cite{Hoch,Lod-1}; see Section \ref{Section 5}.
\end{definition}

\section{Hochschild homology of a semigroup and an algebra}\label{Section 5}

Hochschild homology was created to have a homology theory of algebras, as before homology was defined 
only for (semi)groups, $G$, and (semi)group algebras $kG$ (Definition \ref{Definition 4.6} is only 
afterthought with Hochschild homology in mind). The history of discovering homology for algebra is 
described in Mac Lane autobiography \cite{Mac}: {\it Given his topological background and enthusiasm, Eilenberg 
was perhaps the first person to see this clearly. He was in active touch with Gerhard Hochschild, who was 
then a student of Chevalley at Princeton. Eilenberg suggested that there ought to be a cohomology (and 
a homology) for algebras. This turned out to be the case, and the complex used to describe the cohomology of groups 
(i.e. the bar resolution) was adapted to define the Hochschild cohomology of algebras.}  

Nevertheless, we start from Hochschild homology of semigroups as it leads to a presimplicial set, while 
the general Hochschild homology gives a presimplicial module.

Let $(X;*)$ be a semigroup and $E$ a two sided $X$-semigroup-set that is 
$(e*a)*b=e*(a*b)$, $(a*e)*b= a*(e*b)$, and $(a*b)*e=a*(b*e)$
 We define a Hochschild presimplicial module $\{C_n(X,E),d_i\}$ as follows \cite{Hoch,Lod-1}:
$C_n(X)= k(E\times X^{n})$ and the Hochschild face map is given by $d_i:k(E\times X^n) \to k(E\times X^{n-1}$
where $d_0(e_0,x_1,...x_n)= (e_0*x_1,x_2,...,x_n)$,\\
$d_i(e_0,x_1,...x_n)=(e_0,x_1,...,x_{i-1},x_i*x_{i+1},...,x_n$ for $0<i<n$, and \\
$d_n(e_0,x_1,...x_n)= (x_n*e_0,x_1,...,x_{n-1}).$\\
$\partial_n : \Z X^n \to \Z X^{n-1}$ is defined by:
$$\partial(x_0,x_1,...x_n)= $$
$$\sum_{i=0}^{n-1}(-1)^i(x_0,...,x_{i-1},x_i*x_{i+1},x_{i+2},...,x_n) +$$
$$(-1)^n(x_n*x_{0},x_1,...x_{n-1})$$

The resulting homology is called the Hochschild homology of a semigroup $(X,*)$
and denoted by $H\!H_n(X)$ (introduced by Hochschild in 1945 \cite{Hoch}).
It is useful to define $C_{-1}=\Z$ and define $\partial_0(x)=1$ to obtain the augmented Hochschild
chain complex and augmented Hochschild homology.

Again if $(X,*)$ is a monoid then dropping the last term gives an acyclic chain complex.

Notice that $\partial_n=\sum_{i=0}^n(-1)^id_i$,
where $d_i(x_0,...,x_n)= (x_0,...,x_{i-1},x_i*x_{i+1},x_{i+2},...,x_n)$, for $0\leq i<n$ and \\
$d_n(x_0,...,x_n)= (x_n*x_0,...,x_{n-1})$. \\
Again, $(C_n,d_i)$ is a presimplicial module. 
If $(X,*)$ is a monoid, one can define $n+1$ homomorphisms $s_i: C_n \to C_{n+1}$, called 
degeneracy maps, by $s_i(x_0,...,x_n)= (x_0,...,x_i,1,x_{i+1},...,x_n)$ 
(similarly, in the case of group homology of a semigroup, we put, 
$s_i(x_1,...,x_n)= (x_1,...,x_i,1,x_{i+1},...,x_n)$). We check that in both cases the 
following conditions hold:
$$ (1) \ \ \  d_id_j = d_{j-1}d_i\ for\ i<j. $$
$$(2)\ \ \ s_is_j=s_{j+1}s_i,\ \ 0\leq i \leq j \leq n, $$
$$ (3) \ \ \ d_is_j= \left\{ \begin{array}{rl}
 s_{j-1}d_i &\mbox{ if $i<j$} \\
s_{j}d_{i-1} &\mbox{ if $i>j+1$}
       \end{array} \right.
$$
$$ (4) \ \ \ d_is_i=d_{i+1}s_i= Id_{C_n}. $$

$(C_n,d_i,s_i)$ satisfying conditions (1)-(4) above is called a simplicial module\footnote{The concept  
of a simplicial set was introduced by Eilenberg and Zilber who called it {\it complete semi-simplicial complex}; 
their semi-simplicial complex is now usually called {\it presimplicial set} \cite{E-Z,May}.} 
(e.g. $Z$-module/abelian group).
If we replace (4) by a weaker condition $d_is_i=d_{i+1}s_i$ we deal with a weak simplicial module, 
the concept useful in the theory of homology of distributive structures (spindles or quandles).

As we already mentioned before Hochschild homology (and presimplicial module) can be defined for 
any algebra $A$ and two-sided $A$-module $M$. We put $C_n(A;M)= M\otimes A^{\otimes n}$ and 
$d_i(m,x_1,...,x_n)$ is given by:
$$d_0(m,x_1,...,x_n)= (mx_1,x_2,...,x_n).$$
$$d_i(m,x_1,...,x_n)= (m,x_1,...,x_{i-1},x_ix_{i+1},x_{i+2},...,x_n) \mbox{ for } 0<i<n, \mbox{ and}$$ 
$$d_n(m,x_1,...,x_n)=(x_nm,x_1,...,x_{n-1}).$$
From associativity of $A$ and our action of $A$ on $M$ follows that $(C_n,d_i)$ is a presimplicial module.
Furthermore, if $A$ is unitary we can define a simplicial module structure $(C_n,d_i,s_i)$, 
by putting $s_i(m,x_1,...,x_n)= (m,x_1,...,x_{i-1},1,x_{i},...,x_n)$.

\section{Homology of distributive structures}\label{Section 6}

Recall that a shelf (or right distributive system (RDS)) $(X;*)$ is a set $X$ with  
a right self-distributive  binary operation $*:X \times X \to X$ (i.e. $(a*b)*c= (a*c)*(b*c)$).

We work, for simplicity, with chain complexes and homology over $\Z$ but we can replace $\Z$ by any 
commutative ring $k$ in our considerations.

We start from atomic definition, one term distributive homology,  introduced in 2010 just before
Knots in Poland III conference \cite{Prz-5}.
 
\subsection{One-term distributive homology}

\begin{definition}\label{Definition 6.1}
We define a (one-term) distributive chain complex ${\mathcal C}^{(*)}$ as
follows:  $C_n=\Z X^{n+1}$ and the boundary operation $\partial^{(*)}_n: C_n \to C_{n-1}$ is given by:
$$\partial^{(*)}_n(x_0,...,x_n)= (x_1,...,x_n) +$$
 $$ \sum_{i=1}^{n}(-1)^i(x_0*x_i,...,x_{i-1}*x_i,x_{i+1},...,x_n).$$
The homology of this chain complex is called
a one-term distributive homology of $(X;*)$ (denoted by $H_n^{(*)}(X)$).
\end{definition}
We directly check that $\partial^{(*)}\partial^{(*)}=0$.

We can put $C_{-1}=\Z$ and $\partial_0(x)=1$. We have $\partial_0\partial_1^{(*)}=0$, so we
obtain an augmented distributive chain complex and an
augmented (one-term) distributive homology, $\tilde H^{(*)}_n$. As in the classical case we get:
\begin{proposition}\label{Proposition 6.4}
$
 H_n^{(*)}(X)=
 \begin{cases}
 \Z \oplus \tilde H^{(*)}_n(X) & n = 0 \\
 \tilde H^{(*)}_n(X) & \text{otherwise}
 \end{cases}
$
\end{proposition}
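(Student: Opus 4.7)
The plan is to imitate the classical argument relating unreduced and reduced singular homology: for $n \geq 1$ nothing changes, and for $n = 0$ the augmentation map splits off a direct $\mathbb{Z}$ summand.

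First I would observe that the augmented chain complex agrees with ${\mathcal C}^{(*)}$ in every nonnegative degree. The only difference is the extra module $C_{-1} = \mathbb{Z}$ together with the map $\partial_0: C_0 \to \mathbb{Z}$, $\partial_0(x) = 1$. Consequently, for every $n \geq 1$ the cycles and boundaries in degree $n$ are identical in the two complexes, so $H_n^{(*)}(X) = \tilde H_n^{(*)}(X)$ trivially.

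The work is in degree zero. Assuming $X \neq \emptyset$, the augmentation $\partial_0: C_0 \to \mathbb{Z}$ is surjective, and one checks directly from the defining formula that
\[
\partial_0 \bigl( \partial_1^{(*)}(x_0, x_1) \bigr) = \partial_0\bigl( (x_1) - (x_0 * x_1)\bigr) = 1 - 1 = 0,
\]
so $\operatorname{im}(\partial_1^{(*)}) \subset \ker(\partial_0)$. Then the short exact sequence of abelian groups
\[
0 \longrightarrow \ker(\partial_0) \longrightarrow C_0 \stackrel{\partial_0}{\longrightarrow} \mathbb{Z} \longrightarrow 0
\]
splits because $\mathbb{Z}$ is free (a splitting is obtained by picking any basepoint $x_* \in X$ and sending $1 \mapsto (x_*)$). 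Passing to the quotient by $\operatorname{im}(\partial_1^{(*)}) \subset \ker(\partial_0)$ preserves the splitting, giving
\[
H_0^{(*)}(X) = C_0/\operatorname{im}(\partial_1^{(*)}) \;\cong\; \bigl(\ker(\partial_0)/\operatorname{im}(\partial_1^{(*)})\bigr) \oplus \mathbb{Z} = \tilde H_0^{(*)}(X) \oplus \mathbb{Z}.
\]

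There is no real obstacle here; this is a routine fact about augmentations. The only small thing worth noting is the direct verification $\partial_0 \partial_1^{(*)} = 0$, which relies on the specific form of $\partial_1^{(*)}$ given in Definition \ref{Definition 6.1} (namely that both summands $(x_1)$ and $(x_0 * x_1)$ are single basis elements of $C_0$). Everything else is formal homological algebra, and the argument is insensitive to the distributivity of $*$.
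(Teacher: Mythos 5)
Your argument is correct and is exactly the classical augmentation argument the paper invokes: the paper offers no separate proof, stating the result ``as in the classical case,'' and your verification that $\partial_0\partial_1^{(*)}=0$ together with the splitting of $0\to\ker\partial_0\to C_0\stackrel{\partial_0}{\to}\Z\to 0$ (for $X\neq\emptyset$, using freeness of $\Z$) is precisely that standard argument. No gaps; the only point worth keeping explicit is the nonemptiness of $X$, which you noted.
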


If  $(X;*)$ is a rack then the complex $(C^{(*)}_n, \partial^{(*)})$ is acyclic, but in the general case 
of a shelf or spindle homology can be nontrivial with nontrivial free and torsion parts (joint work with 
A.Crans, K.Putyra and A.Sikora \cite{CPP,Pr-Pu-1,P-S}).

If we define $d_i:C_n \to C_{n-1}$, $0\leq i \leq n$,  by $d_i(x_0,...,x_n)=(x_0*x_i,...,x_{i-1}*x_i,x_{i+1},...,x_n)$,
then $(C_n,d_i)$ is a presimplicial module and $X^{n+1},d_i)$ is a presimplicial set. 
If we define degeneracy maps $s_i(x_0,...,x_n)=(x_0,..,x_{i-1},x_i,x_i,x_{i+1},...x_n)$ then one checks 
that $(C_n,d_i,s_i)$ is a very weak simplicial module. If we assume idempotency, that is $(X;*)$ is a 
spindle, then $(C_n,d_i,s_i)$ is a weak simplicial module and the degenerate part $(C^D_n,\partial_n)$ 
is a subchain complex which splits from $C_n,\partial_n)$ (see \cite{Prz-5}. This split is analogous to the one 
conjectured in \cite{CJKS} and proved in \cite{L-N} for classical quandle homology. In \cite{N-P-2} we 
gave very short, easy to visualize and to generalize, proof using the split map $C^N_n \to C_n$ given by 
$(x_0,x_1,...,x_n) \to (x_0,x_1-x_0,...,x_n-x_{n-1})$.

We can repeat our definitions if $(X;*)$ is a shelf and $Y$ is a shelf-set ($*:Y\times X \to Y$ with 
$(y*x_1)*x_2 = (y*x_2)*(x_1*x_2)$ see Figure 6.1 for visualization). The presimplicial set  
 $(Y\times X^{n+1},d_i)$ has face maps $d_i$ defined by $d_i(y,x_0,...,x_n)=(y*x_i,x_0*x_i,...,x_{i-1}*x_i,x_{i+1},..,x_n)$.
The face map $d_i$ is visualized in Figure 6.2; this visualization will play an important role when 
distributive homology will be generalized to Yang-Baxter homology.

\ \\
\centerline{\psfig{figure=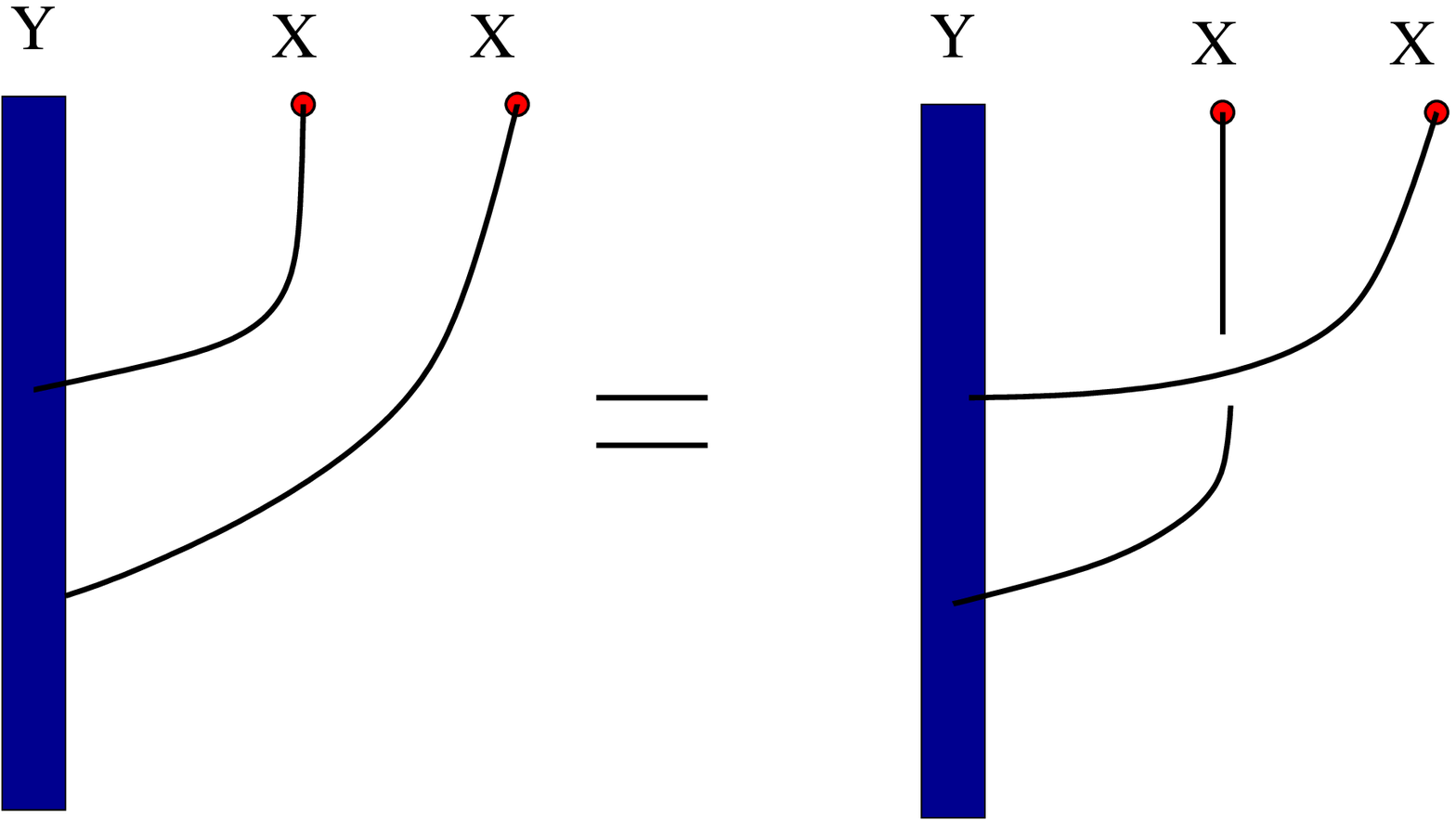,height=3.3cm}}
\centerline{Figure 6.1; Graphical interpretation of the axiom for $X$-shelf-set $Y$}
\centerline{$(y*x_1)*x_2 = (y*x_2)*(x_1*x_2)$}

\ \\
\centerline{\psfig{figure=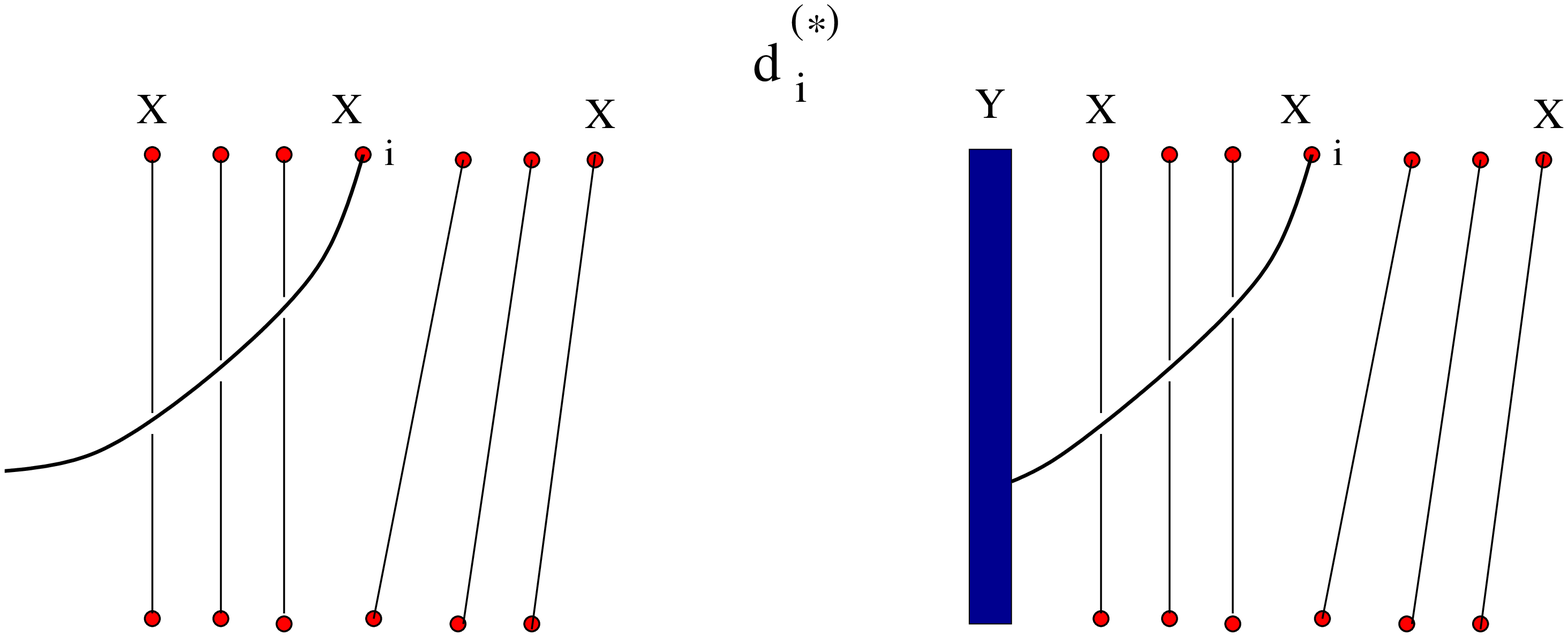,height=4.3cm}}
\centerline{Figure 6.2; Graphical interpretation of the face map $d^{(*)}_i$}
 
\subsection{Multi-term distributive homology}\label{Section 7}

The first homology theory related to a self-distributive structure was constructed in early 1990s by
Fenn, Rourke, and Sanderson \cite{FRS-2} and motivated by (higher dimensional) knot theory\footnote{The recent
paper by Roger Fenn, \cite{Fenn} states:
"Unusually in the history of mathematics, the discovery of the homology and classifying
space of a rack can be precisely dated to 2 April 1990."}.
For a rack $(X,*)$, they defined rack homology $H_n^R(X)$ by taking $C^R_n=\Z X^n$ and
$\partial_n^R: C_n \to C_{n-1}$ is given by $\partial_n^R = \partial_{n-1}^{(*)}-\partial_{n-1}^{(*_0)}$.
Our notation has grading shifted by 1, that is, $C_n(X)= C^R_{n+1}= \Z X^{n+1}$. It is  routine to
check that $\partial^R_{n-1}\partial_n^R=0$. However, it is an interesting question what properties
of $*_0$ and $*$ are really used. With relation to the paper \cite{N-P-3} we noticed that
it is distributivity again which makes $(C^R(X),\partial_n^R)$ a chain complex. More generally we
observed that if $*_1$ and $*_2$ are right self-distributive and distributive  with respect to each other,
then $\partial^{(a_1,a_2)}= a_1\partial^{(*_1)}+a_2\partial^{(*_2)}$ leads to a chain complex
(i.e. $\partial^{(a_1,a_2)}\partial^{(a_1,a_2)}=0$).
Below I answer a more general question: for a finite set $\{*_1,...,*_k\}\subset Bin(X)$ and integers $a_1,...,a_k \in \Z$,
when is $(C_n,\partial^{(a_1,...,a_k)})$ with $\partial^{(a_1,...,a_k)}=a_1\partial^{(*_1)}+...+a_k\partial^{(*_k)}$
a chain complex?
When is $(C_n,d_i^{(a_1,...,a_k)})$ a presimplicial set?
We answer these questions in Lemma \ref{Lemma 6.3}. In particular, for a distributive set $\{*_1,...,*_k\}$
the answer is affirmative.

\begin{lemma}\label{Lemma 6.3}\
\begin{enumerate}
\item[(i)] If $*_1$ and $*_2$ are right self-distributive operations, then  $(C_n,\partial^{(a_1,a_2)})$ is
a chain complex if and only if the operations $*_1$ and $*_2$ satisfy:
\begin{formulla}\label{Formulla 6.4}
$$(a*_1b)*_2c + (a*_2b)*_1c = (a*_2c)*_1(b*_2c) + (a*_1c)*_2(b*_1c) \mbox{ in ${\Z}X$}.$$
\end{formulla}
We call this condition  {\it weak distributivity}. If Condition \ref{Formulla 6.4} does not hold we can take $C_0(X)$
to be the quotient by Equation 6.4: $C_0(X)= {\Z}X/{\bf 6.4}$ and then we take $C_n=C_0^{\otimes n+1}$.
\item[(ii)] We say that a set $\{*_1,...,*_k\}\subset Bin(X)$ is weakly distributive if each operation is
right self-distributive and each pair of operations is weakly distributive (with two main cases:
distributivity $(a*_1b)*_2c = (a*_2c)*_1(b*_2c)$ and chronological distributivity\footnote{I did not see this concept
considered in literature, but it seems to be important in K.Putyra's work
on odd Khovanov homology \cite{Put}.}
$(a*_1b)*_2c=(a*_1c)*_2(b*_1c)$).
We have: $(C_n,d_i^{(a_1,...,a_k)})$ is a presimplicial set if and only if
the  set $\{*_1,...,*_k\}\subset Bin(X)$ is weakly distributive.
\item[(iii)] $(C_n,\partial_n^{(a_1,...,a_k)})$ is a chain complex if and only if
the  set $\{*_1,...,*_k\}\subset Bin(X)$ is weakly distributive.
\end{enumerate}
\end{lemma}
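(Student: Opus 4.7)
The plan is to reduce all three parts to a single bilinear verification on triples. I would first record the single-operation baseline: for any right self-distributive $*_\alpha$ the face maps $d_i^{(*_\alpha)}$ given by
\[ d_i(x_0,\dots,x_n)=(x_0*_\alpha x_i,\dots,x_{i-1}*_\alpha x_i,x_{i+1},\dots,x_n) \]
for $1\le i\le n$ (with $d_0$ the operation-free deletion map) satisfy $d_i^{(*_\alpha)}d_j^{(*_\alpha)}=d_{j-1}^{(*_\alpha)}d_i^{(*_\alpha)}$ for $i<j$. Entry-by-entry this reduces to $(x_p*_\alpha x_i)*_\alpha x_j=(x_p*_\alpha x_j)*_\alpha(x_i*_\alpha x_j)$, which is self-distributivity, and underlies the already-noted fact that each $\partial^{(*_\alpha)}$ is a differential.

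Writing $d_i:=d_i^{(a_1,\dots,a_k)}=\sum_\alpha a_\alpha d_i^{(*_\alpha)}$, I would next expand $d_id_j-d_{j-1}d_i$ bilinearly. The diagonal contributions $a_\alpha^2\bigl(d_i^{(*_\alpha)}d_j^{(*_\alpha)}-d_{j-1}^{(*_\alpha)}d_i^{(*_\alpha)}\bigr)$ vanish by the baseline, leaving
\[ \sum_{\alpha<\beta}a_\alpha a_\beta\bigl(d_i^{(*_\alpha)}d_j^{(*_\beta)}+d_i^{(*_\beta)}d_j^{(*_\alpha)}-d_{j-1}^{(*_\beta)}d_i^{(*_\alpha)}-d_{j-1}^{(*_\alpha)}d_i^{(*_\beta)}\bigr). \]
Computing each composite on a basis element $(x_0,\dots,x_n)$, at entry $p<i$ the four composites produce $(x_p*_\beta x_j)*_\alpha(x_i*_\beta x_j)$, $(x_p*_\alpha x_j)*_\beta(x_i*_\alpha x_j)$, $(x_p*_\alpha x_i)*_\beta x_j$ and $(x_p*_\beta x_i)*_\alpha x_j$. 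After the substitution $a=x_p$, $b=x_i$, $c=x_j$ the required equality is precisely Formula 6.4. This proves (ii): $(C_n,d_i)$ is a presimplicial set if and only if $\{*_1,\dots,*_k\}$ is weakly distributive.

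Part (iii) then follows immediately, because every presimplicial module gives a chain complex under $\partial_n=\sum(-1)^id_i$; conversely, restricting $\partial^2=0$ to $C_2=\Z X^3$ and reading the coefficient of a chosen pair $\alpha\neq\beta$ on a basis triple $(a,b,c)$ recovers Formula 6.4 in $\Z X$. Part (i) is the $k=2$ specialisation, for which both ordinary distributivity and chronological distributivity match up Formula 6.4 in pairs, recovering the sufficient direction stated in the preamble.

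The main obstacle is bookkeeping: one must confirm that entries at positions $p\ge i$ and, when $i<j-1$, the intermediate coordinates $x_{i+1},\dots,x_{j-1}$, do not contribute new constraints. I would verify this by checking that at each such position the four composites all reduce to $x_{p+1}*_\bullet x_j$ or to $x_{p+2}$, which agree identically on both sides of the bilinear expansion. Hence the triple identity at $p<i$ is the only nontrivial relation, and it is exactly Formula 6.4.
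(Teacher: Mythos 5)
Your reduction of $\partial^2=0$ to the bilinear expansion, the single-operation baseline, and the necessity direction (reading Formula 6.4 off the coefficient of $a_\alpha a_\beta$ in $\partial^2$ on $C_2\to C_0$, as in the paper's own derivation of the two-term case) are all fine. The gap is in the sufficiency step, i.e.\ in the claim that the mixed terms cancel entrywise so that only the triple identity 6.4 remains. The four composites $d_i^{(*_\alpha)}d_j^{(*_\beta)}$, $d_i^{(*_\beta)}d_j^{(*_\alpha)}$, $d_{j-1}^{(*_\beta)}d_i^{(*_\alpha)}$, $d_{j-1}^{(*_\alpha)}d_i^{(*_\beta)}$ send a basis tuple to a basis tuple of $\Z X^{n-1}$, and the required cancellation is between these \emph{whole tuples}, not coordinate by coordinate. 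At the intermediate positions $i\le p\le j-2$ the entries are $x_{p+1}*_\beta x_j$ for the first and fourth composites but $x_{p+1}*_\alpha x_j$ for the second and third, so they do \emph{not} ``agree identically on both sides'': they agree only under the pairing that corresponds to the genuinely distributive resolution of 6.4. Since Formula 6.4 is a per-triple multiset identity in $\Z X$, different triples (and different coordinates $p<i$ of the same tuple) may resolve differently (distributively at some, chronologically at others), and then no matching of whole tuples occurs; your argument silently assumes a coherent resolution, which is exactly the content of the pair being a distributive set.

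This is not a repairable bookkeeping issue: the step genuinely fails. Take $u\neq v$ in $X$ and the constant shelves $a*_1b=u$, $a*_2b=v$ (each is right self-distributive, and 6.4 holds for every triple since both sides equal $u+v$ in $\Z X$). A direct computation on any $(x_0,x_1,x_2,x_3)$ gives $\bigl(\partial^{(*_1)}\partial^{(*_2)}+\partial^{(*_2)}\partial^{(*_1)}\bigr)(x_0,x_1,x_2,x_3)=(u,v)+(v,u)-(u,u)-(v,v)\neq 0$ in $\Z X^2$, coming from the non-adjacent pair $d_1d_3$ versus $d_2d_1$; so with $a_1=a_2=1$ neither the presimplicial relations of your part (ii) nor $\partial^2=0$ on $C_3$ follow from per-triple weak distributivity, even though each single $\partial^{(*_\alpha)}$ squares to zero. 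What your expansion does prove is: (a) chain complex $\Rightarrow$ 6.4 (degree $2$), and (b) 6.4 with a \emph{uniform} distributive resolution (a distributive set, e.g.\ the rack-homology pair $(*,*_0)$) $\Rightarrow$ $d_i^{(*_\alpha)}d_j^{(*_\beta)}=d_{j-1}^{(*_\beta)}d_i^{(*_\alpha)}$ termwise and hence a chain complex in all degrees. The paper states Lemma \ref{Lemma 6.3} without proof, so you cannot lean on it; as written, your sufficiency direction needs either this coherence hypothesis or a different argument (for instance one carried out after passing to the quotient $C_0=\Z X/{\bf 6.4}$ and analyzing the higher-degree obstructions separately), and the two ``main cases'' must be treated asymmetrically, since the chronological case does not make the intermediate entries match.
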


We complete this section by showing 
that for a rack homology of a quandle or spindle,
$H^R_n$ embeds in $H^R_{n+1}$; we construct monomorphic ``homology operation" of degree one\footnote{For 
a quandle it is a well know fact that rack homology in dimension $n$ is isomorphic to ``early degenerate" 
homology in dimension $n+1$.}. 
We place it in a more general context of weak simplicial modules.
\begin{lemma}\label{Lemma 6.5}
Let $(C_n,d_i,s_i)$ be a weak simplicial module then $\partial s_0+ s_0\partial = s_0d_0$; in effect  
$s_0d_0$ induces a trivial map on homology. In particular: 
\begin{enumerate}
\item[(i)] if the map $s_0d_0$ is the identity then $s_0d_0s_0=s_0$ (as in the 
case of a simplicial module) and then the chain complex $(s_0(C_{n-1}),\partial_n)$ is acyclic, 
\item[(ii)] if $d_0=0$ then $s_0$ is a chain map (e.g. this hold for 2-term rack homology),
\item[(iii)] In the case of one term distributive homology, we conclude that the map replacing $x_0$ by $x_1$ in 
$(x_0,x_1,...,x_n)$ is a chain map, chain homotopic to zero map.
\end{enumerate}
\end{lemma}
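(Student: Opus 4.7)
The plan is to verify the key identity $\partial s_{0} + s_{0}\partial = s_{0}d_{0}$ by direct computation from the weak simplicial axioms, and then to deduce each of (i)--(iii) as immediate consequences.

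For the main identity, I would expand $\partial_{n+1}s_{0} = \sum_{i=0}^{n+1}(-1)^{i}d_{i}s_{0}$ and handle three ranges of $i$. The $i=0$ and $i=1$ terms cancel by axiom (4'), since $d_{0}s_{0}=d_{1}s_{0}$. For $i \geq 2$, axiom (3) (in the case $i>j+1$ with $j=0$) gives $d_{i}s_{0}=s_{0}d_{i-1}$, so
$$
\partial_{n+1}s_{0} \;=\; \sum_{i=2}^{n+1}(-1)^{i}s_{0}d_{i-1} \;=\; -\sum_{j=1}^{n}(-1)^{j}s_{0}d_{j}.
$$
On the other hand, $s_{0}\partial_{n} = s_{0}d_{0} + \sum_{j=1}^{n}(-1)^{j}s_{0}d_{j}$. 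Adding the two expressions gives the telescoping identity $\partial s_{0}+s_{0}\partial = s_{0}d_{0}$. This exhibits $s_{0}$ as a chain homotopy between $s_{0}d_{0}$ and the zero map, so $s_{0}d_{0}$ is a chain map inducing the trivial endomorphism on $H_{*}(C)$. (The fact that $s_{0}d_{0}$ is itself a chain map can be verified either as a formal consequence of being chain homotopic to $0$ or by computing $\partial(s_{0}d_{0}) = \partial^{2}s_{0} + s_{0}d_{0}\partial = s_{0}d_{0}\partial$ using $\partial^{2}=0$.)

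For (i), I would first recall (as computed in Subsection 3.2, or by direct specialization to $p=0$) that $\partial_{n}s_{0} = \sum_{i=2}^{n+1}(-1)^{i}s_{0}d_{i-1} \in s_{0}(C_{n-1})$, so $(s_{0}(C_{n-1}),\partial_{n})$ is a genuine subchain complex. The assumption $s_{0}d_{0}=\mathrm{Id}$ applied to an element $s_{0}(y)$ yields $s_{0}d_{0}s_{0}(y) = s_{0}(y)$, i.e.\ $s_{0}d_{0}s_{0}=s_{0}$, so on the subcomplex the operator $s_{0}d_{0}$ acts as the identity. Moreover $s_{0}$ restricts to the subcomplex because $s_{0}\bigl(s_{0}(C_{n-1})\bigr)\subset s_{0}(C_{n})$ trivially. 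Therefore the main identity restricts to
$\partial s_{0}+s_{0}\partial = \mathrm{Id}$ on $(s_{0}(C_{n-1}),\partial)$, giving an explicit chain contraction and hence acyclicity.

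Parts (ii) and (iii) are then immediate. For (ii), if $d_{0}=0$ the main identity collapses to $\partial s_{0} = -s_{0}\partial$, which is precisely the statement that $s_{0}$ is a chain map of degree $+1$. For (iii), in the one-term distributive complex $d_{0}(x_{0},\dots,x_{n}) = (x_{1},\dots,x_{n})$ and $s_{0}(x_{0},\dots,x_{n}) = (x_{0},x_{0},x_{1},\dots,x_{n})$, so
$$
s_{0}d_{0}(x_{0},x_{1},\dots,x_{n}) = (x_{1},x_{1},x_{2},\dots,x_{n}),
$$
which is exactly the ``replace $x_{0}$ by $x_{1}$'' map; its being a chain map chain-homotopic to zero is the specialization of the general statement. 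The only mildly delicate point in the whole argument is verifying in (i) that both the subcomplex structure and the chain homotopy survive the restriction; the rest is bookkeeping with the axioms.
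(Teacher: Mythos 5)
Your verification of $\partial s_0 + s_0\partial = s_0 d_0$ is exactly the paper's own computation: cancel the $i=0,1$ terms via axiom (4') and slide $s_0$ past the remaining face maps via axiom (3) so the sums telescope, exhibiting $s_0$ as a chain homotopy. The additional bookkeeping you supply for (i)--(iii) (the image of $s_0$ being a subcomplex, the restricted chain contraction, and the identification of $s_0d_0$ with the ``replace $x_0$ by $x_1$'' map) is correct and simply fills in what the paper leaves to the reader.
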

\begin{proof} We have $\partial s_0+ s_0\partial =$
$$ d_0s_0-d_1s_0 +\sum_{i=2}^{n+1}(-1)^id_is_0 + s_0d_0 + \sum_{i=1}^{n}(-1)^is_0d_i =$$
$$ d_0s_0-d_1s_0 +\sum_{i=2}^{n+1}(-1)^is_0d_{i-1} + s_0d_0 + \sum_{i=1}^{n}(-1)^is_0d_i = s_0d_0.$$
\end{proof}
If $s_0$ has a left inverse map, say $p_n:C_{n+1}\to C_n$, $p_ns_0=Id_{C_n}$, as is the case for 
a weak simplicial module in (multi) spindle case, we can say more.

\begin{lemma}\label{Lemma 6.6}
 Let $(C_n,d_i,s_i)$ be a weak simplicial module $p_n:C_{n+1}\to C_n$ is a left inverse of $s_0$
and additionally $p_nd_i = d_{i-1}p_n $ for $i>0$ then $p\partial + \partial p= pd_0$; in effect 
$pd_0$ induces a trivial map on homology. In particular: 
\begin{enumerate}
\item[(i)] If we deal with (multi)term distributive homology, $p$ may be taken to be the map deleting the 
first coordinate of $(x_0,x_1,...,x_n)$ (in one term distributive homology $p=d_0$, so $d_0d_0$ is 
a chain map trivial on homology).
\item[(ii)] if $d_0=0$ then $p$ is a chain map ((e.g. this hold for 2-term rack homology),
\item[(iii)] if $d_0=0$ then $p$ induces an epimorphism on homology and $s_0$ induces a monomorphism on homology;
in particular $s_0$ induces monomorphic ``homology operation" of degree one ($s_0(C_{n-1},\partial_n)$ is called 
an early degenerate chain complex).
\end{enumerate}
\end{lemma}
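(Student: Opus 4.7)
The plan is to adapt the Lemma 6.5 computation with $s_0$ replaced by $p$. I would expand both $p\partial$ and $\partial p$ as alternating sums over face maps, apply the hypothesis $p\circ d_i = d_{i-1}\circ p$ for $i\geq 1$ to convert $\sum_{i\geq 1}(-1)^i p\,d_i$ into $-\sum_{j\geq 0}(-1)^j d_j\,p$ after reindexing $j = i-1$, and observe that this shifted sum cancels $d_0\,p$ together with the remaining face terms from $\partial p$. What survives is exactly $p\,d_0$. Only the commutation hypothesis is used; no degeneracy axiom enters, so this is really a statement about $p$-augmented presimplicial modules.

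With the identity $\partial p + p\partial = p\,d_0$ in hand, the three points are essentially corollaries. For (i), in one-term distributive homology the deletion map $(x_0,\ldots,x_n)\mapsto(x_1,\ldots,x_n)$ literally equals $d_0$, and the standard presimplicial relation $d_i d_j = d_{j-1}d_i$ for $i<j$ specializes to $d_0 d_i = d_{i-1}d_0$ for $i\geq 1$, i.e.\ the hypothesis with $p = d_0$. The identity then reads $\partial d_0 + d_0\partial = d_0 d_0$, so $d_0 d_0$ carries cycles to boundaries and induces the zero map on homology. For (ii), setting $d_0 = 0$ collapses the identity to $\partial p = -p\partial$, so $p$ commutes with $\partial$ up to sign and descends to a well-defined map on homology; the case of 2-term rack homology is precisely where this applies.

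For (iii), when $d_0 = 0$ the analogous consequence of Lemma 6.5 gives $\partial s_0 = -s_0\partial$, so $s_0$ also descends to homology. The defining relation $p_n s_0 = \mathrm{Id}_{C_n}$ then passes to $p_\ast(s_0)_\ast = \mathrm{Id}$ on $H_\ast$, forcing $(s_0)_\ast$ to be injective and $p_\ast$ surjective in each degree. This realizes the promised monomorphic degree-$+1$ ``homology operation'' $s_0$ and exhibits $s_0(C_{n-1})$ as the early-degenerate summand in homology.

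The only real obstacle is the sign bookkeeping in the first step; once the chain-homotopy identity is secured, the rest is formal manipulation paralleling the situation of Lemma 6.5.
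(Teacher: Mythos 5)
Your proposal is correct and matches the paper's own argument: the identity $p\partial+\partial p=pd_0$ is obtained exactly as in the printed proof, by pushing $p$ past each $d_i$ ($i\geq 1$) via $pd_i=d_{i-1}p$, reindexing, and cancelling against $\partial p$, and parts (i)--(iii) then follow, as in the paper, from $p_ns_0=\mathrm{Id}$ together with the Lemma 6.5 analogue for $s_0$. Your added remarks (that only the commutation hypothesis and presimplicial structure are used, and that when $d_0=0$ the maps commute with $\partial$ only up to sign yet still descend to homology) are accurate refinements of the same route, not a different one.
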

\begin{proof} We have $p \partial + \partial p   =$ 
$$ pd_0 +\sum_{i=1}^{n}(-1)^id_{i-1}p + \sum_{i=0}^{n-1}(-1)^id_{i}p =  pd_0$$
Part (iii) follow from the fact that $p_ns_0=Id_{C_n}$ and $p$ and $s_0$ are chain maps.
\end{proof}
Thus we proved that rack homology of quandles (or spindles) cannot decrease with $n$ 
($H^R_n \subset H^R_{n+1}$).

We computed with K.Putyra \cite{Pr-Pu-1} various multi-term homology, including that for finite distributive 
lattices (including Boolean algebras).

\section{Bloh-Leibniz-Loday algebra}
Lie algebra was probably the first nonassociative structure for which homology was defined \cite{Ch-E}.
The idea of Chevalley and Eilenberg was to translate homology of a (Lie) group to homology of its Lie 
algebra\footnote{The paper starts from: {\it The present paper lays no claim to deep originality.
Its main purpose is to give a systematic treatment of the methods by which topological
questions concerning compact Lie groups maybe reduced to algebraic questions concerning Lie algebras}.}. 
We should stress, in particular the role of conjugacy in Lie algebra, as 
conjugacy was the motivation for wracks (racks) and quandles.

We discuss here homology theory of Bloh-Loday-Leibniz algebras introduced by Bloh an Loday \cite{Blo-1,Blo-2,Lod-2},
and which can be informally thought to be a linearization of distributive homology.\footnote{BLL algebras are 
often call Leibniz algebras as the version of Jacobi identity they satisfy can be treated as a Leibniz rule. The 
history of the discovery is described by Loday as follows \cite{Lod-1}:\\
 ``In the definition of the Chevalley-Eilenberg complex of a Lie algebra ${\mathcal G}$ the module
of chains is the exterior module. The non-commutative analog of the exterior module $\Lambda \mathcal G$ is
the tensor module $T\mathcal G$. If one replaces $\Lambda$ by $\bigotimes$ in the classical formula for the boundary map
$d$ of the $CE$-complex, then one gets a well defined map $T\mathcal G$ but the relation $d^2$ is not
valid anymore. However I discovered that, if one writes $d$ so as as to put the commutator $[x_i,x_j]$ at the place
$i$ when $i<j$,..., then the relation $d^2=0$ is satisfied in the tensor (i.e. non-commutative) context.
So, this give rise to a new complex $T{\mathcal G},d)$ for the Lie algebra ${\mathcal G}$. The homology
groups of this complex are denoted are denoted $HL_*({\mathcal G})$ and called the non-commutative homology
groups of ${\mathcal G}$. In the proof of the relation $d^2=0$ in the tensor module case, I noticed that
the only property  of the Lie bracket, which is needed, is the Leibniz relation $[x,[y,z]] = [[x,y],z] - [[x,z],y]$.
 So the complex $(T{\mathcal G},d)$
and its homology are  defined for more general objects than Lie algebras, for the {\it Leibniz algebras}."
 }
We follow Loday and Lebed here \cite{Lod-1,Lod-2,Leb-1,Leb-2}. Because BLL (Bloh-Leibniz-Loday) algebra is a generalization
of a Lie algebra we use a bracket $[-,-]$ for a bilinear map:
\begin{definition}\label{Definition 7.1}
\begin{enumerate}
\item[(1)] Let $V$ be a $k$-module equipped with a bilinear map $[-,-]: V\times V \to V$ satisfying
the relation (Leibniz version of the Jacobi identity):
$$[x,[y,z]]= [[x,y],z] - [[x,z],y], \mbox{ for all $x,y,z\in V$}.$$
I see the linearization of distributivity as  
$$(x*y)*z=(x*z)*(y*z)  \Longrightarrow (x*y)*z= (x*z)*y+ x*(y*z) \mbox{ BLL condition}.$$
V.Lebed formalized this ``linearization" in the case $V$ has a central element $1$ that is 
$[x,1]=0[1,x]$  we color crossing as follows: {\parbox{3.6cm}{\psfig{figure=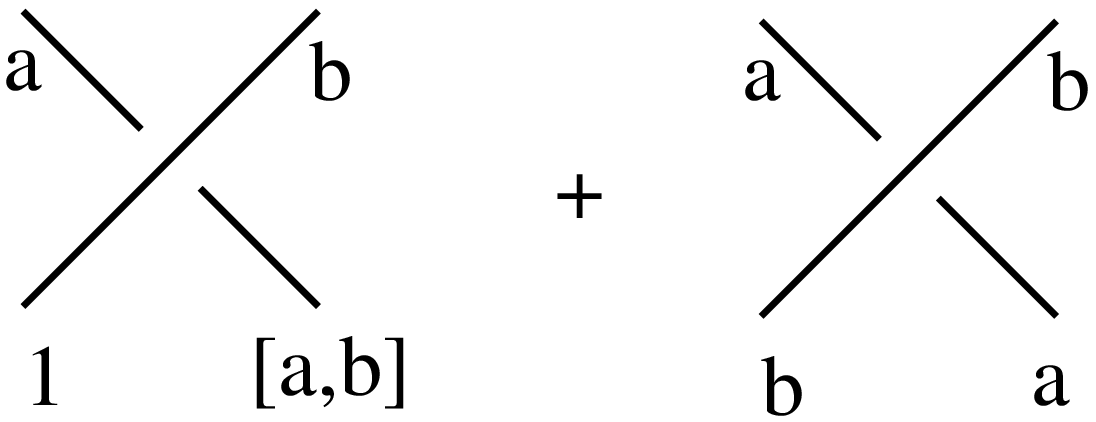,height=1.3cm}}}. 
This led Lebed to describe face  map for chain complex of BLL algebras as in Figure 7.1; compare the figure with 
Definition \ref{Definition 7.2}. 
\item[(2)] A BLL module $M$ over $V$ is a $k$-module with a bilinear action (still denoted $[-,-]:M\times V \to M$,
satisfying the formula from (1) for any $x\in M$ and $y,z\in V$.\\
In a special case of $M=k$ ($k$ a ring with identity), the map $[-,-]:M\times V \to M$ is replaced by
map $\epsilon: V\to k$ which is zero on commutators (Lie character), \cite{Leb-2}).
\end{enumerate}
\end{definition}
\centerline{\psfig{figure=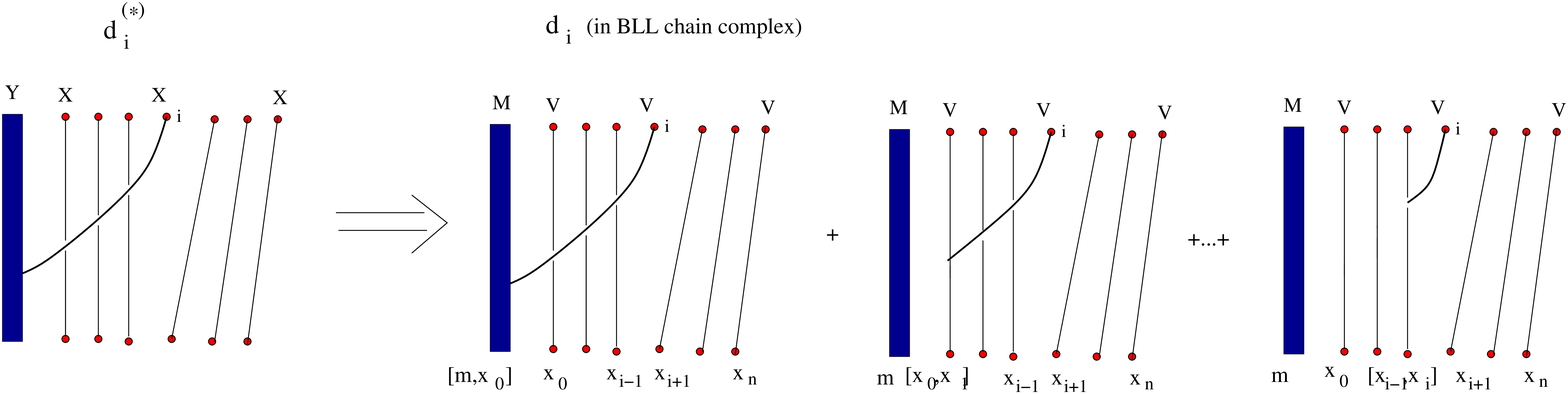,height=3.8cm}}\ \ \\
\centerline{Figure 7.1; Comparing face map $d_i$ in distributive and unital BLL algebra }
\ \\ \ \\
Homology of $(V,M)$ was constructed by Loday based partially on the work of C.Cuvier \cite{Cuv,Lod-1}, they
were unaware of the earlier work by Bloh \cite{Blo-2}.
\begin{definition}\label{Definition 7.2}
Let $V$ be a BLL algebra, and $M$ a  BLL module over $V$.
\begin{enumerate}
\item[(1)] We define $C_n= M\otimes V^{\otimes n+1}$, that is $C_*= TV$ (tensor algebra).
For  $0\leq i \leq n$, $d_i: C_n \to C_{n-1}$ is given by:
$$d_i(x_{-1},x_0,...,x_n)=\sum_{j: -1\leq j<i}(x_{-1},x_0,...,x_{j-1},[x_j,x_i],x_{j+1},x_{i-1},x_{i+1},...,x_n).$$

If we take $\Lambda V$, the exterior algebra in place of $ TV$. We will be in the classical case of
homology developed for Lie algebras by  Chevalley and Eilenberg \cite{Ch-E}.

\item[(2)] Assume that $V$ is a split unital BLL algebra (that is $V=V'\oplus k 1$,
and $M=k$, $[1,x]=\epsilon(x)$ ($1 \in k$ $x\in V$), then the ``primitive" degenerate maps $s_i^p: C_n \to C_{n+1}$
is defined by
$$s^p_i(x_{-1},x_0,...,x_n)=
 \left\{ \begin{array}{rl}
(x_{-1},x_0,...,x_{i-1},(1,x_i)+(x_i,1),x_{i+1},...,x_n) &\mbox{ if $x_i\in V'$} \\
  (x_{-1},x_0,...,x_{i-1},1,1,x_{i+1},...,x_n) &\mbox{ if $x_i=1$ }
       \end{array} \right.
$$
\item[(3)] Assume $V$ is a free $k$-module with basis $X$ and define degenerate maps $s_i^g: C_n \to C_{n+1}$ on $X$
by doubling $i$th coordinate:
$$s_i^g(m,x_0,..,x_n)= (m,x_0,..,x_{i-1},x_i,x_i,x_{i+1},...,x_n).$$

\end{enumerate}
\end{definition}
\begin{lemma}\label{Lemma 8.3}
\begin{enumerate}
\item[(1)]
$(C_n(V,M),d_i)$ is a presimplicial module, that is $d_id_j=d_{j-1}d_i$ for $0\leq i<j\leq n$.
\item[(2)]
$(C_n(X,M),d_i,s_i^p)$ is a weak simplicial module, and its homology and degenerated homology are
annihilated by $\epsilon (1)$.
\item[(3)]
$(C_n(X,M),d_i,s_i^g)$ satisfies conditions (1) (2) and partially (3) ($d_is_j=s_{j-1}d_i$ for $i<j$) of
a simplicial module. Condition (4'),  $d_is_i - d_{i+1}s_{i}= 0$ holds iff $[x,x]$ for all $x\in X$.
The condition (3) of a simplicial module for $i>j+1 $ requires the following equality:
$$[x_j,x_{i}]\otimes [x_j,x_{i}] = [x_j,x_{i}]\otimes x_j +
x_j \otimes [x_j,x_{i}]$$ for basic elements $(x_j,x_{i}) \in X^2$ for $i>j+1$. Thus if we take
$C_*=VT/{\mathcal I}$ divided by an ideal containing the above equation, then $(C_n(X,M),d_i,s_i^g)$
is a very weak simplicial module. It is a weak simplicial module
iff additionally $[x,x]=0$, for all $x\in X$.\footnote{Notice that in exterior algebra $\Lambda V$,
the equation $[x_j,x_{i}]\otimes [x_j,x_{i}] = [x_j,x_{i}]\otimes x_j + x_j \otimes [x_j,x_{i}]$
holds ($0=0$), but in that case our degenerate map $s^g_i$ would be a zero map.}
\end{enumerate}
\end{lemma}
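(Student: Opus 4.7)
I would prove the three parts in order, each by direct expansion of the definitions, with the Leibniz identity doing the heavy lifting in Part (1), the centrality of $1$ in Part (2), and case-by-case bookkeeping in Part (3).

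For Part (1), fix $i<j$ and expand both $d_i d_j$ and $d_{j-1} d_i$ applied to $(x_{-1},\ldots,x_n)$. The terms in $d_i d_j$ split into four kinds: (A) two disjoint brackets $[x_\ell,x_i]$ at slot $\ell$ and $[x_k,x_j]$ at slot $k$ with $k<i$, $\ell\neq k$; (B) doubly-bracketed terms $[[x_k,x_j],x_i]$ at slot $k$, arising when $d_j$ deposits $[x_k,x_j]$ at some slot $k<i$ and $d_i$ then brackets it with $x_i$; (C) terms $[x_\ell,[x_i,x_j]]$ at slot $\ell$, arising when $d_j$ deposits $[x_i,x_j]$ at slot $i$ and $d_i$ brackets a slot $\ell<i$ against it; and (D) two disjoint brackets as in (A) but with $i<k<j$. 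The expansion of $d_{j-1}d_i$ yields the analogous disjoint-bracket terms, which match (A) and (D) after the index shift, together with doubly-bracketed terms of the form $[[x_\ell,x_i],x_j]$ at slot $\ell<i$. What remains is the identity
$$[[x_\ell,x_j],x_i] + [x_\ell,[x_i,x_j]] = [[x_\ell,x_i],x_j],$$
which is precisely the rearrangement of the Leibniz/BLL axiom $[x,[y,z]]=[[x,y],z]-[[x,z],y]$ with $(x,y,z)=(x_\ell,x_i,x_j)$. This concludes $d_id_j=d_{j-1}d_i$.

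For Part (2), first verify that $(C_n,d_i,s_i^p)$ satisfies the weak simplicial axioms (1)--(3) and (4'). Axiom (1) is Part (1). Axiom (2) is immediate by case analysis on whether the entries being inserted are $1$ or in $V'$. Axiom (3) for $i\neq j,j+1$ uses the relations $[1,x]=\epsilon(x)$ and $[x,1]=0$ so that $d_i$ passes through $s_j^p$ unaltered away from its action slot. The point of the definition of $s_i^p$ is to force axiom (4'), $d_is_i^p=d_{i+1}s_i^p$: when $x_i\in V'$ the symmetrization $(1,x_i)+(x_i,1)$ makes the two face maps produce the same sum after applying $[1,-]=\epsilon(-)$; when $x_i=1$ the insertion $(1,1)$ trivially yields equality. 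Then apply Lemma \ref{Lemma 6.5} with $s_0^p$ to get $\partial s_0^p + s_0^p \partial = s_0^p d_0$. A short case analysis shows that $s_0^p d_0$ acts as multiplication by $\epsilon(1)$ on $C_*$ and on its degenerate subcomplex, so $\epsilon(1)\cdot[z]=[s_0^p d_0 z]=0$ in homology.

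For Part (3), each condition is verified by direct expansion. Axiom (1) is Part (1), and axiom (2), $s_i^g s_j^g = s_{j+1}^g s_i^g$ for $i\le j$, follows because doublings at distinct slots commute. For axiom (3) with $i<j$, $d_i$ modifies only slots at positions $\le i<j$, disjoint from the slot doubled by $s_j^g$, so the desired equality holds up to the index shift. For axiom (4'), expanding $d_is_i^g$ and $d_{i+1}s_i^g$ shows the two sums agree except in a single term where $x_i$ is bracketed against the doubled copy of itself, producing $[x_i,x_i]$; hence (4') holds iff $[x,x]=0$ for all $x\in X$. For axiom (3) with $i>j+1$, $d_i$ acts on a tuple with two copies of $x_j$ at adjacent slots: the two possible choices of which copy to bracket produce $[x_j,x_i]\otimes x_j$ and $x_j\otimes[x_j,x_i]$ at slots $j,j+1$, whereas $s_j^g d_{i-1}$ first brackets $x_j$ with $x_i$ and then doubles, giving $[x_j,x_i]\otimes[x_j,x_i]$. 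Equating these is exactly the relation generating the ideal $\mathcal{I}$, so axiom (3) holds after quotienting by $\mathcal{I}$.

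\textbf{Main obstacle.} The least mechanical step is the chain-homotopy argument in Part (2): identifying $s_0^p d_0$ with multiplication by $\epsilon(1)$ requires a careful case split on whether the first tensor factor lies in $V'$ or equals $1$, and on checking that the symmetrized insertion $(1,x_i)+(x_i,1)$ interacts correctly with $d_0$'s bracketing. Part (1) is mechanical once the Leibniz identity is invoked, and Part (3) is index-tracking with the nontrivial conditions ($[x,x]=0$ and the ideal $\mathcal{I}$) emerging naturally from the failure points.
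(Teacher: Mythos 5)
Parts (1) and (3) of your plan are essentially right, and they are the general form of what the paper does (the paper itself only checks the two lowest cases $d_0d_1=d_0d_0$ and $d_1d_2=d_1d_1$ and leaves the rest as an exercise): your four-way classification of the terms of $d_id_j$ versus $d_{j-1}d_i$, with the Leibniz/BLL identity $[x_\ell,[x_i,x_j]]=[[x_\ell,x_i],x_j]-[[x_\ell,x_j],x_i]$ (the module axiom when $\ell=-1$) absorbing the mixed terms, is correct; and your identification of $[x_i,x_i]$ as the obstruction to (4') and of $[x_j,x_i]\otimes[x_j,x_i]$ versus $[x_j,x_i]\otimes x_j+x_j\otimes[x_j,x_i]$ as the obstruction to axiom (3) for $i>j+1$ is exactly the content of part (3).

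Part (2), however, has a genuine error at its key step. The composite $s_0^pd_0$ is \emph{not} multiplication by $\epsilon(1)$: on $(\lambda;x_0,x_1,\ldots,x_n)$ the face $d_0$ gives $(\lambda\epsilon(x_0);x_1,\ldots,x_n)$, and $s_0^p$ then replaces $x_1$ by $(1,x_1)+(x_1,1)$ (or $(1,1)$), so $s_0^pd_0$ multiplies by $\epsilon(x_0)$ and inserts a unit; it is not a scalar multiple of the identity, and Lemma \ref{Lemma 6.5} therefore gives no information about $\epsilon(1)$. The scalar $\epsilon(1)$ enters only when a face bracket hits an inserted unit from the module slot: $[x,1]=0$ for $x\in V$, while $[\lambda,1]=\lambda\,\epsilon(1)$ for $\lambda\in M=k$. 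The standard repair is an ``extra degeneracy'' not among the $s_i^p$: set $h_n(\lambda;x_0,\ldots,x_n)=(-1)^{n+1}(\lambda;x_0,\ldots,x_n,1)$. For $i\le n$ one has $d_i(\lambda;x_0,\ldots,x_n,1)=(d_i(\lambda;x_0,\ldots,x_n),1)$, and $d_{n+1}(\lambda;x_0,\ldots,x_n,1)=\epsilon(1)(\lambda;x_0,\ldots,x_n)$ since only the $j=-1$ summand survives; with these signs $\partial h+h\partial=\epsilon(1)\,\mathrm{Id}$. Since appending $1$ in the last slot commutes with every $s_i^p$, the homotopy $h$ preserves the degenerate subcomplex, so both the homology and the degenerate homology are annihilated by $\epsilon(1)$, which is the claim your argument was meant to reach. (Your verification of the weak simplicial axioms for $s_i^p$ is fine in outline; note only that axiom (3) for $i>j+1$ silently uses that the bracket $[x_j,x_{i-1}]$ is compatible with the splitting $V=V'\oplus k1$, which is where the ``split unital'' hypothesis is really used.)
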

We leave the proof as an exercise for the reader however we make the calculation in two small but
typical cases which show that our axioms are needed:\\
(i) Comparison of $d_0d_1$ with $d_0d_0$ (they should be equal):\\
$d_0d_1(m;x_0,x_1)=d_0(([m,x_1];x_0) + (m;[x_0,x_1]))= ([m,x_1],x_0) + [m,[x_0,x_1]]$,\\
$d_0d_0(m;x_0,x_1)=d_0([m,x_0];x_1))= [[m,x_0)],x_1]$,\\
Thus $d_0d_1=d_0d_0$ if and only if $([m,x_1],x_0] + [m,[x_0,x_1]= [[m,x_0],x_1]$ which is the axiom of BLL-module.\\
(ii) Comparison of $d_1d_2$ with $d_1d_1$ (they should be equal):\\
$d_1d_2(m;x_0,x_1,x_2) = d_1(([m,x_2];x_0,x_1) + (m;[x_0,x_2],x_1) + (m;x_0,[x_1,x_2]))=$\\
$([[m,x_2],x_1];x_0) + (([m,x_2];[x_0,x_1]) + $\\
$([m,x_1];[x_0,x_2]) + (m;[[x_0,x_2],x_1]) + $\\
$([m,[x_1,x_2]];x_0)  + (m;[x_0,[x_1,x_2]]),$\\
and\\
$d_1d_1(m;x_0,x_1,x_2) = d_1([m,x_1];x_0,x_2) + (m;[x_0,x_1],x_2))=$\\
$([[m,x_1],x_2];x_0) + ([m,x_1];[x_0,x_2]) + $\\
$([m,x_2];[x_0,x_1]) + (m;[[x_0,x_1],x_2]).$\\
Thus $d_1d_2=d_1d_1$ if and only if the following sum is equal to zero:
$$([[m,x_2],x_1];x_0) + ([m,[x_1,x_2]];x_0) - ([[m,x_1],x_2];x_0) +$$
$$(m;[[x_0,x_2],x_1]) +  (m;[x_0,[x_1,x_2]]) - (m;[[x_0,x_1],x_2]).$$

The first part is equal to zero iff $M$ is BLL-module and the second part is equal to zero
iff $V$ is BLL-algebra.

\begin{remark}\label{Remark 7.4}
Lie algebra homology, as proved by Cartan and Eilenberg \cite{Ca-E} can be obtained from homology 
of the universal enveloping algebra $UV=TV/(a\otimes b - b\otimes a=[a,b])$ of the Lie algebra $V$. 
One hopes for a similar connection between distributive homology and homology of the group  associated 
to a wrack or quandle. One hint in this direction is that in every group the following ``distributivity" 
holds:
$$[[x,y^{-1}],z]^y[[y,z^{-1}],x]^z[[z,x^{-1}],y]^x=1,$$
where $[x,y]=x^{-1}y^{-1}xy$ and $x^y=y^{-1}xy$. This leads to the graded Lie algebra associated to the group,
via lower central series of the group \cite{Va}. 
\end{remark}
 
\section{Semigroup extensions and shelf extensions}\label{Section 8}
The theory of extension of structures and related cocycles started from two important examples from group theory:\\
(i) The extension of $PSL_n(C)$ by $SL_n(C)$ by I.Schur (1904), with related short exact sequence of groups \cite{B-T}
$$0 \to \Z_2 \to SL_n(C) \to PSL_n(C) \to 1  \mbox{ and}$$
the study of crystallographic groups $\Gamma$  where we consider a short exact sequence
$$0 \to \Z^n \to \Gamma \to \Gamma/\Z^n \to 1. $$ 
An extension of a group $X$ by a group $N$ is a short exact sequence of groups
$$1\to N \stackrel{i}{\rightarrow} E \stackrel{\pi}{\rightarrow} X \to 1$$
(some people call this an extension of $N$ by $X$ \cite{B-T,Bro,Ma-Bi}).\\
Consider a set-theoretic section $s:X \to E$ (that is $\pi s=Id_X$). Every element of $E$ is a unique product 
$as(x)$ for $a\in N$ and $x\in X$ (coset decomposition), thus  we have $E= N\times X$ as sets 
(here $e \to (es(\pi(e^{-1})),\pi(e))$ and $es(\pi(e^{-1})) (s\pi(e))=e$ as needed. The inverse 
map is $(a,x) \to as(x)$.

This motivates study of extension of magmas as study of projections $\pi: A\times X \to X$ with various structures 
preserved. In particular, we compare  semigroup extension of a semigroup by an abelian group 
with the shelf extension of a shelf 
by an Alexander quandle.
We start from a general concept of a dynamic cocycle in a magma case and then in associative and distributive
cases and in both we relate to  the (co)homology of our structures. 
Extension of modules, groups and Lie algebras is described in the classical book by Cartan and Eilenberg \cite{Ca-E},
distributive case was developed in \cite{CES-2,A-G,CKS}.

\begin{definition}\label{Definition 8.1}
Let $(X;*)$ be a magma, $A$ a set, and $\pi: A\times X \to X$ the projection to 
the second coordinate. Any magma structure on $A\times X $ for which $\pi$ is an epimorphism,
can be given by a system of functions $\phi_{a_1,a_2}(x_1,x_2): X\times X \to A$ by:
$$(a_1,x_1)*(a_2,x_2)= (\phi_{a_1,a_2}(x_1,x_2), x_1*x_2).$$ 
Functions $\phi_{a_1,a_2}(x_1,x_2)$ are uniquely  defined by the multiplication on $A\times X $, 
thus binary operations on $A\times X$ agreeing with $\pi$ are in bijection with choices of functions 
$\phi_{a_1,a_2}$.
If we require some special structure on  $(X;*)$ (e.g. associativity or right-distributivity) we obtain
some property of $\phi_{a_1,a_2}(x_1,x_2)$ which we call a dynamical co-cycle property for the structure.
\begin{enumerate}
\item[(1)] Let $(X;*)$ be a semigroup; in order that an action on $A\times X$ is associative we 
need:
$$((a_1,x_1)*(a_2,x_2))*(a_3,x_3)=(\phi_{a_1,a_2}(x_1,x_2), x_1*x_2)*(a_3,x_3)=$$
$$ (\phi_{\phi_{a_1,a_2}(x_1,x_2),a_3}(x_1*x_2,x_3), (x_1*x_2)*x_3)$$
to be equal to 
$$(a_1,x_1)*((a_2,x_2))*(a_2,x_3))= (a_1,x_1)*(\phi_{a_2,a_3}(x_2,x_3), x_2*x_3)= $$
$$ (\phi_{a_1,\phi_{a_2,a_3}}(x_2,x_3))(x_1,x_2*x_3),x_1*(x_2*x_3)).$$
Thus the dynamical cocycle condition in the associative case has a form:
$$(\phi_{\phi_{a_1,a_2}(x_1,x_2),a_3}(x_1*x_2,x_3)= \phi_{a_1,\phi_{a_2,a_3}(x_2,x_3)}(x_1,x_2*x_3)). $$
\item[(2)] Let $(X;*)$ be a shelf; in order that an action on $A\times X$ is right self-distributive 
we need:
$$((a_1,x_1)*(a_2,x_2))*(a_2,x_3)=(\phi_{a_1,a_2}(x_1,x_2), x_1*x_2)*(a_3,x_3)=$$
$$ (\phi_{\phi_{a_1,a_2}(x_1,x_2),a_3)}(x_1*x_2,x_3), (x_1*x_2)*x_3)$$
to be equal to 
$$((a_1,x_1)*(a_3,x_3))*((a_2,x_2)*(a_3,x_3))=$$ 
$$  (\phi_{a_1,a_3}(x_1,x_3), x_1*x_3)* (\phi_{a_2,a_3}(x_2,x_3), x_2*x_3)=$$
$$ (\phi_{\phi_{a_1,a_3}(x_1,x_3),\phi_{a_2,a_3}(x_2,x_3)}( x_1*x_3,x_2*x_3), (x_1*x_3)*(x_2*x_3)). $$
Thus the dynamical cocycle condition in right-distributive case has a form:
$$\phi_{\phi_{a_1,a_2}(x_1,x_2),a_3}(x_1*x_2,x_3)=
\phi_{\phi_{a_1,a_3}(x_1,x_3),\phi_{a_2,a_3}(x_2,x_3)}( x_1*x_3,x_2*x_3).$$
\item[(3)] We assume now that $(X;*)$ is an entropic (inner turn) magma, that is 
$(a*b)*(c*d)= (a*c)*(b*d)$ for any $a,b,c,d \in X$. We look for condition on the dynamical co-cycle 
so that $A\times X$ is entropic. We need
$$((a_1,x_1)*(a_2,x_2))*((a_3,x_3)*(a_4,x_4))=$$
$$  (\phi_{a_1,a_2}(x_1,x_2), x_1*x_2)* (\phi_{a_3,a_4}(x_3,x_4), x_3*x_4)=$$
$$ (\phi_{\phi_{a_1,a_2}(x_1,x_2),\phi_{a_3,a_4}(x_3,x_4)}( x_1*x_2,x_3*x_4), (x_1*x_2)*(x_3*x_4))$$
to be equal to
$$((a_1,x_1)*(a_3,x_3))*((a_2,x_2)*(a_3,x_3))=$$
$$  (\phi_{a_1,a_3}(x_1,x_3), x_1*x_3)* (\phi_{a_2,a_4}(x_2,x_4), x_2*x_4)=$$
$$ (\phi_{\phi_{a_1,a_3}(x_1,x_3),\phi_{a_2,a_4}(x_2,x_4)}( x_1*x_3,x_2*x_4), (x_1*x_3)*(x_2*x_4)).$$
Thus the dynamic cocycle condition in entropic case has the form:
$$\phi_{\phi_{a_1,a_2}(x_1,x_2),\phi_{a_3,a_4}(x_3,x_4)}( x_1*x_2,x_3*x_4)=
  \phi_{\phi_{a_1,a_3}(x_1,x_3),\phi_{a_2,a_4}(x_2,x_4)}( x_1*x_3,x_2*x_4).$$
\end{enumerate}
\end{definition}

We illustrate the above by several examples, starting from a classical group extension by an abelian group.
Consider the extension $E$ of a group $X$ by an abelian group $A$; this is described by a short exact sequence 
of groups:
$$ 0\to A \to E \stackrel{\pi}{\rightarrow} X \to 1 $$
As noted before,
 $E= A\times X$ as a set and bijection depends on a section $s: X  \to E$. Furthermore $X$ acts on $A$ 
(we have $X\times A \to A$) and the action is given by conjugation: $x(a)= s(x)a(s(x)^{-1}$ and does not 
depend on the choice of $s$ as $A$ is commutative).\\
For a semigroup this is the starting point.\\
Let $X$ be a semigroup, $\pi: A\times X$ a projection and a semigroup $X$ act on a set $A$. 
We define a product on $A\times X$ by the formula: 
$$(a_1,x_1)(a_2,x_2)= (a_1 + x_1(a_2) + f(x_1,x_2), x_1x_2).$$ 
The  function $f:X\times X \to A$, as in the group case,  arise by comparing section of a multiplication 
with multiplication of sections, that is $s(x_1x_2)=f(x_1,x_2)s(x_1)s(x_2)$.  
We assume that the action $x: A \to A$ is 
a group homomorphism for any $x$ and it is associative ($x_1(x_2(a)= (x_1x_2)(a)$). The associativity 
of the product on $A\times X$ is equivalent to the 
condition on $f:X\times X \to A$ of the form\footnote{Calculation is as follows: Associativity,
$$((a_1,x_1)(a_2,x_2))(a_3,x_3)= (a_1,x_1)((a_2,x_2))(a_3,x_3))\mbox{ gives, after expanding each side:}$$
$$((a_1,x_1)(a_2,x_2))(a_3,x_3)= (a_1+ x_1(a_2)+ f(x_1,x_2), x_1x_2)(a_3,x_3)=$$
$$(a_1+ x_1(a_2)+ f(x_1,x_2) + (x_1x_2)(a_3) + f(x_1x_2,x_3), (x_1x_2)x_3) \mbox{ and}$$
$$(a_1,x_1)((a_2,x_2))(a_3,x_3))= (a_1,x_1)(a_2+ x_2(a_3)+ f(x_2,x_3),x_2x_3)=$$
$$(a_1+ x_1(a_2+ x_2(a_3)+ f(x_2,x_3) + f(x_1,x_2x_3), x_1(x_2x_3))$$ 
thus the associativity reduces to:
$$f(x_1,x_2)+ f(x_1x_2,x_3) =x_1(f(x_2,x_3)) + f(x_1,x_2x_3) \mbox{ which is our 2-cocycle condition.}$$
}:
$x_1(f(x_2,x_3)) - f(x_1x_2,x_3) + f(x_1,x_2x_3) - f(x_1,x_2)=0$ which we call a second cocycle 
condition (relation to homology of groups defined before, will be explained).
Thus $\phi_{a_1,a_2}(x_1,x_2)= a_1 + x_1(a_2) + f(x_1,x_2)$, is an example of a dynamical cocycle 
for an associative structure.
We should stress that for a semigroup there may be choice for a dynamical cocycle but for a group it is 
unique (see e.g. \cite{Bro}).
$f:X\times X \to A $ is a cocycle for a chain complex introduced in Definition 4.1 for the trivial action 
(and generally Definition 4.2); we have:
$$\partial^2(f)(x_1,x_2,x_3)=$$
$$ f(\partial_2((x_1,x_2,x_3))= f((x_2,x_3)-(x_1x_2,x_3)+ (x_1,x_2x_3)-(x_1,x_2))=$$
$$f(x_2,x_3) - (x_1x_2,x_3) + f(x_1,x_2x_3) - f(x_1,x_2)=0.$$ 
If action of $X$ on $A$ is not necessarily trivial, we define cohomology $H^n(G,C)$ with a cochain complex
$C_n=Hom (ZG^n \to A)$ and \\
 $\partial^n: C^n \to C^{n+1}$ is given by $\partial^n(f)(x_1,...,x_n,x_{n+1})=$ 
$$x_1f(x_2,...,x_{n+1}) +\sum_{i=1}^{n}(-1)^if(x_1,...,x_ix_{i+1},...x_{n+1}) + (-1)^{n+1}(x_1,...,x_{n}).$$

\subsection{Extensions in right distributive case} 
We give here two examples of extension in right distributive case:\\
I. 
Let $(X;*)$ be a shelf and $A$ an abelian group with a given homomorphism $t: A \to A$ (equivalently, 
$A$ is a ${\Z}[t]$ module). We define a shelf structure (called Alexander extension \cite{CES-2,CKS}) on $A\times X$ 
by the formula: 
$$(a_1,x_1)(a_2,x_2)= (ta_1 + (1-t)a_2+ f(x_1,x_2), x_1*x_2)$$ and right distributivity is equivalent to the
condition on $f:X\times X \to A$ which satisfies
 twisted cocycle condition:
$$t(f(x_2,x_3)-f(x_1,x_3) + f(x_1,x_2)) - f(x_2,x_3) + f(x_1*x_2,x_3) - f(x_1*x_3,x_2*x_3)=0.$$ 
The calculation is as follows: Right self-distributivity
$$((a_1,x_1)*(a_2,x_2))*(a_3,x_3)= ((a_1,x_1)*(a_3,x_3))*((a_2,x_2)*(a_3,x_3))$$
 gives, after expanding each side:
$$((a_1,x_1)*(a_2,x_2))*(a_3,x_3)= (ta_1+ (1-t)a_2 + f(x_1,x_2), x_1*x_2)*(a_3,x_3)=$$
$$(t(ta_1+ (1-t)a_2+ f(x_1,x_2)) + (1-t)a_3 + f(x_1*x_2,x_3), (x_1*x_2)*x_3) \mbox{ and}$$
$$((a_1,x_1)*(a_3,x_3))*((a_2,x_2)*(a_3,x_3))= $$
$$(ta_1 + (1-t)a_3+ f(x_1,x_3), x_1*x_3)*(ta_2 + (1-t)a_3+ f(x_2,x_3), x_2*x_3)=$$
$$(t(ta_1 + (1-t)a_3+ f(x_1,x_3))+ (1-t)(ta_2 + (1-t)a_3+ $$
$$ f(x_2,x_3))+ f(x_1*x_3,x_2*x_3), (x_1*x_3)*(x_2*x_3))$$
This is equivalent to: $$tf(x_1,x_2) + f(x_1*x_2,x_3) = tf(x_1,x_3) + (1-t)f(x_2,x_3)) + f(x_1*x_3,x_2*x_3),$$
and further to a cocycle in a (twisted) rack homology:
$$(\partial^Rf)(x_1,x_2,x_3)=$$
$$ -t(f(x_2,x_3)-f(x_1,x_3))+ f(x_1,x_2)) + $$
$$f(x_2,x_3) - f(x_1*x_2,x_3) +f(x_1*x_3,x_2*x_3)=0.$$
If
there are two right self-distributive binary operations, $*_1$ and $*_2$ on $A\times X$ represented by $f_1$ and $f_2$
respectively (that is $(a_1,x_1)*_i(a_2,x_2)= (a_1*a_2 + f_i(x_1,x_2),x_1*x_2$, $i=1,2$), and there is
a homomorphism $H:A\times X \to A\times X$ given by $H(a,x)= (a+c(x),x)$ for some $c:X\to A$  then
the homomorphism condition\\
 $H((a_1,x_1)*_1(a_2,x_2))= H(a_1,x_1) *_2  H(a_2,x_2)$
is equivalent to
$$(a_1*a_2 + f_1(x_1,x_2) +c(x_1*x_2), x_1*x_2) = $$
$$ ((a_1+c(x_1))*(a_2+c(x_2))+  f_2(x_1,x_2),x_1*x_2) \mbox { thus}$$
$ta_1+ (1-t)a_2 + c(x_1*x_2) + f_1(x_1,x_2)= t(a_1+c(x_1) + (1-t)(a_2+c(x_2))+ f_2(x_1,x_2)$ so
$f_1(x_1,x_2) - f_2(x_1,x_2)= tc(x_1) + (1-t)c(x_2)- c(x_1x_2)= (\partial c)(x_1,x_2)$.
We can say that the second cohomology (here (twisted) rack cohomology) $H^2(X,A)$ describes (shelf)
extensions of $X$ by $A$ of type described above, modulo described above equivalence, for an abelian group $A$. 

The dynamical cocycle is given by $\phi_{a_1,a_2}(x_1,x_2)= ta_1 + (1-t)a_2+ f(x_1,x_2)$, \cite{CKS}.\\
II. Another family of extensions is given by the hull construction for a multi-shelf (multi-RD-system)
 of Patrick Dehornoy and David Larue \cite{Deh-2,Lar},
and its ``$G$-group" generalization (which we call a {\it twisted hull}) by Ishii, Iwakiri, Jang, and Oshiro \cite{IIJO}.\\
For a hull construction we need a distributive set of binary operations on $A$ indexed by elements of $X$,
that is $(a*_xb)*_yc=(a*_yc)*_x(b*_yc)$, and the ``hull" shelf structure on $A\times X$ is given by:
$$(a_1,x_1)*(a_2,x_2)=(a_1*_{x_2}a_2,x_1).$$ To see our construction as obtained from a dynamical 
cocycle we put trivial operation on $X$ ($x*y=x$), and the dynamical cocycle is given by
$\phi_{a_1,a_2}(x_1,x_2)= a_1*_{x_2}a_2$. \\

\begin{remark}\label{Remark 8.2}
 If $A=F(X)$ is a fee group on free generators $X$, then the hull $A\times X$ related to 
the distributive set of operations $*_x$ on $A$ given by $a_1*_xa_2=  a_1a_2^{-1}xa_2$ is a free rack 
generated by $X$ (denoted by $FR(X)$) as defined by Fenn and Rourke \cite{F-R} (see also \cite{CKS}). 
To summarize, we have then $FR(X)=F(X)\times X$ with $(a_1,x_1)*(a_2,x_2)=(a_1*_{x_2}a_2,x_1)=(a_1a_2^{-1}x_2a_2,x_1)$. 
\end{remark}

The $G$-group generalization of hull to twisted hull, relaxes 
condition that $X$ is indexing distributive set of operation and we allow ``twisted distributivity".
That is: $(a*_xb)*_yc=(a*_yc)*_{x*y}(b*_yc)$. Thus $X$ indexes operations on $A$ satisfying 
``twisted distributivity". In this case the shelf structure on $A\times X$ is given by:
$$(a_1,x_1)*(a_2,x_2)=(a_1*_{x_2}a_2,x_1*x_2).$$ The fundamental example leading to ``twisted distributivity"
was already given by Joyce: Let $G$ be a group and $X$ be a subgroup of $hom(G,G)$. Then we define 
$g_1*_{x}g_2= x(g_1g_2^{-1})g_2$ and we get:
$$(g_1*_{x_2}g_2)*_{x_3}g_3=(g_1*_{x_3}g_3)*_{x_2*x_3}(g_2*_{x_3}g_3)$$ 
where $x_2*x_3=x_3x_2x_3^{-1}$. \\
Twisted distributivity is illustrated in Figure 8.1 below:\\ \ \\
\centerline{\psfig{figure=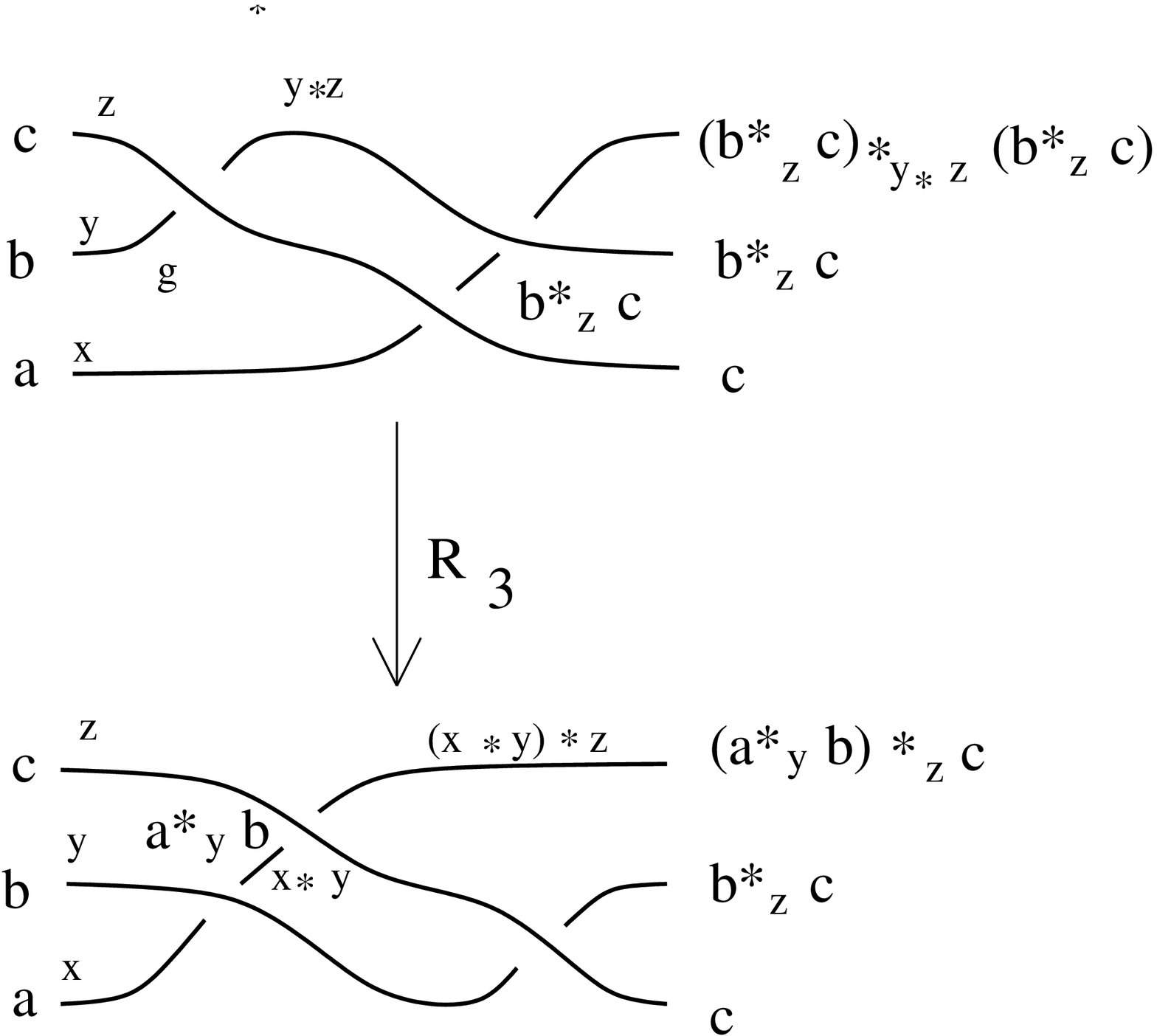,height=8.8cm}}\\ \ \\
\centerline{Figure 8.1; Twisted distributivity} 
\subsection{Extensions in entropic case}\label{Subsection 8.2}
Let $(X;*)$ be an entropic magma, that is $*$ satisfies the entropic identity:
$(a*b)*(c*d)= (a*c)*(b*d)$. Let also  $A$ be an abelian group with a given pair of commuting 
homomorphisms $t,s: A \to A$ and a constant $a_0\in A$; we consider $(A;*)$ as an entropic magma 
with an affine action $a*b= ta +sb + a_0$.
Then we define a binary operation on $A\times X$ by 
$(a_1*x_1)*(a_2*x_2) = (a_1*a_2 + f(x_1,x_2),x_1*x_2)$.
In order for $A\times X$ to be entropic magma we need entropic condition, or equivalently 
$\phi_{a_1,a_2}(x_1,x_2) = a_1*a_2 + f(x_1,x_2)$ should be and entropic dynamic cocycle.
This leads to entropic cocycle condition:
$$tf(x_1,x_2) - tf(x_1,x_3) +sf(x_3,x_4) - sf(x_2,x_4) + f(x_1*x_2,x_3*x_4) - f(x_1*x_3,x_2*x_4)
=0.\footnote{Calculation
is as follows: Entropic condition,
$((a_1,x_1)*(a_2,x_2))*((a_3,x_3)*(a_4,x_4))= ((a_1,x_1)*(a_3,x_3))*((a_2,x_2)*(a_4,x_4))$
 gives, after expanding each side:\\
$((a_1,x_1)*(a_2,x_2))*((a_3,x_3)*(a_4,x_4))=$
$ (a_1*a_2 + f(x_1,x_2),x_1*x_2)*(a_3*a_4 + f(x_3,x_4),x_3*x_4) =$
$((a_1*a_2)*(a_3*a_4) + tf(x_1,x_2) + sf(x_3,x_4) + f(x_1*x_2,x_3*x_4),(x_1*x_2)*(x_3*x_4))$\\
Similarly $((a_1,x_1)*(a_3,x_3))*((a_2,x_2)*(a_4,x_4))=$
$((a_1*a_3)*(a_2*a_4) + tf(x_1,x_3) + sf(x_2,x_4) + f(x_1*x_3,x_2*x_4),(x_1*x_3)*(x_2*x_4))$, which
reduces to (entropic) cocycle condition
$ tf(x_1,x_2) + sf(x_3,x_4) + f(x_1*x_2,x_3*x_4) = tf(x_1,x_3) + sf(x_2,x_4) + (x_1*x_3,x_2*x_4).$
}
$$

The above formula may serve as a hint how to define (co)homology in entropic case \cite{N-P-4}.

In particular $\partial: RX^4 \to RX^2$ may be given by:
$$\partial(x_1,x_2,x_3,x_4)= $$
$$t(x_1,x_2) - t(x_1,x_3) +s(x_3,x_4) - s(x_2,x_4) + (x_1*x_2,x_3*x_4) - (x_1*x_3,x_2*x_4)$$
and $\partial: RX^2 \to RX$  may be given by:
$\partial(x_1,x_2)= tx_1 - x_1*x_2 + sx_2$ (here it agrees with the rack case for $s=1-t$).

\begin{remark}
An important, but not yet fully utilized, observation from \cite{N-P-4}, is that we can 
consider atomic boundary functions\\
 $\partial^{(*)}((x_1,x_2,x_3,x_4)=  (x_1*x_2,x_3*x_4) - (x_1*x_3,x_2*x_4)$
and $\partial^{(*)}(x_1,x_2)= - x_1*x_2$, and consider also the left trivial binary operations $x*_0y=x$, 
 and  the right trivial binary operations $x*_{\sim}y=y$ and then to recover $\partial$ as 
a three term entropic boundary function, for the multi-entropic system $(*,*_0,*_{\sim})$,
 by the formula $\partial = \partial^{(*)}- t\partial^{(*_0)}- s\partial^{(*_{\sim})}$.
\end{remark}


\section{Degeneracy for a weak and very weak simplicial module}\label{Section 9}

We expand here on Subsections 3.2 and 3.3 and discuss degenerate part of distributive homology 
in the general context of weak and very weak simplicial modules.

Quandle homology is build in analogy to group homology or Hochschild homology of associate structures.
In the unital associative case we deal with simplicial sets (or modules) and it is a classical result
of Eilenberg and Mac Lane
that the degenerate part of a chain complex is acyclic so homology and normalized homology are isomorphic
(see Subsection 3.2).
It is not the  case for distributive structures, e.g. for quandles or spindles. Quandle homology or even one
term distributive homology of spindles may have nontrivial degenerate part. The underlining homological 
algebra structure is a weak simplicial module and in this case the degenerate part is not necessarily 
acyclic and the best one can say is that the
degenerate part has a natural filtration so yields a spectral sequence which can be used to study
degenerate homology. In the concrete case of quandle homology (motivated by and applicable to knot theory)
it is proven that the homology (called the rack homology) splits into degenerate and normalized (called the
quandle homology) parts \cite{L-N}. Otherwise no clear general connection between degenerate and quandle
part were observed. We prove
in the joint paper with Krzysztof Putyra \cite{Pr-Pu-2}
that the degenerated homology of a quandle is fully determined by quandle homology via a K\"unneth type formula.


\section{Degeneracy for a weak simplicial module}\label{Section 10}

Here we give a few general observations about degenerate part of a weak simplicial module. They are related 
to concrete work in the distributive case done in \cite{Pr-Pu-2}.

Consider a weak simplicial module $(C_n,d_i,s_i)$ (see Subsection 3.2 and \cite{Prz-5}).
As checked in Corollary 3.4, the filtration by degenerate elements $F^p_n=span(s_0(C_{n-1}),...,s_p(C_{n-1}))$ 
is preserved by the boundary operation $\partial_n=\sum_{i=0}^n(-1)^id_i$. In Subsection 3.3 we constructed 
a degenerate bicomplex $(E^0_{i,j},d^v,d^h)$

We discuss here the fact that a weak simplicial complex has also dual filtration (or better to say 
it has  left and write filtrations). 
We define the dual (or opposite) filtration $\hat F^p_n=span(s_{n-1}(C_{n-1}),...,s_{n-p}(C_{n-1}))$.

We start our dual description from a presimplicial module:

If $(C_n;d_i)$ is a presimplicial module then we define $\hat d_i = d_{n-i}$ and notice that
$(C_n;\hat d_i)$ is also a presimplicial module with $\hat\partial_n = (-1)^n\partial_n$ and
unchanged homology. More generally we have:
\begin{proposition} \label{Proposition 10.1}
\item[(i)] If $(C_n;d_i)$ is a presimplicial module then $(C_n;\hat d_i)$ is also a presimplicial module.
\item[(ii)] If $s_i: C_n \to C_{n+1}$, $0\leq i \leq n$ are degenerate map, define $\hat s_i= s_{n-i}$.
Then if $(C_n;d_i,s_i)$ is a (weak, or very weak) simplicial module then $(C_n;\hat d_i,\hat s_i)$ is also a a
(weak or very weak) simplicial module.
\end{proposition}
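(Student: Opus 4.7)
My plan is to verify Proposition \ref{Proposition 10.1} by showing that the index-reversing involution $i \mapsto n-i$ (at each level) is a bijection on indices that carries every defining relation of a (weak, or very weak) simplicial module to another instance of the same relation. The entire proof is bookkeeping: compute each composition, rewrite the indices, and apply the original axiom with shifted indices. Since the axioms in Definition \ref{Definition 3.3}, as well as the conditions (4') and the weaker variants used for multi-shelves, are all stable under this reindexing, the verification is uniform and does not really depend on which subset of axioms we assume. The only thing to watch is that $\hat d_i$ on $C_n$ is $d_{n-i}$, while after one application (say $\hat d_j$ on $C_n$) the next $\hat d_i$ lives on $C_{n-1}$ and thus equals $d_{(n-1)-i}$; the analogous shift happens for $\hat s_i$.

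For (i), the computation I would write out is the following. For $i < j$,
\[
\hat d_i \hat d_j = d_{(n-1)-i}\, d_{n-j}, \qquad \hat d_{j-1}\hat d_i = d_{(n-1)-(j-1)}\, d_{n-i} = d_{n-j}\, d_{n-i}.
\]
Because $i<j$ gives $n-j<n-i$, the original presimplicial relation $d_a d_b = d_{b-1} d_a$ with $a=n-j$, $b=n-i$ yields $d_{n-j}\, d_{n-i} = d_{n-i-1}\, d_{n-j} = d_{(n-1)-i}\, d_{n-j}$, which matches. So $(C_n;\hat d_i)$ is a presimplicial module.

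For (ii), I would run the same kind of check for each of the simplicial axioms. Axiom (2) for $i\le j$ reduces to $s_{n-j} s_{n-i} = s_{n-i+1}\, s_{n-j}$, which is axiom (2) applied to $n-j \le n-i$. For axiom (3) one splits into the two cases: if $i<j$ then $\hat d_i \hat s_j = d_{n+1-i} s_{n-j}$ and $\hat s_{j-1}\hat d_i = s_{n-j}\, d_{n-i}$, and since $(n+1-i)-(n-j) = j-i+1 \ge 2$ we are in the regime $d_a s_b = s_b d_{a-1}$ with $a=n+1-i$, $b=n-j$, which gives precisely the equality. Symmetrically, if $i>j+1$ then $(n-j)-(n+1-i)=i-j-1>0$ places us in the regime $d_a s_b = s_{b-1} d_a$, again producing the required identity. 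Axiom (4) (and its weakening (4')) similarly becomes, under the substitution, $d_{n+1-i} s_{n-i}$ and $d_{n-i} s_{n-i}$, which are precisely $d_{b+1} s_b$ and $d_b s_b$ for $b = n-i$; the original relation $d_b s_b = d_{b+1} s_b$ (equal to $Id$ in the strong case) gives the dual relation. In particular, whether one assumes the full equalities $= Id$ (simplicial), the equality without identity (weak simplicial), or neither (very weak simplicial), the translated statement is of the same type.

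The only conceptual obstacle is making sure the reindexing respects the grading of the complex, i.e.\ remembering that the ``hat'' of a face map on $C_{n-1}$ uses $n-1$ and the ``hat'' of a degeneracy on $C_{n+1}$ uses $n+1$; after that the verifications are purely mechanical. Since the involution is its own inverse (up to the level shift), $(C_n;d_i,s_i)$ and $(C_n;\hat d_i, \hat s_i)$ carry exactly the same structure, and one observes as a bonus that $\hat\partial_n = \sum_i (-1)^i \hat d_i = \sum_i (-1)^i d_{n-i} = (-1)^n \partial_n$, so the induced boundary differs only by the overall sign $(-1)^n$ and the homology is unchanged, as asserted in the discussion preceding the proposition.
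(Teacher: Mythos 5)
Your proposal is correct and matches the paper's own argument: the paper likewise proves (i) by the computation $\hat d_i\hat d_j = d_{(n-1)-i}d_{n-j} = d_{n-j}d_{n-i} = \hat d_{j-1}\hat d_i$ and handles (ii) via a lemma checking that each axiom (2), (3), (4') and (4) is carried to its hatted counterpart by the same index-reversal bookkeeping, including the level shifts you flag. Nothing is missing; even your closing remark that $\hat\partial_n=(-1)^n\partial_n$ reproduces the paper's observation preceding the proposition.
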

\begin{proof} (i) For $i<j$ we have $n-j<n-i$, so:
$$\hat d_i\hat d_j= \hat d_i d_{n-j}=d_{n-1-i}d_{n-j}\stackrel{(1)}{=} d_{n-j}d_{n-i}= \hat d_{j-1}\hat d_i.$$
(ii) For a better presentation let us list conditions of a simplicial module for $(C_n,\hat d_i, \hat s_i)$,
 one by one:
$$ (\hat 1) \ \ \  \hat d_i\hat d_j = \hat d_{j-1}\hat d_i\ for\ i<j. $$
$$(\hat 2)\ \ \ \hat s_i\hat s_j=\hat s_{j+1}\hat s_i,\ \ 0\leq i \leq j \leq n, $$
$$ (\hat 3) \ \ \ \hat d_i\hat s_j= \left\{ \begin{array}{rl}
 \hat s_{j-1}\hat d_i &\mbox{ if $i<j$} \\
\hat s_{j}\hat d_{i-1} &\mbox{ if $i>j+1$}
       \end{array} \right.
$$
$$ (\hat 4') \ \ \ \hat d_i\hat s_i=\hat d_{i+1}\hat s_i.$$
$$ (\hat 4) \ \ \ \hat d_i\hat s_i=\hat d_{i+1}\hat s_i= Id_{M_n}. $$
Proposition \ref{Proposition 10.1} follows from the following lemma.
\begin{lemma}\label{Lemma 10.2}
Consider $(C_n;d_i,s_i)$ and its complementary (dual) $(C_n;\hat d_i,\hat s_i)$, then
conditions $(x)$ and $(\hat x)$ are equivalent
\end{lemma}
\begin{proof} Equivalence of (1) and $(\hat 1)$ was already established. Other parts are equally simple but
we prove them for completeness:\\
$(2) \Leftrightarrow (\hat 2)$ (we assume $i\leq j$ or equivalently $n-j\leq n-i$): \\
$$\hat s_i\hat s_j= \hat s_i s_{n-j}=s_{n+1-i}s_{n-j}\stackrel{(2)}{=} s_{n-j}s_{n-i}= \hat s_{j+1}\hat s_i.$$
$(3) \Leftrightarrow (\hat 3)$ First assume that $i<j$ (i.e. $n+1-i > n-j+1$) then:\\
$$\hat d_i\hat s_j= d_{n+1-i}s_{n-j}\stackrel{(3)}{=} s_{n-j}d_{n-i}= \hat s_{j-1}\hat d_i.$$
Second assume that $i>j+1$ (i.e. $n+1-i <n-j$), then:\\
$$\hat d_i\hat s_j= d_{n+1-i}s_{n-j}\stackrel{(3)}{=} s_{n-j-1}d_{n-i+1}= \hat s_{j}\hat d_{i-1}.$$
$(4') \Leftrightarrow (\hat 4')$. We have: \\
$$\hat d_i\hat s_i= d_{n+1-i}s_{n-i}\stackrel{(4')}{=} d_{n-i}s_{n-i}= \hat d_{i+1}\hat s_i.$$
$(4) \Leftrightarrow (\hat 4)$. We have: \\
$$\hat d_i\hat s_i= d_{n+1-i}s_{n-i}\stackrel{(4)}{=} Id.$$ \\
\end{proof}
\end{proof}

\begin{remark}\label{Remark 10.3}
If $C_n=ZX^{n+1}$ then we can consider the map
$\hat I: (C_n;d_i) \to (C_n;\hat d_i)$ given by $\hat I (x_0,x_1,...,x_n)=(x_n,...,x_1,x_0)$
(or succinctly $\hat I ({\bf x})=\hat {\bf x})$).
\end{remark}

Our results ( Proposition \ref{Proposition 10.1} and Lemma \ref{Lemma 10.2} hold for very weak simplicial modules, 
weak simplicial modules, and simplicial modules.
In particular, for a weak simplicial module the dual filtration of $C^D_n$,
 $\hat F^p_n =span (\hat s_0(C_{n-1}), \hat s_1(C_{n-1}),..., \hat s_p(C_{n-1})$ leads to 
a spectral sequence and a bicomplex.
Here we give a few general remarks to summarize basic facts:\\
A weak simplicial module yields two filtrations:
$F^p_n$ and the dual (complementary) one $\hat F^p_n$.
By the definition we have
$$\partial_n= \sum_{i=0}^n (-1)^id_i= (-1)^n \sum_{i=0}^n (-1)^i\hat d_i=(-1)^n\hat\partial_n$$
Furthermore, on $s_p(C_{n-1})$ we have:
$$\partial_n s_p=\sum_{i=0}^n (-1)^id_is_p = \sum_{i=0}^{p-1}(-1)^id_is_p + (-1)^pd_ps_p + (-1)^{p+1}d_{p+1}s_p +
\sum_{i=p+2}^{n}(-1)^id_is_p \stackrel{(4')}{=}$$
$$\sum_{i=0}^{p-1}(-1)^id_is_p + \sum_{i=p+2}^{n}(-1)^id_is_p \stackrel{(3)}{=}$$
$$\sum_{i=0}^{p-1}(-1)^is_{p-1}d_i + \sum_{i=p+2}^{n}(-1)^is_pd_{i-1}.$$
Clearly $\sum_{i=0}^{p-1}(-1)^is_{p-1}d_i(C_{n-1})$ belongs to $F^{p-1}_n$.

The formulas above lead to the bicomplex with $E^0_{p,q}=M_{p,q}=F^p_n/F^{p-1}_n$, where $n=p+q$, \
$d^h=\sum_{i=0}^{p-1}(-1)^id_is_p$ and $d^v=\sum_{i=p+2}^{n}(-1)^id_is_p.$
The equality $d^hd^h=0=d^vd^v$ and $d^hd^v=-d^vd^h$ follows directly from the weak simplicial module structure.

If we replace the filtration $F^p_n$ by $\hat F^p_n$ we see that the spectral sequence is
modified; it is analogous, but not the same, as when $d^h$ is replaced by $d^v$.

\begin{remark}\label{Remark 10.4}
An acute observer will notice immediately\footnote{Victoria Lebed studied this before me in context of her
prebraided category.} that we deal not only with a bicomplex but also with pre-bisimplicial category (set or module).
For completeness I recall definitions after \cite{Lod-1}, page 459:\\
We define a bisimplicial object but in a same vain we can define pre-bisimplicial category, and weak bisimplicial
category:
`` By definition a bisimplicial object in a category $\mathcal C$ is a functor
$$\mathcal{X}: \Delta^{op}\times \Delta^{op} \to \mathcal C.$$
Such a bisimplicial object can be described equivalently by a family of objects $M_{p,q}$, $p\geq 0,q\geq 0$, together
with horizontal and vertical faces and degeneracies:
$$ d_i^h: M_{p,q} \to M_{p-1,q},\ \ \ s_i^h: M_{p,q} \to M_{p+1,q}, \mbox{ where } 0\leq i \leq p$$
$$d_i^v: M_{p,q} \to M_{p,q-1},\ \ \ s_i^v: M_{p,q} \to M_{p,q+1}, \mbox{ where } 0\leq i \leq q$$
which satisfy the classical simplicial relations horizontally and vertically and such that horizontal and vertical
operations commute. For any bisimplicial set $X$ there are three (homeomorphic) natural ways to make geometric realization,
$|X|$ of $X$. Loday notes that any bisimplicial set $X$ gives rise to the bisimplicial module $RX$ and
$H_*(|X|,R) = H_*(Tot(RX))$,  \cite{Lod-1}.
\end{remark}

\begin{example}\label{Example 10.5}
 A natural example of a (pre)-bisimplicial set or module is obtained by a Cartesian (or tensor)
product of (pre)-simplicial sets (or modules). Namely:
\begin{enumerate}
\item[($\times$)] Let $M_{p,q}= C_p\times C'_q$ where $(C_n,d_i)$ and $(C'_n,d'_i)$ are (pre)simplicial sets.
We define $d_i^h= d_i\times Id_{C'_q}$ and $d_i^v= Id_{C_p} \times d'_i$. In the case $(C_n,d_i,s_i)$ and
$(C'_n,d'_i,s'_i)$
are (weak)-simplicial sets we get $M_{p,q}$ a (weak)-simplicial set with
$s_i^h= s_i\times Id_{C'_q}$ and $s_i^v= Id_{C_p} \times s'_i$.
\item[($\otimes$)]  Let $M_{p,q}= C_p\otimes C'_q$ where $(C_n,d_i)$ and $(C'_n,d,_i)$ are (pre)simplicial modules.
Then $M_{p,q}$ is a (pre)simplicial modules with $d_i^h= d_i\otimes Id_{C'_q}$ and $d_i^v= Id_{C_p} \otimes d'_i$.
Similarly in the case $(C_n,d_i,s_i)$ and $(C'_n,d'_i,s'_i)$ are (weak)-simplicial modules.
\\

\end{enumerate}
\end{example}
\subsection{Right filtration of degenerate distributive elements}\label{Subsection 10.1}\ \\
We restrict ourselves here to a weak simplicial module yielded by a distributive structures.

Let $(X;*)$ be a spindle that is a magma which is right distributive ($(a*b)*c=(a*c)*(b*c)$)
and idempotent ($a*a=a$).

\begin{definition}\label{Definition 10.6}
\begin{enumerate}
\item[(i)]
Let $\hat s_i= s_{n-i}: C_n \to C_{n+1}$ is given by
$$\hat s_i (x_0,x_1,...,x_n)= (x_0,..,x_{n-i-1},x_{n-i},x_{n-i}, x_{n-i+1},...,x_n),$$ that is we double the letter
on the position $n-i$ (or $i$ from the end) if we count from zero.
\item[(ii)] We define $\hat F^p_n = span(\hat s_0(C_{n-1}), \hat s_1(C_{n-1}),...,\hat s_{n-1}(C_{n-1}))$
in $C_n(X)$. $\hat F^p_n$ form a boundary coherent filtration of $C_n(X)$:
$$ 0\subset \hat F^0_n \subset  \hat F^1_n\subset  \hat F^{n-1}_n=  C^D_n.$$
\item[(iii)] Let $ \hat Gr_p^n$ be the associated graded group: $ \hat Gr_p^n=  \hat F^p_n/ \hat F_{p-1}^n$.
\end{enumerate}
\end{definition}
If, as before, we define face maps $\hat d_i= d_{n-i}$ then $(C_n,\hat d_i,\hat s_i)$ is a
weak simplicial module. Thus $\hat F^p_n$ is a graded filtration ripe for the spectral sequence.
(We already noticed that $\hat\partial_n=\sum_{i=0}^n(-1)^i\hat d_i = (-1)^n\partial_n$.)

We consider the spectral sequence of the filtration starting from the initial page 
$ \hat E^0_{p,q}=  \hat Gr^p_{p+q}$. It is the first main observation of \cite{Pr-Pu-2} that the spectral 
sequence $(\hat E^r_{p,q},\hat \partial^r_{p,q})$ stabilizes on the first page, and eventually one term 
spindle homology can be computed easily from the normalized part.

\subsection{Integration maps $\hat u_i: \hat F^p_n \to \hat F^{p-1}_n$}\ \\
The main tools to show that the spectral sequence $\hat E^r_{p,q}$ stabilizes on the first page are 
the maps (which we can call integration)
 $\hat u_i: \hat F^p_n \to \hat F^{p-1}_n$, illustrated in Figure 10.1 below\footnote{In braid
notation, $\hat u_i$ can be expressed as $\sigma_p\sigma_{p-1}...\sigma_{i+1}\sigma_{i}$.}, they serve to show that
the right degenerated filtration spectral sequence has all $d^r$ ($r>0$) trivial and that
homology splits.

\centerline{\psfig{figure=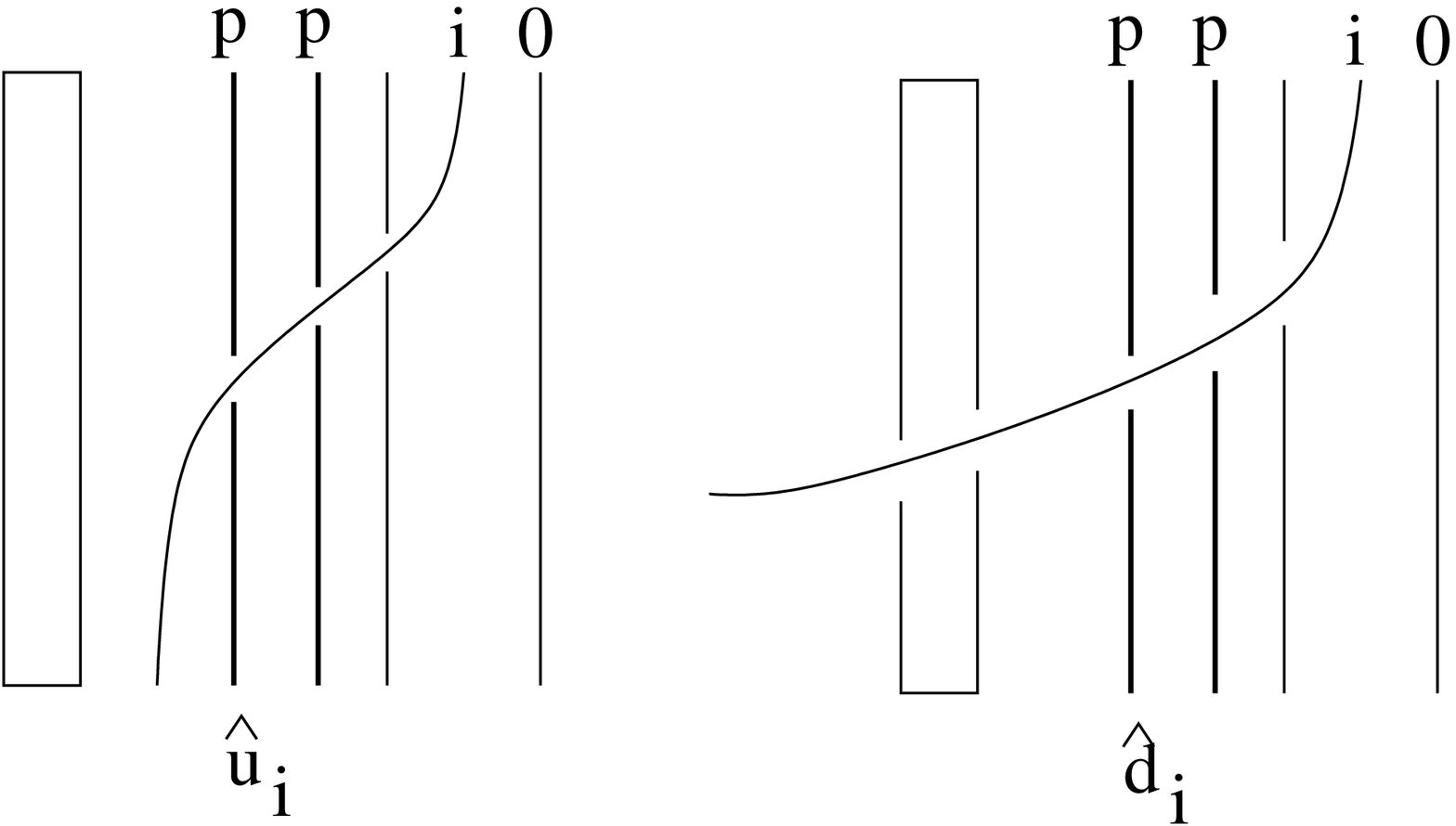,height=5.9cm}}
\centerline{Figure 10.1; The maps $\hat u_i$ and $\hat d_i$}

We check that the maps $\hat u_i$ satisfy:\\
(1) for $i<p$: $\hat d_i(y)= \hat d_{p+1}\hat u_i(y)$, where $y\in \hat s_p(C_{n-1})$.\\
(2) For $p> i_2 >i_1$:
$\hat d_{p+1}\hat u_{i_2-1}\hat u_{i_1}= \hat d_{i_1}\hat u_{i_2}$.\\
from this follows \\
(2') For $p> i_2 >i_1$:  $\hat d_{p}\hat u_{i_2-1}\hat u_{i_1} = \hat d_{i_2-1}\hat u_{i_1}$.\\
(2) and (2') are illustrated in Figure 10.2 and 10.3 below.
$$ (j) \mbox{ For } p>i_j > i_{j-1}>...>i_1\geq 0 \mbox{ one has }
\hat d_{p+1}\hat u_{i_j-j+1}...\hat u_{i_2-1}\hat u_{i_1} = \hat d_{i_1}\hat u_{i_j-j+2}...\hat u_{i_3-1}\hat u_{i_2}.$$
From this follows:\\
$$(j\geq k)\ \ \ \ \ \ \ \hat d_{p+1-k+1}\hat u_{i_j-j+1}...\hat u_{i_k-k+1}...\hat u_{i_2-1}\hat u_{i_1} = $$
 $$\hat d_{i_k-k+1}\hat u_{i_j-j+2}...\hat u_{i_{k+1}-(k+1)+2}\hat u_{i_{k-1}-(k-1)+1}...\hat u_{i_2-1}\hat u_{i_1}.$$

\centerline{\psfig{figure=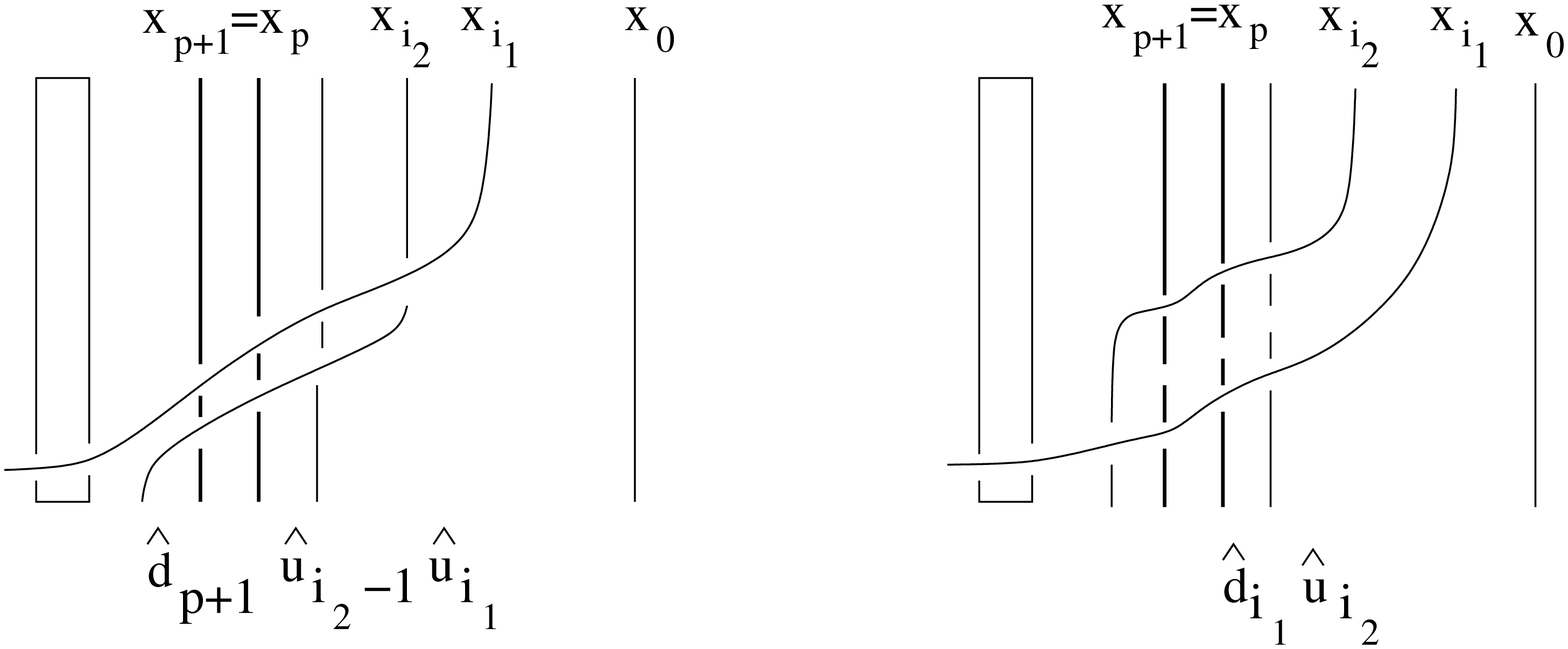,height=6.1cm}}
\centerline{Figure 10.2; Property (2): $\hat d_{p+1}\hat u_{i_2-1}\hat u_{i_1}= \hat d_{i_1}\hat u_{i_2}$ }
\ \\ \ \\
\centerline{\psfig{figure=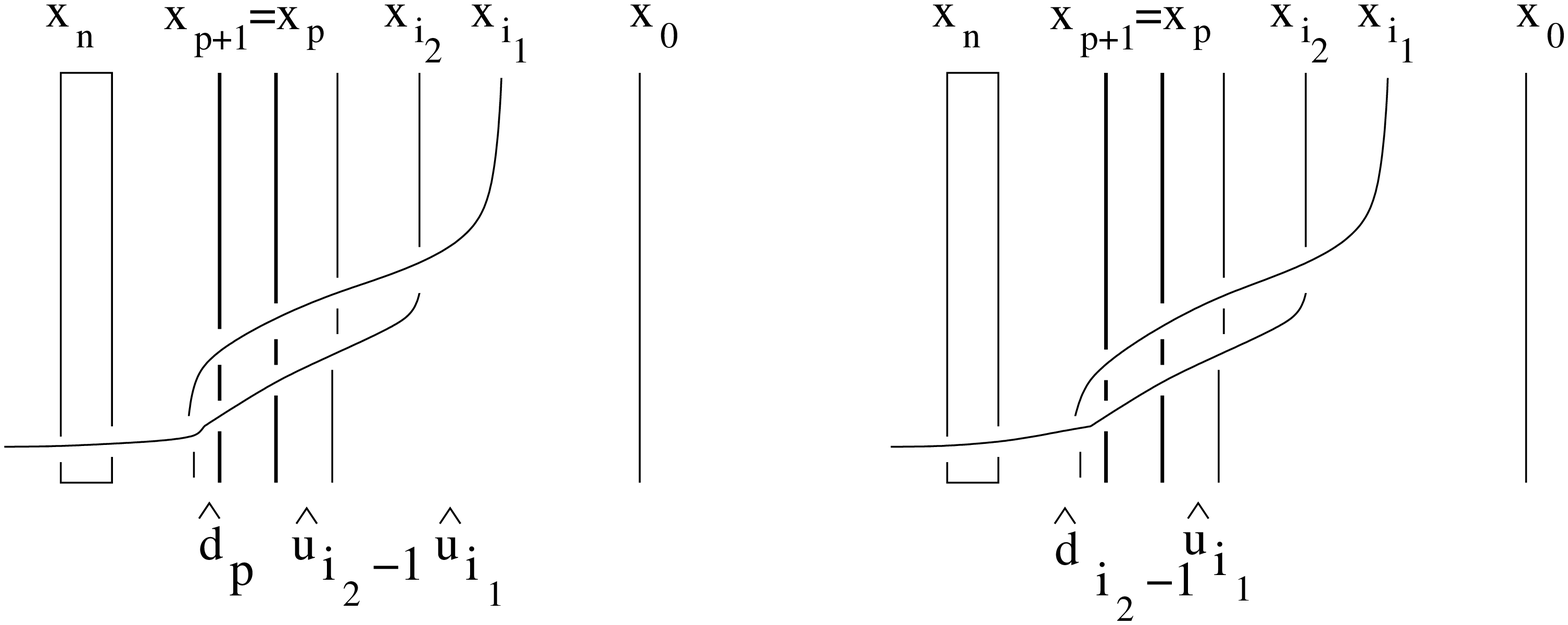,height=6.3cm}}
\centerline{Figure 10.3; Property (2): $\hat d_{p}\hat u_{i_2-1}\hat u_{i_1}= \hat d_{i_2-1}\hat u_{i_1}$ }

\begin{remark}\label{Remark 10.7}
We wrote formulas for $\hat d_s\hat u_{i_k-k+1}...\hat u_{i_1-1}\hat u_{i_1}$ only in the case of
$ p+1-k \leq s\leq p+1$ as
it is needed to compute the degenerate part of one term distributive homology of a spindle.
\end{remark}


\subsection{Weak simplicial modules with integration }
Here we formalize above equations to define {\it weak simplicial module} with integration
for which spectral sequence  stabilizes on $E^1_{p,q}$ and homology splits.

We consider a weak simplicial module $(C_n,d_i,s_i)$ with an additional structure,
$u_i: s_p(C_{n-1}) \to s_{p-1}(C_{n-1})$ for $i<p$, where maps $u_i$ satisfy condition (j) for any $j$.
This additional structure allows us to split  degenerate part.

\begin{definition}\label{Definition 10.8}
We say that  $(C_n,d_i,s_i,u_i)$ is a weak simplicial module with integration if
 $(C_n,d_i,s_i)$ is a weak simplicial module, $u_i: F^p_n \to F^{p-1}_n$ ($0\leq i<p$) and
the following hold:
\begin{enumerate}
\item[(1)] $d_{p+1}u_i=d_i$ ($i<p$).
\item[(j)] $d_{p+1}u_{i_j-j+1}...u_{i_2-1}u_{i_1} = d_{i_1}u_{i_j-j+2}...u_{i_3-1}u_{i_2}$,\\ where
$p> i_j >...>i_1\geq 0$.
\end{enumerate}
\end{definition}
A weak simplicial module with integration leads to bicomplex which stabilizes on $E^1_{p,q}$ and eventually
splits using the maps $f^p_n: F^p_n/F^{p-1}_n \to \bigoplus_{i=0}^{p}F^i_n/F^{i-1}_n$.

\section{Degeneracy for a very weak simplicial module}\label{Section 11}

We have considered, previously, the degenerate subcomplex in the case of a weak simplicial module, 
however if $(C_n,d_i,s_i)$ is only a very weak simplicial module, that is $d_is_i$ is not necessarily 
equal to $d_{i+1}s_i$, we can still construct the analogue of a degenerate subcomplex (and degenerate 
filtration (compare Remark. 3.4 in \cite{Prz-5}).

Let ${\mathcal C}=(C_n,d_i,s_i)$ be a very weak simplicial module, that is axioms (1)-(3) of 
Definition \ref{Definition 3.3} hold.
We do not necessarily have the condition $d_is_i-d_{i+1}s_i$ is equal to zero so it is of
interest to study an obstruction to zero: $t_i= d_is_i-d_{i+1}s_i$.
We have;
\begin{lemma}
Let $t_i:C_n \to C_n$ where $t_i= d_is_i-d_{i+1}s_i$ in a very weak simplicial module, then:
\begin{enumerate}
\item[(i)]
$$
d_it_j= \left\{ \begin{array}{rl}
 t_{j-1}d_i &\mbox{ if $i<j$} \\
  0 &\mbox{ if $i=j$ } \\
t_{j}d_{i} &\mbox{ if $i>j$}
       \end{array} \right.
$$
\item[(ii)] $t_it_j=t_jt_i$.
\item[(iii)] It follows from (i) that we have boundary preserving  filtrations:
$$ F_0^t= t_0(C_n) \subset F_1^t=span(t_0(C_n),t_1(C_n)) \subset...\subset F_n^t=span(t_0(C_n),...,t_n(C_n))=F^t$$

\item[(iii)] We have also boundary preserving  filtrations:
$$F_0^{tD}=span(t_0(C_n),s_0(C_{n-1})\subset...\subset$$
$$ F_{n-1}^{tD}C_n=
span(t_1(C_n),s_1(C_{n-1}),...,t_{n-1}(C_n),s_{n-1}(C_{n-1})
\subset $$
$$ F_{n}^{tD}C_n= span(t_1(C_n),s_1(C_{n-1}),...,t_{n-1}(C_n),s_{n-1}(C_{n-1}),t_n(C_n))=F^{tD}.$$
Or the filtration $$0 \subset (F_0^D+F^{t})/F^{t} \subset ...\subset (F_{n-1}^D+F^{t})/F^{t}= F^{tD}/F^{t}$$
\item[(iv)] $(\partial_nt_p- t_p\partial_n)(t_p(C_n)) \subset t_{p-1}(C_n)$. In particular, $t_p$ is a chain map 
on $F^t_p(C_n^t)/ F^t_{p-1}(C_n^t)$.
\end{enumerate}
\end{lemma}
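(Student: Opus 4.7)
The plan is to verify each clause by direct manipulation of the three axioms of a very weak simplicial module---the presimplicial identity $d_id_j=d_{j-1}d_i$ for $i<j$, the degeneracy identity $s_is_j=s_{j+1}s_i$ for $i\leq j$, and the mixed rule (3) for $d_is_j$---while keeping in mind that axiom (4') is \emph{not} assumed; indeed, $t_i=d_is_i-d_{i+1}s_i$ is precisely the obstruction to its holding. All identities in (i)--(iv) will follow from case analysis on the indices together with these three axioms, and (ii)--(iv) will be bootstrapped from (i).

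For (i), I split into $i<j$, $i=j$, $i>j$. When $i<j$, axiom (1) gives $d_id_j=d_{j-1}d_i$ and $d_id_{j+1}=d_jd_i$, while axiom (3) gives $d_is_j=s_{j-1}d_i$, so $d_it_j=(d_{j-1}s_{j-1}-d_js_{j-1})d_i=t_{j-1}d_i$. When $i=j$, axiom (1) in the form $d_jd_{j+1}=d_jd_j$ immediately gives $d_jt_j=0$. For $i>j+1$, a double application of axiom (1) rewrites $d_id_j=d_jd_{i+1}$ and $d_id_{j+1}=d_{j+1}d_{i+1}$, and then axiom (3) gives $d_{i+1}s_j=s_jd_i$, producing $d_it_j=t_jd_i$. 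The borderline case $i=j+1$ is the one subtlety: axiom (1) still rewrites $d_{j+1}d_j=d_jd_{j+2}$ and $d_{j+1}d_{j+1}=d_{j+1}d_{j+2}$, and axiom (3) applied to $d_{j+2}s_j=s_jd_{j+1}$ then recovers $d_{j+1}t_j=t_jd_{j+1}$.

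For (ii), I establish three companion identities by the same method: reading axiom (3) backwards gives $s_id_j=d_{j+1}s_i$ for $i<j$; combined with axiom (1) this yields $t_id_j=d_jt_i$ for $i<j$; and axioms (2) and (3) together give $t_is_j=s_jt_i$ for $i\leq j$. Assuming $i<j$, these combine via
\[
t_it_j=t_i(d_j-d_{j+1})s_j=(d_j-d_{j+1})t_is_j=(d_j-d_{j+1})s_jt_i=t_jt_i.
\]
The filtration statements in (iii) now follow by expanding $\partial_nt_j=\sum_i(-1)^id_it_j$ and applying (i): the $i<j$ summands land in $t_{j-1}(C_{n-1})\subset F^t_{j-1}$, the $i=j$ summand vanishes, and the $i>j$ summands stay in $t_j(C_{n-1})$. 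For the second filtration $F^{tD}$ one repeats the calculation of $\partial_ns_p$ from Subsection~3.2, but now the middle pair $(-1)^pd_ps_p+(-1)^{p+1}d_{p+1}s_p$ contributes exactly $(-1)^pt_p$ instead of zero, while the outer sums still lie in $\mathrm{span}(s_{p-1}(C_{n-2}),s_p(C_{n-2}))$ by axiom (3); hence $\partial_ns_p\in F^{tD}_p$.

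For (iv), I apply (i) twice. Writing $(\partial_nt_p-t_p\partial_n)(t_py)=\partial_nt_p^2(y)-t_p\partial_nt_p(y)$ and expanding each term as $\sum_i(-1)^id_i(\cdot)$, I use (i) once on $d_it_p$ inside $t_p\partial_nt_p$ and twice on $d_it_p^2$ inside $\partial_nt_p^2$. The contributions from $i>p$ both reduce to $t_p^2d_i(y)$ and cancel; the contributions from $i<p$ are $t_{p-1}^2d_i(y)$ on one side and $t_pt_{p-1}d_i(y)$ on the other, and invoking the commutativity (ii) $t_pt_{p-1}=t_{p-1}t_p$ assembles the difference as $\sum_{i<p}(-1)^it_{p-1}(t_{p-1}-t_p)d_i(y)$, which visibly lies in the image of $t_{p-1}$. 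The ``in particular'' follows because (ii) implies $t_p$ preserves every $F^t_j$ (since $t_pt_j=t_jt_p$), so the commutator $[\partial_n,t_p]$ sends all of $F^t_p$ into $F^t_{p-1}$ and $t_p$ descends to a chain map on $F^t_p/F^t_{p-1}$. The main obstacle throughout is the bookkeeping in (i): the borderline case $i=j+1$ cannot be handled by a single use of axiom (3) and must be routed through a double application of axiom (1), and this is essentially the same move that drives the cancellation of the $t_p^2$ cross-terms in (iv).
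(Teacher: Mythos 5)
Your proof is correct and follows essentially the same route as the paper's (direct index bookkeeping from axioms (1)--(3), the companion commutations $t_id_j=d_jt_i$ and $t_is_j=s_jt_i$, then expansion of $\partial_n t_p$ and $\partial_n s_p$); indeed it supplies the pieces the paper's own proof leaves unfinished, namely the $i>j$ case of (i), the verification for the $F^{tD}$ filtration, and part (iv). One small caveat: your parenthetical claim that $t_is_j=s_jt_i$ holds for all $i\le j$ overreaches, since the diagonal case $i=j$ would require $d_{j+1}s_{j+1}s_j=s_jd_{j+1}s_j$, which axioms (1)--(3) do not provide; but your argument only ever uses $i<j$, so nothing in the proof breaks.
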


$F^{tD}$ is likely the best proxy of degenerated subchain complex so I call it
the generalized degenerated subchain complex of a very weak simplicial module.
The quotient $C_n/F_{n}^{tD}C_n$ is an analogue of a normalized chain complex
(in quandle theory the quandle chain complex for any distributive structure, not necessarily spindle or quandle).

\begin{proof} (i) ($i<j$ case): we have here, $d_it_j= d_id_js_j - d_{i}d_{j+1}s_j= 
d_{j-1}d_is_j - d_jd_is_j = d_{j-1}s_{j-1}d_i - d_js_{j-1}d_i = t_{j-1}d_i$, \\
($i=j$ case)a we have, : $d_jt_j = d_jd_js_j - d_{j}d_{j+1}s_j= 0$, \\
 ($i>j$ case): we have, $d_it_j= d_id_js_j - d_{i}d_{j+1}s_j= $
DO\\
(ii)
First we show that:
$$\mbox{ (ii') } t_is_j=s_jt_i \mbox{ for } i<j  \mbox { we have, stressing which
property is used:}$$
$$t_is_j =  (d_is_i- d_{i+1}s_i)s_j = (d_i-d_{i+1})s_is_j \stackrel{(2)s_is_j=s_{j+1}s_i}{=} 
(d_i-d_{i+1})s_{j+1}s_i =$$ 
$$ d_is_{j+1}s_i - d_{i+1}s_{j+1}s_i  \stackrel{(3)}{=} s_jd_is_i - s_jd_{i+1}s_i =
s_jt_i.$$
(Similarly we prove that $t_is_j =s_jt_i$ for $i>j$. 
Now we complete the proof that $t_it_j=t_jt_i$. We have, assuming $i<j$:
$$t_it_j= t_i(d_js_j -d_{j+1}s_j) \stackrel{(i)}{=}  d_jt_is_j -d_{j+1}t_is_j  \stackrel{(ii')}{=} $$
$$=d_js_jt_i - d_{j+1}s_jt_i = t_jt_i \mbox{ as needed.}$$
(iii)
We start from computing $ \partial_n t_p$ stressing each time which 
property is used:
$$ \partial_n t_p = \sum_{i=1}^n (-1)^id_it_p = \sum_{i=0}^{p-1}(-1)^id_it_p + 0 + \sum_{i=p+1}^n(-1)^id_it_p=$$
$$\sum_{i=0}^{p-1}(-1)^it_{p-1}d_i  + \sum_{i=p+1}^n(-1)^it_pd_i$$
This is proving that filtration $F_p^n$ is boundary preserving and also that we deal with bicomplex.
Thus we can construct the spectral sequence from the bicomplex.

\end{proof}

Again if we work with a shelf and filtration from the right then it seems to stabilize at $E^1_{p,q}$
and homology splits, like in the case of right degenerate filtration of a spindle, $\hat F^p_n$.
See Figure 11.1 below for graphical interpretation of $\hat t_i$. $\hat u_i^t$ will be
defined  by analogy to $\hat u_i$.

\centerline{\psfig{figure=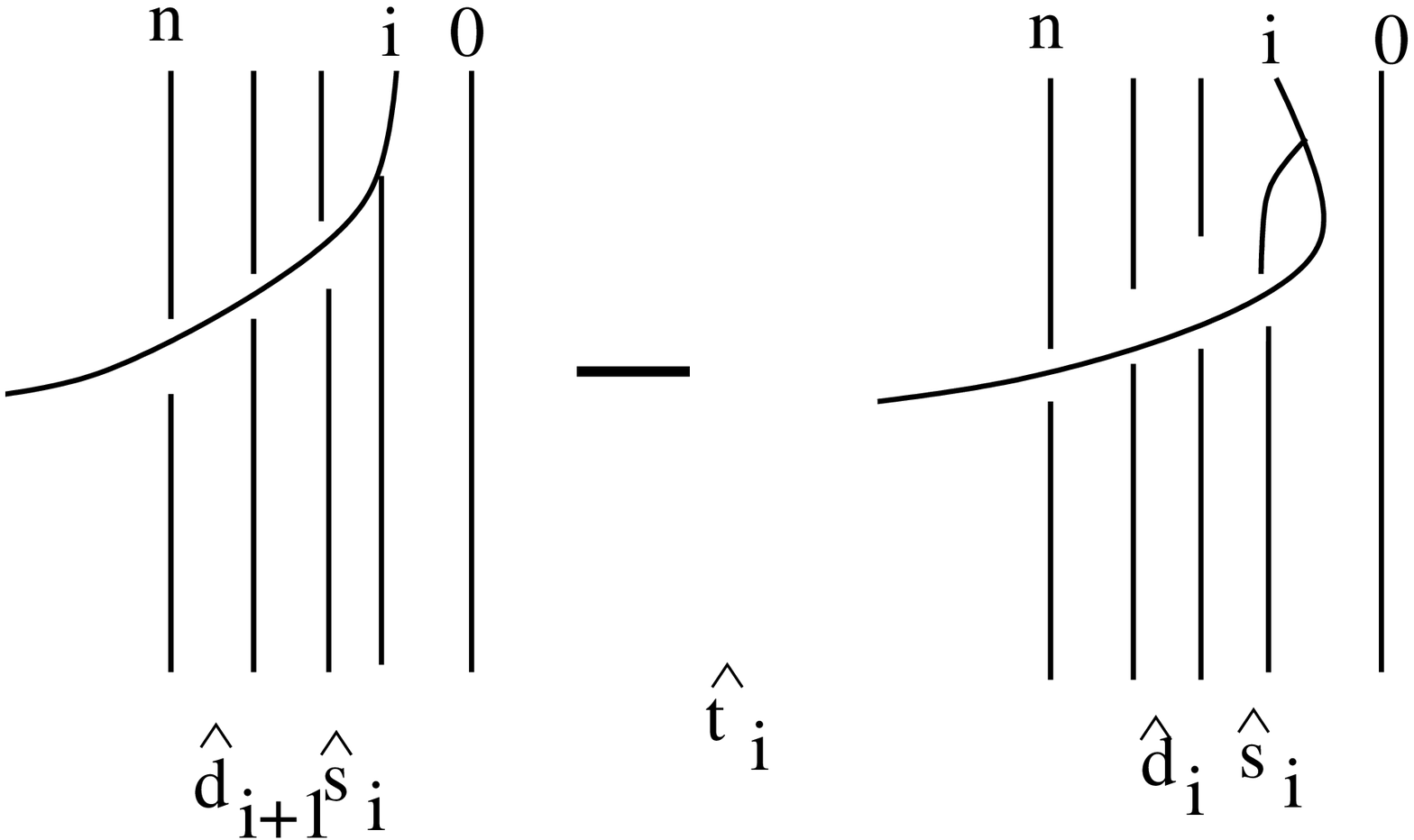,height=6.3cm}}
\centerline{Figure 11.1; The maps  $\hat t_i$ as the difference of $\hat d_{i+1}\hat s_i$ and $\hat d_{i}\hat s_i$}
\ \\

We leave to a reader the development of these ideas, specially in the case of multiterm distributive 
homology (compare \cite{Pr-Pu-2}).

\subsection{Introduction to t-simplicial objects}

We can extract properties of maps $t_i$ to obtain a new version of a simplicial object which
we will call $t$-simplicial object.
\begin{definition} Let ${\mathcal C}$ be a category and $(X_n,d_i,t_i)$ the sequence of objects $X_n$, $n\geq 0$,
and morphisms $d_i: X_n \to X_{n-1}$, $t_i: X_n \to X_{n}$, $0\leq i \leq n$ . We say that $(X_n,d_i,t_i)$ is
a $t$-simplicial object if the following four conditions hold (the first is the condition of a
presimplicial object):
\begin{enumerate}
\item[($1^t$)] $d_id_j=d_{j-1}d_i$ for $i<j$.
\item[($2^t$)] $t_it_j=t_jt_i$.
\item[($3^t$)] $$
d_it_j= \left\{ \begin{array}{rl}
 t_{j-1}d_i &\mbox{ if $i<j$} \\
t_{j}d_{i} &\mbox{ if $i>j$}
       \end{array} \right.
$$

\item[($4^t$)] $d_it_i=0$.
\end{enumerate}
\end{definition}

Let $(C_n,d_i,t_i)$ be a t-simplicial module, with $\partial_n=\sum_{i=0}^n(-1)^id_i$ then $(C_n,d_i,t_i)$
leads to a bicomplex $(C_{p,q},d^h,d^v)$ where $d^h$ and $d^v$ are defined up sign/shift by:
$$d^h= \sum_{i=0}^{p-1}(-1)^id_i \mbox{ and } d^v=\sum_{i=p+1}^n(-1)^id_i.$$

We also can define a $t$ analogue of a bisimplicial object, which we call a $t$-bisimplicial object,
$(C_{p,q},d_i^h,d_j^v)$, $0 \leq i \leq p$, $0\leq j \leq q$, with $d_i^h=d_i$ and $d^v_j=d_{p+1+i}$
(however some adjustment is needed to have $p+q=n$ (or just $p+q=n-1$).


\begin{problem}
The relation $d_it_i=0$ is crucial. Can one find for it general setting (say, $t_i$ as
the marker for horizontal and vertical parts of a bicomplex)? Other applications?
\end{problem}

\section{From distributive homology to Yang-Baxter homology}\label{Section 12}

We can extend the basic construction from the introduction, still using very naive point of view, as follows:
Fix a finite set $X$ and color semi-arcs of $D$ (parts of $D$ from a crossing to a crossing) by elements of $X$
 allowing different weights from 
some ring $k$ for every crossing (following statistical mechanics terminology we call these weights Boltzmann weights). 
We allow also differentiating between a negative and a positive crossing; see Figure 12.1. \\ \ \\
\centerline{\psfig{figure=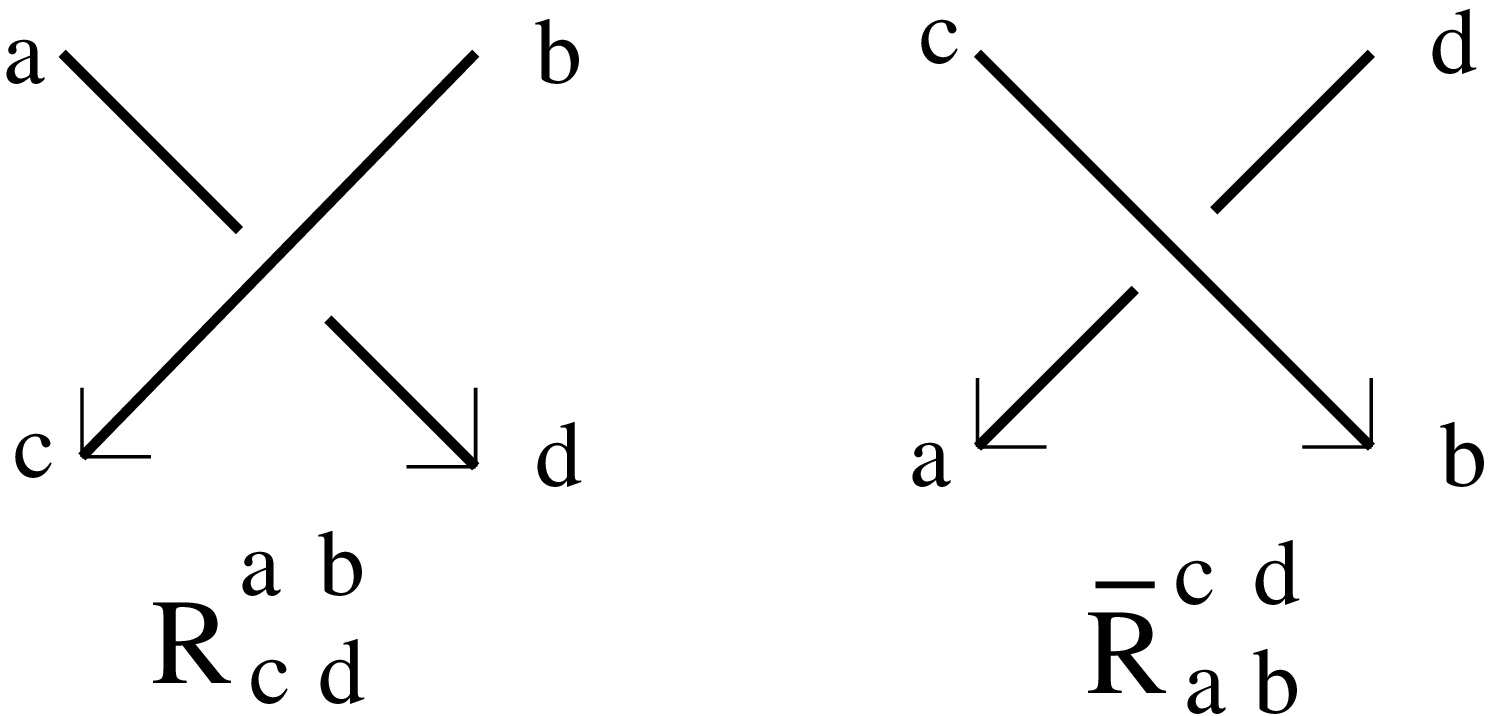,height=3.3cm}}\ \\ \ \\
\centerline{Figure 12.1; Boltzmann weights $R^{a,b}_{c,d}$ and $\bar R_{a,b}^{c,d}$ for positive and 
negative crossings}
\ \\ 
We can now generalize the number of colorings to state sum (basic notion of statistical physics) 
by multiplying Boltzmann weight over all crossings and adding over all colorings:
$$col_{(X;BW)}(X)= \sum_{\phi\in col_X(D)}\prod_{p\in \{crossings\}}\hat R^{a,b}_{c,d}(p)$$
where $\hat R^{a,b}_{c,d} $ is $R^{a,b}_{c,d}$ or $\bar R^{a,b}_{c,d}$ depending on 
whether $p$ is a positive or negative crossing (see \cite{Jones}). 
Our state sum is an invariant of a diagram but to get a link invariant we should test it on Reidemeister moves.
To get analogue of a shelf invariant we start from the third Reidemeister move with all positive crossings. 
Here we notice that, in analogy to distributivity, where passing through a positive crossing was 
coded by a map $R:X\times X \to X\times X$ with $R(a,b)=(b,a*b)$. Thus in the general case 
passing through a positive crossing is coded by a linear map $R:kX\otimes kX \to kX\otimes kX$ 
and in basis $X$ the map $R$ is given by the $|X|^2 \times |X|^2$  matrix with entries $(R^{a,b}_{c,d})$.
The third Reidemeister move leads to the equality of the following maps 
$V\otimes V \otimes V \to V\otimes V \otimes V$ where $V=kX$:
$$ (R\otimes Id)(Id\otimes R)(R\otimes Id)= (Id\otimes R)(R\otimes Id)Id\otimes R)$$
This is called the Yang-Baxter equation and $R$ is called a pre-Yang-Baxter operator.
If $R$ is additionally invertible it is called a Yang-Baxter operator. If entries of $R^{-1}$ are 
equal to $\bar R^{a,b}_{c,d}$ then the state sum is invariant under ``parallel" (directly oriented)
 second Reidemeister move.\footnote{We should stress that to find link invariants it suffices to use 
directly oriented second and third Reidemeister moves in addition to both first Reidemeister moves,
as we can restrict ourselves to braids and use the Markov theorem. This point of view was used in \cite{Tur}.}
 
For a given pre-Yang-Baxter operator we attempt to find presimplicial module, from which homology 
will be derived. 

Figure 12.2 below illustrate various graphical interpretation of the generating morphism $d_i$ of the  
presimplicial category $\Delta_{pre}^{op}$. They are related to homology of a set-theoretic 
Yang-Baxter equation of Carter-Kamada-Saito \cite{CES-2} and Fenn \cite{FIKM}, 
and to homology of Yang-Baxter equation of Eisermann 
\cite{Eis-1,Eis-2}. We should also acknowledge stimulating observations by Ivan Dynnikov.\\ \ \\
\centerline{\psfig{figure=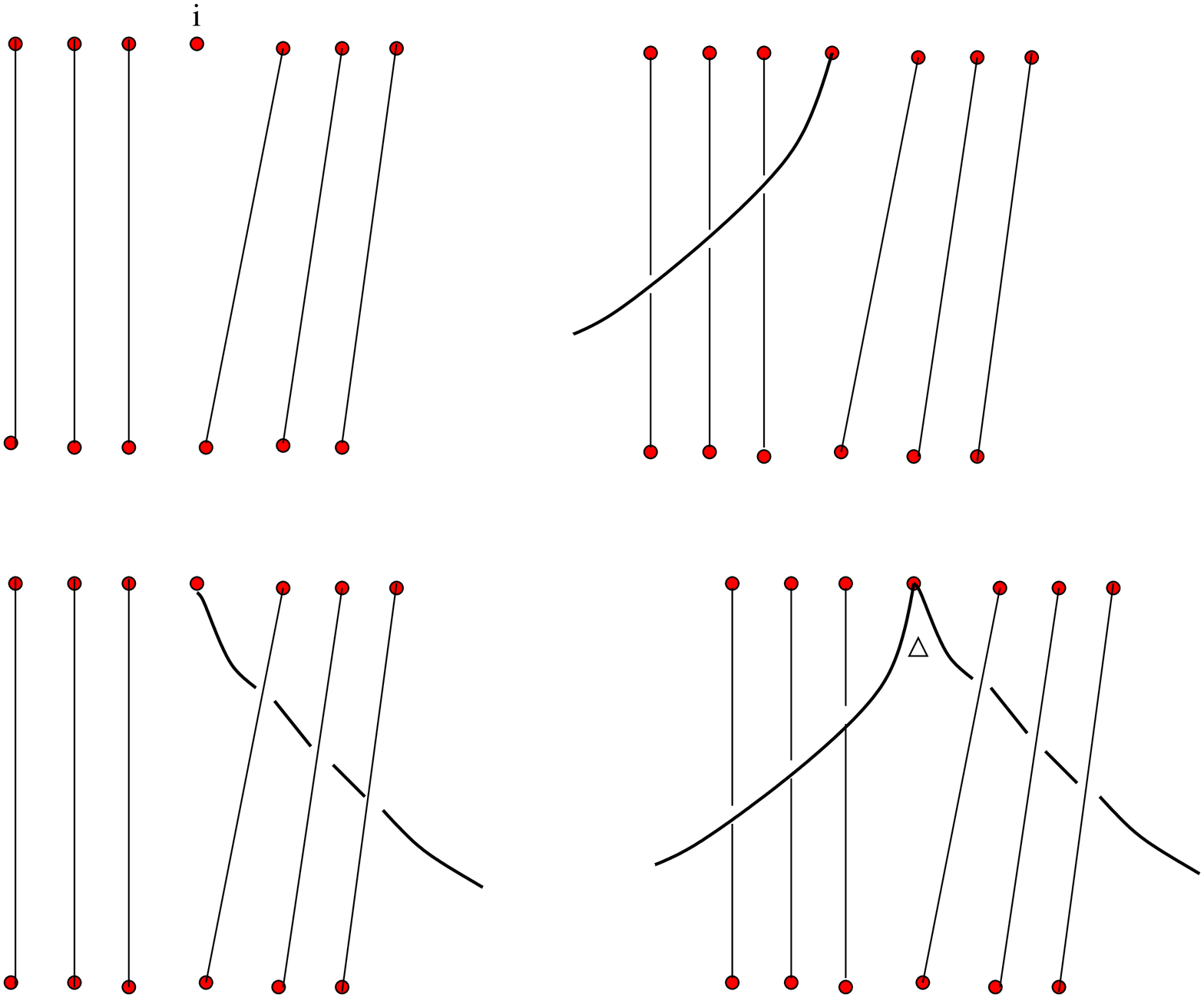,height=6.3cm}}
\centerline{Figure 12.2; Various interpretation of the graphical face map $d_i$}

\subsection{Graphical visualization of Yang-Baxter face maps}
The presimplicial set corresponding to (two term) Yang-Baxter homology has the following 
visualization. In the case  of a set-theoretic Yang-Baxter equation we recover the homology of 
\cite{CES-2}.
\ \\
\centerline{\psfig{figure=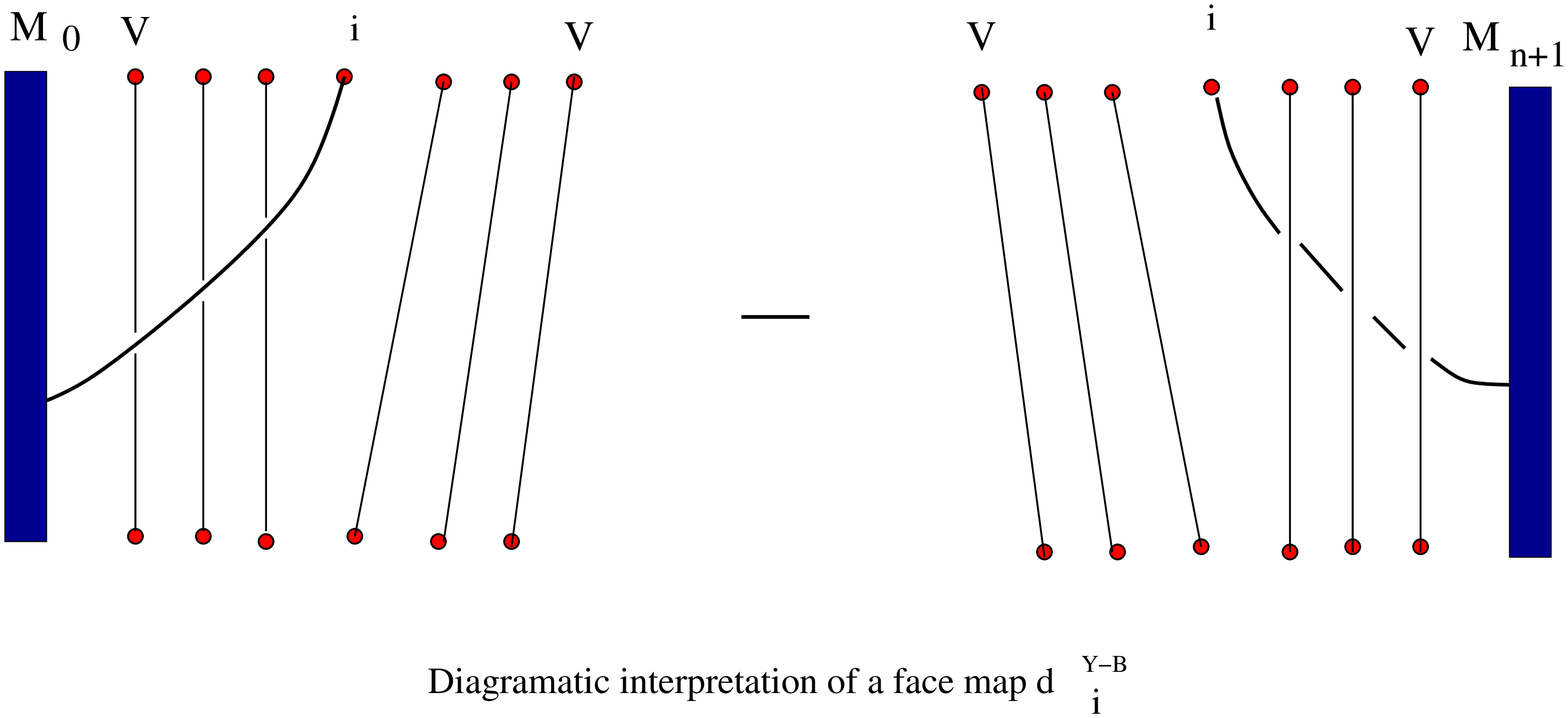,height=6.3cm}}
\centerline{Figure 12.3; Graphical interpretation of the face map $d_i$}
\ \\ \ \\
Our graphical model allows easy calculation:
\begin{example}\label{Example 5.1}
 Assume $R:X\times X \to X\times X$ generates set-theoretic Yang-Baxter operator 
with $R(x,y)= (R_1(x,y),R_2(x,y))$. Then
$$\partial^{YB}(x_1,x_2,x_3,x_4)=\partial^{\ell} -\partial^{r}  $$
$$ \partial^{\ell}(x_1,x_2,x_3,x_4)= ((x_2,x_3,x_4) - (R_2(x_1,x_2),x_3,x_4)  +$$
$$  (R_2(x_1,R_1(x_2,x_3),R_2(x_2,x_3),x_4)
- (R_2(x_1,R_1(x_2,R_1(x_3,x_4), R_2(x_2,R_1(x_3,x_4),R_2(x_3,x_4))$$
$$ \partial^{r}(x_1,x_2,x_3,x_4)= (R_1(x_1,x_2),R_1(R_2(x_1,x_2),x_3),R_1(R_2(R_2(x_1,x_2),x_3),x_4) - $$
$$(x_1,R_1(x_2,x_3),R_1(R_2(x_2,x_3),x_4) + (x_1,x_2,R_1(x_3,x_4)) - (x_1,x_2,x_3).$$
We have generally  for any $n$: $$ \partial^{\ell}_n=\sum_{i=1}^n(-1)^{i-1} d_i^{\ell} 
\mbox{ with }$$
$$d_i^{\ell}(x_1,...,x_n)=$$ 
$$(R_2(x_1,R_1(x_2,R_1(x_3,...,R_1(x_{i-1},x_i)))),...,R_1(x_{i-1},x_i)),x_{i+1},...,x_n).$$ 
Similarly we have, directly from Figure 12.3, for any $n$: $$ \partial^{r}_n=\sum_{i=1}^n(-1)^{i-1} d_i^{r} \mbox{ with }$$
$$d_i^{r}(x_1,...,x_n)=$$
$$(x_1,...,x_{i-1},R_1(x_i,x_{i+1}),...,R_1(R_2(R_2(...(R_2(x_i,x_{i+1}),x_{i+2}),...,x_{n-1})x_n)))).$$

\end{example}


\section{Geometric realization of simplicial and cubic sets}\label{Section 13}

A simplicial (or presimplicial) set (or space, more generally) can be a treated as an instruction of how
 to glue a topological 
space from pieces (simplexes in the most natural case). the result is CW complex or more precisely a $\Delta-$complex 
in the terminology used in \cite{Hat}. That is an object in $X_n$ is a name/label for 
a an $n$ dimensional simplex, and maps $d_i$ (and $s_i$ in a weak simplicial case) 
are giving glueing instruction. Precise description
(following \cite{Lod-1}) is given below. The similar construction for a cubic (or pre-cubic) set is described at 
the end of the section\footnote{The definition comes under the general scheme of a {\it co-end}, p. 371 of \cite{BHS}.}. 
We speculate also what should be a natural generalization of (pre)simplicial and 
(pre)cubic categories.

Let $\mathcal X$ be a simplicial space (e.g. simplicial set with discrete topology),
and $\mathcal Y$ a cosimplicial space (e.g. $\blacktriangle $). We define their product over $\Delta$
similar to the tensor product as follows:
$$ {\mathcal X} \times_{\Delta} {\mathcal Y} = \bigcup_{n\geq 0} (X_n \times Y_n)/\sim_{rel}$$
where $\sim_{rel}$ is an equivalence relation generated by $(x,f_Y(y)) \sim_{rel} (f_X(x),y)$,
$f:[m]\to [n]$, $f_X$ is the image of $f$ under contravariant functor $\Delta \to {\mathcal X}$ and
$f_Y$ is the image of $f$ under the covariant functor $\Delta \to {\mathcal Y}$.
\begin{definition}\label{Definition 13.1}
The geometric realization of a simplicial space $\mathcal X$ is, by definition, the space
$$|{\mathcal X}|= {\mathcal X} \times_{\Delta} {\blacktriangle} = \bigcup_{n\geq 0} (X_n \times \Delta^n)/\sim_{rel}$$
We restrict our topological spaces in order to have $|{\mathcal X}\times {\mathcal Z}|= |{\mathcal X}|\times |{\mathcal Z}|$.

We can perform our construction also for a presimplicial set (or space). The gluing maps are then limited to 
that induced by $d_i$.

If ${\mathcal X}$ is a simplicial set we may consider only nondegenerate elements in $X_n$ (that is elements
which are not images under degeneracy maps) and build $|{\mathcal X}|$ as a CW complex: We start from the union
of $n$-cells $\Delta^n$ indexed by nondegenerate elements in $X_n$; the face operations tells us how
these cells are glued together to form $|{\mathcal X}|$.
\end{definition}

\begin{example}\label{Example 13.2}
 Let ${\mathcal X}$ be an abstract simplicial complex $X=(V,P)$. If we order its vertices then
${\mathcal X}$ is a presimplicial set with $X_n$ being the set of $n$ simplexes
of ${\mathcal X}$ and face are defined on each simplex in a standard way $d_i(x_0,...,x_n)=
(x_0,...,x_{i-1},x_{i+1},...,x_n$
A copresimplicial space is here the category $\blacktriangle $ with objects $Y_n= \Delta^n=\{(y_0,...,y_n)\in \R^{n+1}\ | \ 
\sum_{i=0}^ny_i=1, y_i\geq 0 \}$, one object for any $n\geq 0$. The coface maps $d^i:Y_n\to Y_{n+1}$ are given by 
$d^i(y_0,..,y_n)=(y_0,...,y_{i-1},0,y_i,...,y_n)$ (of course $d^jd^i=d^id^{j-1}$ for $i<j$).
Then the topological realization $|{\mathcal X}|$ is a standard geometric simplicial complex associated to ${\mathcal X}$,
that is 
$$|X|= \bigcup_{n\geq 0} (X_n \times \Delta^n)/(x,d^i(y)=(d_i(x),y) \mbox{ for $x\in X_n$ and $y\in \Delta^{n-1}$,}$$ 
$X_n$ with discrete topology and $|X|$ with quotient topology.
\end{example}

\subsection{Geometric realization of a (pre)cubic set}\label{Subsection 13.1}
First we define a precubic category $\Box_{pre}$.
\begin{definition} the precubic category $\Box_{pre}$ has as objects non-negative integers
$[n]$ interpreted as $n$ points ($[0]$ has the empty object), see Figure 13.1. 
Thus the objects are the same as in presimplicial 
$\Delta_{pre}$ category except the grading shift (now $[n]$ is in grading $n$).
\end{definition}
\ \ \\
\centerline{\psfig{figure=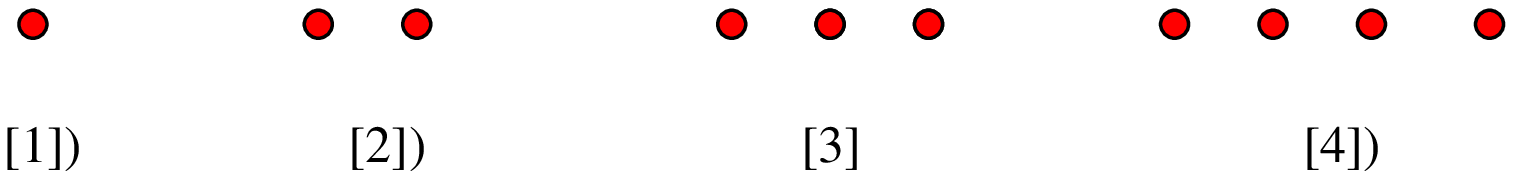,height=1.5cm}} \ \\
\centerline{Figure 13.1;  interpreting objects $[1],[2],[3]$ and $[4]$ in $\Box_{pre}$ category }
\ \ \\

Morphisms are strictly increasing maps that is $f\in Mor([m],[n])$ if 
$f:(1,2,...,m) \to (1,2,...,n)$ and $i\j$ implies $f(i)<f(j)$ with an additional data that points which are not in 
the image of $f$ have $0$ or $1$ associated to them. Morphisms are generated by maps $d^i_{\epsilon}(1,2,...,n)=
(1,...,i-1,i+1,...,n)$ and the point $i$ has marker $\epsilon$ (on the picture marker $0$ is denoted by $\leftarrow$) 
(here $1\leq i \leq n+1$, $\epsilon = 0$ or $1$); see Figure 13.2. \\ \
The presimplicial category $\Delta_{pre}$ is the quotient of the pre-cubic category $\Box_{pre}$,
but this functor is not that interesting in applications.
The proper functor is related with triangulation of a cube \cite{Cla}. \\ \
\centerline{\psfig{figure=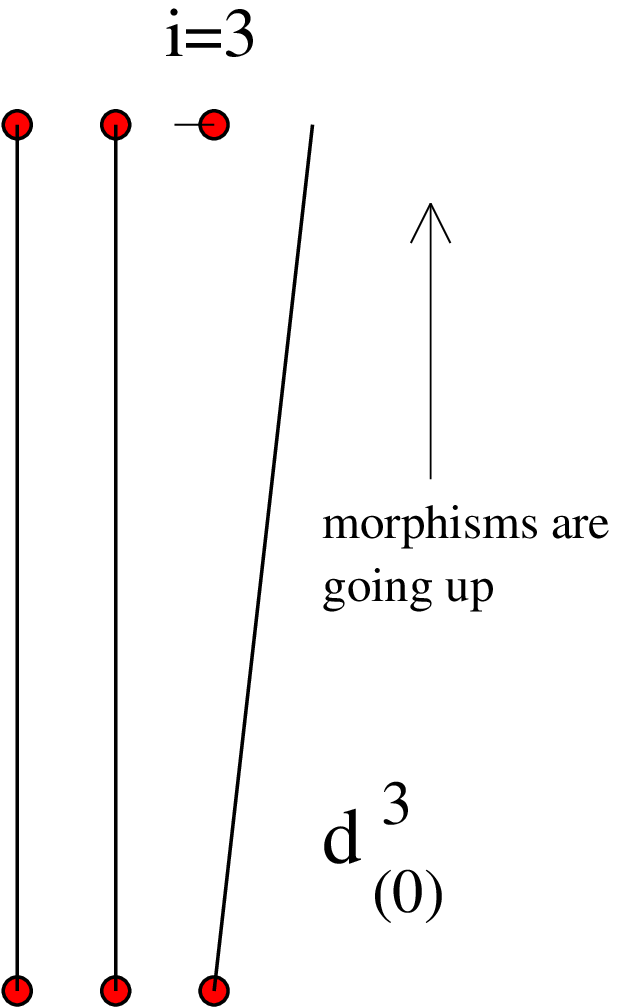,height=4.8cm}} \ \\
\centerline{Figure 13.2;  morphism (co-face map) $d^3_{(0)}$ from $[3]$ to $[4]$ }
\ \ \\
\begin{definition}
\begin{enumerate}
\item[(i)] A pre-cubic category is a contravariant functor from a pre-cubic category $\Box_{pre}$ to 
a given category, ${\mathcal C}$, or, equivalently, a covariant functor ${\mathcal F}: \Box_{pre}^{op} \to  
{\mathcal C}$. We can also say that a pre-cubic category is a sequence of objects $X_n$ in $Ob({\mathcal C})$,
and morphisms $d_i^{\epsilon}: X_n \to X_{n-1}$ satisfying $d_i^{\alpha}d_j^{\beta}=
d_{j-1}^{\beta}d_i^{\alpha}$ for $i<j$. The category $\Box_{pre}^{op}$ is visualized by the same diagrams 
as the category $\Box_{pre}$ except morphisms will be read from the top to the bottom.
\item[(ii)] A co-pre-cubic category is a covariant functor from a pre-cubic category $\Box_{pre}$ to 
a given category, ${\mathcal C}$. A basic example of a co-pre-cubic space (${\mathcal C}= TOP$) is 
given by choosing $X^{n}= I^n=\{(x_1,...,x_n)\in \R^n\ | \ 0\leq x_i \leq 1 \}$ and morphisms:
$d^i_{\epsilon}(x_1,..,x_n)= (x_1,...,x_{i-1},\epsilon,x_{i+1},...,x_n)$. Let us denote this co-pre-cubic 
space by $\blacksquare_{pre}$.
\end{enumerate}
\end{definition}

The basic co-pre-cubic space, $\blacksquare_{pre}$, can be enriched by degenerate (projection) 
 morphisms $s^i: [n]\to [n-1]$ for $1\leq i \leq n$,
given by $s^i(x_1,...,x_n)= (x_1,...,x_{i-1},x_{i+1},...,x_n)$. We denote the new category by $\blacksquare$.
Then the morphisms $d^i_{\epsilon}$ and $s^i$ (with the domain $X_n=I^n$) satisfy:
\begin{enumerate}
\item[(1)] $d^j_{\beta}d^i_{\alpha}= d^i_{\alpha}d^{j-1}_{\beta} \mbox{ for } i < j$,
\item[(2)] $s^js^i= s^is^{j+1}\mbox{ for } i \leq j$,
\item[(3)] $ s^jd^i_{\epsilon}= \left\{ \begin{array}{rl}
 d^i_{\epsilon}s^{j-1} &\mbox{ if $i<j$} \\
d^{i-1}_{\epsilon}s^j &\mbox{ if $i>j$}
       \end{array} \right.
$
\item[(4)] $ s^id^i_{\epsilon}= Id_{I^n}. $
\end{enumerate}

The axioms of the cubic category are modeled on axioms (1)-(4) more precisely:

\begin{definition}\label{Definition 13.5}
\begin{enumerate}
\item[(i)] The cubic category $\Box$  is composed of objects $[n]$, face maps $d^i_{\epsilon}$ (as in pre-cubic category 
$\Box_{pre}$) and degeneracy maps $s_i: [n-1]\to [n]$ ($1\leq i \leq n$ and relations between morphisms 
in the category are given by axioms (1)-(4).
\item[(ii)]  The cubic category is a functor ${\mathcal F}: \Box^{op} \to {\mathcal C}$. 
The classical example (of a cubic space) is giving by an approach to singular cubic homology. 
Here, for a topological space $T$, ${\mathcal F}([n])$ is the set of all continuous maps $f:I^n \to T$.
$d_i^{\epsilon}: {\mathcal F}([n]) \to {\mathcal F}([n-1])$ is given by 
$d_i^{\epsilon}(f)= fd^i_{\epsilon}: I^{n-1} \stackrel{d^i_{\epsilon}}{\to} I^n \stackrel{f}{\to}  T$.
 
\item[(iii)] The co-cubic category is a functor ${\mathcal F}: \Box \to {\mathcal C}$. The classical example
$(I^n,d^i_{\epsilon},s^i)$ was described before as a motivation for a cubic category, $\blacksquare$.
This example is used as a building block of a geometric realization of a pre-cubic and cubic set (Definitions 
\ref{Definition 13.6}, \ref{Definition 13.8}). 
\end{enumerate}
\end{definition}

As we observed already, the pre-cubic category leads to two presimplicial 
categories $(X_n,d_i^{(0)})$ and $(X_n,d_i^{1)}$
(with shifted grading). Conversely, if we have two presimplicial categories so that $d_i^{\alpha}d_j^{\beta}=
d_{j-1}^{\beta}d_i^{\alpha}$ then we combine them to in pre-cubic category (with grade shift).

With degenerate maps situation is not that clear as a condition (3)-(4) of a simplicial category
 only partially agree with the analogous conditions of a cubic category.

The geometrical realization of a cubical and precubical set (or space) is analogous to that for 
simplicial or presimplicial sets (space). We write below a formal definition only in the case of 
a pre-cubic set (space) as degeneracies of a cubic set are not necessarily the one used in 
knot theory.

\begin{definition}\label{Definition 13.6}
The geometric realization of a precubic set is a CW complex defined as follows (notice that 
$X_n$ is indexing cubes and pre-cubic structure gives an instruction how to glue the cubes together):
$$|{\mathcal X}|= {\mathcal X} \times_{\Box_{pre}} {\blacksquare} = \bigcup_{n\geq 0} (X_n \times I^n)/\sim_{rel}$$
where $\sim_{rel}$ is an equivalence relation generated by $(x,d^i_{\epsilon}(y)) \sim_{rel} (d_i^{\epsilon}(x),y)$,
and, as before $d^i_{\epsilon}: I^{n-1}\to I^n$ and $d_i^{\epsilon}: X_i\to X_{i-1}$, $x\in X_n$, and $y\in I^{n-1}$.
\end{definition}

More generally:
\begin{definition}\label{Definition 13.7}
Let $\mathcal X$ be a pre-cubic space (e.g. pre-cubic set with discrete topology),
and $\mathcal Y$ a co-pre-cubic space (e.g. $\blacksquare_{pre} $).
The we define 
$${\mathcal X} \times_{\Box_{pre}} {\mathcal Y}= \bigcup_{n\geq 0} (X_n \times Y_n)/\sim_{rel}.$$
where $\sim_{rel}$ is an equivalence relation generated by $(x,d^i_{\epsilon}(y)) \sim_{rel} (d_i^{\epsilon}(x),y)$.
\end{definition}

If $(X_n,d^{\epsilon}_i,s_i)$ is a cubic set (or space) we define its geometric realization similarly to 
Definitions \ref{Definition 13.6} and \ref{Definition 13.7}, but including also $s_i,s^i$ as gluing morphisms 
(effectively dividing by degenerate part, which is not necessary acyclic). 

\begin{definition}\label{Definition 13.8} 
Let $\mathcal X$ be a cubic set and $\mathcal Y$ the co-cubic space $(I^n,d_{\epsilon}^i,s^i)$ then we define 
the geometric realization $|\mathcal X|$ of $\mathcal X$ as
$$|{\mathcal X}|= {\mathcal X} \times_{\Box} {\blacksquare} = \bigcup_{n\geq 0} (X_n \times I^n)/\sim_{rel}$$
where $\sim_{rel}$ is an equivalence relation generated by $(x,d^i_{\epsilon}(y)) \sim_{rel} (d_i^{\epsilon}(x),y)$,
and analogously with $s_i$ and $s^i$.
\end{definition}

\section{Higher dimensional knot theory $M^n \to \R^{n+2}$ }\label{Section 14}

Many ideas described in this paper can be applied to higher dimensional knot theory,
were we study embeddings of $n$ dimensional (mostly orientable) manifolds in $\R^{n+2}$.
For this we direct readers to \cite{CKS} and \cite{Pr-Ro-2}.

\section{Acknowledgements}
J.~H.~Przytycki was partially supported by the  NSA-AMS 091111 grant,
by the GWU-REF grant, and Simons Collaboration Grant-316446.

\ \\
Department of Mathematics,\\
The George Washington University,\\
Washington, DC 20052\\
e-mail: {\tt przytyck@gwu.edu},\\
University of Maryland College Park,\\
and University of Gda\'nsk, Poland

\end{document}